\newcommand{\hb}{\hfill $\Box$}
\begin{document}

\title { The sum of root-leaf distance interdiction problem with cardinality constraint by upgrading edges on trees
}

\titlerunning{The sum of root-leaf distance interdiction problem}  

\author{Xiao Li        \and
        Xiucui Guan \and Qiao Zhang \and Xinyi Yin \and Panos M. Pardalos
}


\institute{X. Li \and X.C. Guan \and X.Y. Yin \at School of Mathematics, Southeast University, Nanjing
		210096, Jiangsu, China\\ \email{xcguan@163.com} (Corresponding author: Xiucui Guan)
  \and Q. Zhang   \at 
  Aliyun School of Big Data, Changzhou University, Changzhou 213164, Jiangsu, China
		\and P.M. Pardalos \at
		Center for Applied Optimization,
		Department of Industrial and Systems Engineering, University of Florida, Gainesville, FL, USA		
	}

\date{Received: date / Accepted: date}

\maketitle

\begin{abstract}
A network for the transportation of supplies can be described as a rooted tree with a
 weight of a degree of congestion for each edge. We take the sum of root-leaf distance (SRD) on a rooted tree as the whole degree of congestion of the tree. Hence, we 
 consider the SRD interdiction problem on trees with cardinality constraint by upgrading edges  (denoted by (\textbf{SDIPTC}) in brief). 
 It aims to maximize  the SRD by upgrading 
 the weights of $N$ critical edges such that the total upgrade cost
 under some measurement is upper-bounded by a given value. 
 The relevant minimum cost problem(\textbf{MCSDIPTC}) aims to minimize
 the total upgrade cost on the premise that the SRD is lower-bounded by a given value.  
 We develop two different norms including weighted $l_\infty$ norm and weighted bottleneck Hamming distance to measure the upgrade cost. 
 We propose two binary search algorithms within O($n\log n$) time for the problems (\textbf{SDIPTC}) under the two norms, respectively. 
 For problems (\textbf{MCSDIPTC}),we propose two binary search algorithms within O($N n^2$) and O($n \log n$) 
 under weighted $l_\infty$ norm and weighted bottleneck Hamming distance, respectively. These problems are solved through their subproblems (\textbf{SDIPT}) and (\textbf{MCSDIPT}), in which we ignore the cardinality constraint on the number of upgraded edges.
 Finally, we design numerical experiments to show the effectiveness of these algorithms.
\keywords{Network interdiction problems \and Tree \and Greedy algorithm \and Binary search method \and $l_\infty$ norm \and Bottleneck Hamming distance}
\subclass{90C27 \and 90C35 }
\end{abstract}

\section{Introduction}
\label{intro}
Terrorism is a persistent challenge for many nations worldwide, and it is imperative that governments take appropriate measures to prevent and mitigate the impact of terrorist attacks. Among the diverse tactics used by terrorists, actions that trigger fires and hinder the aid process are recognized as Network Interdiction Problems (NIPs), which are based on game theory.

An NIP involves two main actors, a leader and a follower, who have competing objectives. The follower aims to optimize their objectives by using a network to facilitate the movement of resources such as supply convoys or maximizing the amount of material transported through the network. Conversely, the leader seeks to impede the follower's objectives by disrupting the network's structure through interdicting arcs or nodes. The leader can do so by attacking nodes to reduce their capacity or destroy them, ultimately 
hindering the follower's ability to move through the network \cite{ref1}.

Some transportation networks can be represented as rooted trees with weighted edges, where the root node is the critical infrastructure or facility that the leader intends to attack, the internal nodes are served as transportation hubs, and leaf nodes are emergency agencies including fire stations, police stations, hospitals etc.  The edge weight indicates the degree of congestion of the roads, and targeting these edges can disrupt the network by impeding rescue. However, decision-makers with limited resources must identify the most strategic edges to attack in order to maximize the network's overall performance degradation.

In a broader sense, tree networks can be used to describe networks with one-directional links. For instance, during wartime, ammunition supplies may only flow from logistics centers to the front lines. The related issues can be formulated as follows.

Let $T=(V,E,w,u,c)$ be an edge-weighted tree rooted at $s$,
where $V:=\left \{ s,v_1,v_2,\dots,v_n \right \} $ and
$E:=\left \{ e_1,e_2,\dots,e_n \right \}$
are the sets of nodes and edges, respectively.
Let $Y:=\left \{ t_1,t_2,\dots,t_r \right \}$ be the set of leaf nodes.
Let $w(e)$ and $u(e)$ be the original weight and upper bound of 
edge $e\in E$, respectively,
where $u(e)\geq w(e) \geq 0$.
Let $c(e)>0$ be the unit modification cost of edge $e\in E$. Let $P_k:=P_{t_k}:=P_{s,t_k}$ be the root-leaf path from the root node $s$ to the leaf node $t_k$.
Let $w(P_k):=\sum_{e\in P_{s,t_k}}w(e)$ and $w(T):=\sum_{t_k\in Y}w(P_k)$ be the weight of the path $P_k$ and the sum of root-leaf distance (SRD)
 under the edge weight $w$, respectively.
The sum of root-leaf distance  interdiction problem on trees with 
cardinality constraint (\textbf{SDIPTC}) 
aims to maximize the SRD by determining an edge weight vector $\tilde{w}$ such that the modification cost $\|\tilde{w}-w\|$ in a certain norm and the number of updated edges do not exceed two given values $K$ and $N$, respectively.
Its mathematical model can be stated as follows.
\begin{eqnarray}
	&\max \limits_{\tilde{w}}& \tilde{w}(T):=\sum_{t_k \in Y}\tilde{w}(P_k)\nonumber\\
	\textbf { (SDIPTC)} & s.t. &\ \| \tilde{w}-w\|  \leq K,\nonumber\\ 
	&&\sum_{e\in E}H(\tilde{w}(e),w(e))\le N, \label{SDIPTC}\\ 
	&& w(e) \leq \tilde{w}(e) \leq u(e), \quad e \in E,\nonumber
\end{eqnarray}
where $H(\tilde{w}(e),w(e))=\begin{cases}
	0, &{\tilde{w}}(e)=w(e)\\
	1,&{\tilde{w}}(e)\ne w(e)\\
\end{cases}\nonumber 
$ is the Hamming distance between 
$\tilde{w}(e)$ and $w(e)$.

We also consider its relevant minimum cost problem with cardinality constraint (\textbf{MCSDIPTC}) by exchanging its objective function and the norm constraint. The mathematical model of the problem
is as follows.
\begin{eqnarray}
	&\min \limits_{\tilde{w}} & C(\tilde{w}):=\ \| \tilde{w}-w\| \nonumber  \\
	\textbf {(MCSDIPTC) } &s.t.&\sum_{t_k \in Y} \tilde{w}(P_k) \geq D, \nonumber\\
	&&\sum_{e\in E}H(\tilde{w}(e),w(e))\le N,\label{MCSDIPTC} \\
	&&w(e) \leq \tilde{w}(e) \leq u(e), \quad e \in E\nonumber.
\end{eqnarray}

Most network interdiction problems aim to make the network's performance as poor as possible by deleting critical edges or nodes.
Albert (2000) \cite{ref2} found that in a scale-free network when the most connected nodes are removed,
the diameter of the network rapidly increases. They pointed out that this vulnerability that is susceptible to attack is due to the uneven distribution of
connections and the removal of a limited number of nodes in the network can have a significant impact, giving rise to 
the concept of critical nodes and critical edges.
Subsequently, Ayyildiz et al. (2019) \cite{ref3} analyzed how to maximize the length of the shortest path between two
nodes based on a limited interdicting budget from two opposing aspects (breaking and maintaining the network).
On the other hand, Magnouche and Martin (2020) \cite{ref4} studied how to delete the minimum number of critical nodes 
to make the root-leaf path length in the remaining graph at least $d$. They proved the  $\mathcal{NP}$-hardness of the problem, proposed
an integer linear programming model with an exponent $p$ constraints and designed a branch and bound algorithm.

Corley and Sha (1982) \cite{ref5} first applied the NIP by  deleting $K$ critical edges to maximize the length of the 
shortest path. 
Ball et al. (1989) \cite{ref6} proved that the problem is  $\mathcal{NP}$-hard.
Khachiyan et al. (2008) \cite{ref7} proved that there is no approximation algorithm with a performance ratio of 2.
Bazgan et al. (2015) \cite{ref8} provided an $O(mn)$ time algorithm when the increment $b=1$ of the path length, and further (2019) \cite{ref9} proved that the problem is  $\mathcal{NP}$-hard when $b \ge 2$. 

However, it is often difficult to implement the deletion of critical 
edges/nodes in practical applications.
Zhang et al. (2021) \cite{ref10} proposed a concept of updating 
critical edges to replace the limitations of the traditional 
NIP caused by deleting critical edges,
and studied the shortest path interdiction problem (SPIT) 
and its minimum cost problem (MCSPIT) on trees. For the problem (SPIT), they provided an $O(n^2)$ primal-dual algorithm under the weighted $l_1$ norm and improved the time complexity to $O(n)$ under the unit $l_1$ norm.
For the problem (SPIT$_{uH}$) under unit Hamming distance, they \cite{ref11} devised algorithms with time complexities $O(N + l \log l)$ and $O(n(\log n + K^3))$ when $K = 1$
and  $K > 1$, respectively, where $n$, $K$
and $l$ are the numbers of tree nodes, upgraded edges and leaves respectively. 
Afterward, Yi et al. (2023) \cite{ref14} improved the time complexities to $O(n)$ and $O(nK^2)$, respectively.
For the relevant 
minimum cost problem (MCSPIT$_{uH}$), based on dynamic programming 
ideas and binary search methods, Zhang et al. (2020)
\cite{ref11} provided an $O(n^4 \log n)$ algorithm. Yi et al. (2023)\cite{ref14} 
improved the complexity to $O(n^3 \log n)$ through a binary search.
Zhang et al. (2020) \cite{ref11} subsequently demonstrated that the problems 
(SPIT$_H$) and (MCSPIT$_H$) under the weighted Hamming distance are equivalent to the 0-1 knapsack problem and therefore  $\mathcal{NP}$-hard.
Furthermore, for the SRD interdiction problems (SDIPT-UE/N) and (MCSDIPT-UE/N) by upgrading edges/nodes under weighted Hamming distance, Zhang et al. \cite{ref12}
proved its  $\mathcal{NP}$-hardness by showing
the equivalence between the two problems and the 0–1 knapsack problem. 
Under weighted $l_1$ norm, the problems (SDIPT$_1$-UE) and (MCSDIPT$_1$-UE) can be solved in $O(n)$ time by continuous knapsack problems.
For the problem (SDIPT$_{uH}$-UE) under unit Hamming distance  and 
unit node cost problem (SDIPT-UN),
they designed two linear time greedy algorithms. Moreover, 
for the minimum cost problems (MCSDIPT$_{uH}$-UE)under unit Hamming distance 
and unit node cost problem (MCSDIPT-UN), 
they \cite{ref12} used a binary search method to provide algorithms 
with a time complexity of $O(n \log n)$.

Based on the above analysis, we can know that there is no reseach results for the SRD interdiction problems under $l_\infty$ norm and bottleneck Hamming distance, which are sub-problems of the problems (\textbf{SDIPTC}) and (\textbf{MCSDIPTC}) we mainly studied in this paper. We impose a cardinality constrain on the number of modified edges since there are always many optimal solutions, which include some unnecessary modifications of weights, for the problem (\textbf{SDIPT}) under bottleneck norms.
We compare our results with existing studies in Table \ref{table_compare},
where the subscripts $1, u1, H, uH, BH$ and $_\infty$ denote weighted $l_1$-norm, unit $l_1$-norm,
weighted Hamming distance, unit Hamming distance, bottleneck Hamming distance, and weighted $l_\infty$ norm, respectively.

The paper is structured as follows. In Section \ref{sec2}, we propose two algorithms for solving problems (\textbf{SDIPTC$_\infty$}) and (\textbf{MCSDIPTC$_\infty$}) in $O(n)$ and O($N n^2$) time, respectively, which mainly call the subroutines for solving the two sub-problems (\textbf{SDIPT$_ \infty$}) and (\textbf{MCSDIPT$_ \infty$}) in $O(n)$  and O($n \log n $) time, respectively. 
In Section \ref{sec3}, we solve the problems (\textbf{MCSDIPTC$_{BH}$}) and (\textbf{SDIPTC$_{BH}$}) under bottleneck Hamming distance in $O(n)$  and O($n \log n$) time, respectively, which have the same time complexities for the sub-problems without the cardinality constraints. 
In Section \ref{sec4}, we report on computational experiments that demonstrate the effectiveness of all proposed polynomial time algorithms.
Finally, in Section \ref{sec5}, we summarize our findings and suggest some areas for future research.

\begin{table}
	\centering
	\caption{The relationship between the previous research and our research} \label{table_compare}
	\resizebox{1.0\textwidth}{!}
	{
		\begin{tabular}{|c|c|c|c|c|c|c|c}
			\cline{1-7}
			Type & Graph   & Problem    & \multicolumn{2}{c|}{$K/b/c$}  & Complexity & Ref.  &   \\
			\cline{1-7}
			Del. nodes &  \multirow{3}{*}{  \makecell[c]{General \\ graph} }   &  \multirow{5}{*}{ \makecell[c]{NIP on \\ shortest path } }  &    \multicolumn{2}{|c|}{\multirow{3}*{any $K$}}   &  $\mathcal{NP}$-hard              & \cite{ref4}&   \\
			\cline{1-1}\cline{6-7}
			\multirow{4}{*}{Del. edges}  &  &           &  \multicolumn{2}{|c|}{}        &  $\mathcal{NP}$-hard             &  \cite{ref6}     &   \\
			\cline{6-7}
			&    &       &      \multicolumn{2}{|c|}{}     & Not approx. within  2       & \cite{ref7}      &   \\
			\cline{2-2}\cline{4-7}
			&  \multirow{2}{*}{   \makecell[c]{Undirected \\ network} }                      &                                 &                         \multicolumn{2}{|c|}{$b=1$ }                   & $O(mn)$                &   \cite{ref8}    &   \\
			\cline{4-7}
			&                        &                                 &   \multicolumn{2}{|c|}{ $b\geq 2$}                     &  $\mathcal{NP}$-hard              &    \cite{ref9}  &   \\
			\cline{1-7}
			\multirow{4}{*}{\makecell[c]{Upgrad \\ nodes} }  & \multirow{26}{*}{Tree}    & SDIPT-UN                        & \multirow{4}{*}{any $K $} & any $c$                  &  $\mathcal{NP}$-hard               &  \multirow{4}{*}{ \cite{ref12}  }  &   \\
			\cline{3-3}\cline{5-6}
			&                        & MCSDIPT-UN                      &                        & \multirow{3}{*}{$c=1$}   &  $\mathcal{NP}$-hard              &      &   \\
			\cline{3-3}\cline{6-6}
			&                        & SDIPT-UN$_u$                     &                        &                        & $O(n) $                &      &   \\
			\cline{3-3}\cline{6-6}
			&                        & MCSDIPT-UN$_u$                  &                        &                        & $O(n \log n)$             &     &   \\
			\cline{1-1}\cline{3-7}
			\multirow{22}{*}{  \makecell[c]{Upgrad \\ edges} } &                        & \multirow{2}{*}{MCSPIT$_1$}      & \multirow{5}{*}{any $K $} & c=1                    & $O(n) $                &  \multirow{5}{*}{ \cite{ref10}  }    &   \\
			\cline{5-6}
			&                        &                                 &                        & any $c$                  & $O(n^2)$            &       &   \\
			\cline{3-3}\cline{5-6}
			&                        & \multirow{2}{*}{MSPIT$_1$}       &                        & $c=1$                    & $O(n) $                &        &   \\
			\cline{5-6}
			&                        &                                 &                        & any $c$                  & $O(n^2)$            &       &   \\
			\cline{3-3}\cline{5-6}
			&                        & \multirow{3}{*}{MSPIT$_H$}       &                        & any $c$                  &  $\mathcal{NP}$-hard               &        &   \\
			\cline{4-7}
			&                        &                                 & $K=1$                    & $c=1 $                   & $O(n) $        &   \multirow{3}{*}{  \cite{ref14}  \cite{ref11} }     &   \\
			\cline{4-6}
			&                        &                                 & any $K $                 & $c=1  $                  & $O(n K^2)$ &        &   \\
			\cline{3-6}
			&                        & MCSPIT$_{uH}$                      &                        &                        & $O(n^3 \log n) $     &       &   \\
			\cline{3-7}
			&                        & SDIPT-UE$_H$                     & \multirow{2}{*}{any $K$} & \multirow{2}{*}{any $c$} &  $\mathcal{NP}$-hard                  &\multirow{6}{*}{ \cite{ref10}  }   \\
			\cline{3-3}\cline{6-6}
			&                        & MCSDIPT-UE$_H$                 &                        &                        & $\mathcal{NP}$-hard               &        &   \\
			\cline{3-6}
			&                        & SDIPT-UE$_{uH}$                    & \multirow{2}{*}{any $K $} & \multirow{2}{*}{c =1}  &$O(n) $                  &        &   \\
			\cline{3-3}\cline{6-6}
			&                        & MCSDIPT-UE$_{uH}$               &                        &                        &$O(n \log n)$             &       &   \\
			\cline{3-6}
			&                        & SDIPT-UE$_1$                     & \multirow{2}{*}{any $K $} & \multirow{2}{*}{any $c$} & $O(n) $                 &     &   \\
			\cline{3-3}\cline{6-6}
			&                        & MCSDIPT-UE$_1$                &                        &                        & $O(n) $                &       &   \\
			\cline{3-7}
			&                        & SDIPT$_\infty$    & \multirow{8}{*}{any $K $} & \multirow{8}{*}{any $c$} & $O(n) $                 & (\ref{SDIPT_inf_alg})  &   \\
			\cline{3-3}\cline{6-7}
			&                        & SDIPTC$_\infty$   &                        &                        & $O(n)$            & Alg\ref{SDIPTC_inf_alg}  &   \\
			\cline{3-3}\cline{6-7}			
			&                        & MCSDIPT$_\infty$  &                        &                        & $O(n \log n)$             & Alg\ref{MCSDIPT_inf_alg}  &   \\
			\cline{3-3}\cline{6-7}
			&                        & MCSDIPTC$_\infty$ &                        &                        &$O(Nn^2)$             & Alg\ref{MCSDIPTC_inf_alg}  &   \\
			\cline{3-3}\cline{6-7}
			&                        & SDIPT$_{BH}$                       &                        &                        & $O(n) $                 & (\ref{SDIPT_BH_alg})  &   \\
			\cline{3-3}\cline{6-7}
			&                        & SDIPTC$_{BH}$                      &                        &                        &$O(n)$            & Alg\ref{SDIPTC_BH_alg} &   \\
			\cline{3-3}\cline{6-7}
			&                        & MCSDIPT$_{BH}$                     &                        &                        & $O(n \log n)$             & Alg\ref{MCSDIPT_BH_alg}  &   \\
			\cline{3-3}\cline{6-7}
			&                        & MCSDIPTC$_{BH}$                    &                        &                        & $O(n \log n)$            & Alg\ref{MCSDIPTC_BH_alg}  &   \\
			\cline{1-7}
		\end{tabular}
	}
\end{table}

\section{Solve the problems (SDIPTC$_\infty$) and (MCSDIPTC$_\infty$) under weighted $l_\infty$ norm}\label{sec2}

When the weighted $l_\infty$ norm is applied to the upgrade cost, the problems (SDIPTC$_\infty$) and
(MCSDIPTC$_\infty$) under weighted $l_\infty$ norm, 
can be formulated as the following models (\ref{SDIPTC-inf}) and (\ref{MCSDIPTC_inf}), respectively.

\begin{eqnarray}
	&\max \limits_{\tilde{w}} &\tilde{w}(T):= \sum_{t_k \in Y}{\tilde{w}}(P_k)\nonumber\\
	\textbf{(SDIPTC$_\infty$)} & s.t. &   \max_{\ e\in E}c(e)(\tilde{w}(e)-w(e))\le K,\label{SDIPTC-inf} \\ 
	&&\sum_{e\in E}H(\tilde{w}(e),w(e))\le N, \nonumber\\ 
	&& w(e) \leq \tilde{w}(e) \leq u(e), \quad e \in E.\nonumber
\end{eqnarray}

\begin{eqnarray}
	&\min \limits_{\tilde{w}} & C(\tilde{w}):=\max_{ e\in E}c(e) (\tilde{w}(e)-w(e) ) \nonumber \\
	\textbf {(MCSDIPTC$_\infty$) }  & s.t. &  \sum_{t_k \in Y} {\tilde{w}}(P_k) \geq D,\label{MCSDIPTC_inf} \\
	&&\sum_{e\in E}H(\tilde{w}(e),w(e))\le N,\nonumber \\
	&&w(e) \leq \tilde{w}(e) \leq u(e), \quad e \in E\nonumber.
\end{eqnarray}

This section proves the properties of the problem
(\textbf{SDIPTC$_\infty$}) and presents a divide-and-conquer algorithm 
with time complexity of $O(n)$ for it. 
For the relevant minimum cost problem \textbf{(MCSDIPTC$_\infty$)},
this section provides an algorithm with a time complexity of $O(Nn^2)$.

\begin{definition}\cite{ref12} 
	For any $ e \in E$, 
	define $L(e):=\left \{ t_{k} \vert e \in P_{s,t_k},k=1,2, \dots, r \right \}$
	as the set of leaves $t_{k}$ to which $P_{s,t_k}$ passes through $e$.
	If $t_k \in L(e)$, then $t_k$ is controlled by the edge $e$.
\end{definition}

\subsection{An $O(n)$ time algorithm to solve the problem (\textbf{SDIPTC$_\infty$})}

In order to solve problem (\textbf{SDIPTC$_\infty$}), we first consider its sub-problem (SDIPT$_\infty$) 
by deleting the constraint of cardinality. Its mathematical model can be described as follows.

\begin{eqnarray}
	&\max\limits_{\bar{w}}& \bar{w}(T):=\sum_{t_k \in Y}\bar{w}(P_k)\nonumber \\
	\textbf { (SDIPT$_\infty$)} &s.t.&\max_{e\in E}c(e)(\bar{w}(e)-w(e)) \leq K, \label{SDIPT_inf}\\
	&& w(e) \leq \bar{w}(e) \leq u(e), \quad e \in E.\nonumber
\end{eqnarray}

In the problem (\textbf{SDIPT$_\infty$}),
we can maximize the weight of each edge as much as possible under the constraint of
cost $K$ and the upper bound $u(e)$ of the edge weight $u(e)$.
Therefore, we have the following theorem.

\begin{theorem}\label{SDIPT_inf_ans}
	The weight vector 
	\begin{equation}
		\bar w(e)=w(e)+\min \left \{\frac{K}{c(e)},u(e)-w(e) \right \}(e\in E) \label{SDIPT_inf_w}
	\end{equation}
	is an optimal solution to the problem (\textbf{SDIPT$_\infty$}). 
\end{theorem}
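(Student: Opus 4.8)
The plan is to exploit the separable, monotone structure of the objective. First I would rewrite $\bar w(T)$ by interchanging the order of summation: since $\bar w(T)=\sum_{t_k\in Y}\sum_{e\in P_{s,t_k}}\bar w(e)$, each edge $e$ contributes its weight once for every leaf whose root-path passes through it, i.e. once for each $t_k\in L(e)$, so
\[
\bar w(T)=\sum_{e\in E}|L(e)|\,\bar w(e).
\]
Because every edge of a rooted tree lies on at least one root-leaf path, the coefficients satisfy $|L(e)|\ge 1>0$, and hence $\bar w(T)$ is strictly increasing in each individual variable $\bar w(e)$.

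Next I would show that the feasible region is a box. The constraint $\max_{e\in E}c(e)(\bar w(e)-w(e))\le K$ holds if and only if $c(e)(\bar w(e)-w(e))\le K$ for every $e$, which (using $c(e)>0$) is equivalent to $\bar w(e)\le w(e)+K/c(e)$. Combined with $w(e)\le\bar w(e)\le u(e)$, each variable is confined to the interval $[\,w(e),\,\min\{u(e),\,w(e)+K/c(e)\}\,]$, and these intervals are independent across edges.

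Since the objective is coordinate-wise increasing and the feasible set is a Cartesian product of such intervals, the maximum is attained by pushing every coordinate to the top of its interval simultaneously; this choice is feasible (the box structure guarantees no conflict between edges) and dominates any other feasible vector term by term. Setting $\bar w(e)$ to the right endpoint yields $\bar w(e)=\min\{u(e),\,w(e)+K/c(e)\}=w(e)+\min\{K/c(e),\,u(e)-w(e)\}$, which is exactly formula (\ref{SDIPT_inf_w}).

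There is essentially no hard step here; the only point requiring care is the decoupling argument, namely verifying that the single $\max$-type norm constraint is equivalent to $n$ independent per-edge upper bounds. This is what guarantees that the componentwise maxima can be achieved jointly rather than trading off against one another, so that the greedy ``raise each edge as high as possible'' vector is simultaneously feasible and globally optimal.
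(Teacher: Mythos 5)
Your proof is correct and takes essentially the same approach as the paper's: both rest on the observation that the weighted $l_\infty$ constraint decouples into independent per-edge upper bounds $\bar w(e)\le w(e)+K/c(e)$, so that $\bar w$ is the componentwise maximum of the feasible set, and that the objective is increasing in each coordinate. The only difference is presentational---the paper argues by contradiction (any better feasible $w'$ must push some edge above the right endpoint of its interval, violating either the cost or the upper-bound constraint), whereas you argue directly from the box structure and also make explicit the monotonicity step $\bar w(T)=\sum_{e\in E}|L(e)|\,\bar w(e)$ with $|L(e)|\ge 1$, which the paper leaves implicit.
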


\begin{proof}

	\textbf{(1)} Apparently, the solution $\bar w$ defined by (\ref{SDIPT_inf_w})
	satisfies feasibility.
	
	\textbf{(2)} Suppose that $\bar w$ is not the optimal solution,
	and there exists another edge weight vector $w'$ such that $ w'(T)>\bar{w}(T)$.
	Then at least one edge $e_{i} \in E$ satisfies $ w'(e_{i}) >\bar{w}(e_{i}) $.
	We have $w'(e_i)-w(e_i)>\bar{w}(e_i)-w(e_i)= \min \left \{\frac{K}{c(e)},u(e)-w(e) \right \} $,
	which implies $c(e_i)(w'(e_i)-w(e_i))>K$ or $ w'(e_i)>u(e_i)$, 
	either of which contradicts the cost constraint condition or the upper bound constraint condition,
	respectively.
	Therefore, $\bar w$ is an optimal solution to problem (\textbf{SDIPT$_\infty$}).
 \hb
\end{proof}

Based on Theorem \ref{SDIPT_inf_ans}, for any given $T,n,w,u,c,K$, we can obtain an optimal solution $\bar{w}$ 
and its corresponding optimal value $\bar{w}(T)$ for problem (\textbf{SDIPT$_\infty$}).
For convenience, we denote this subroutine as:
\begin{eqnarray} [ \bar{w},\bar{w}(T) ]=SDIPT_{\infty}(T,n,w,u,c,K) \label{SDIPT_inf_alg}\end{eqnarray}
We just update every edges in the subroutine,
therefore, we have the following conclusion.

\begin{theorem}
	Problem (\textbf{SDIPT$_\infty$}) can be solved by subroutine (\ref{SDIPT_inf_alg}) in $O(n)$ time.
\end{theorem}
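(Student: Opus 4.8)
The plan is to split the statement into correctness and running time. Correctness is already in hand: Theorem \ref{SDIPT_inf_ans} certifies that the vector $\bar w$ defined in (\ref{SDIPT_inf_w}) is optimal, so the subroutine (\ref{SDIPT_inf_alg}) returns a correct answer by construction. For the running time, I would first bound the work of forming the weight vector itself: for each edge $e\in E$ the value $\bar w(e)=w(e)+\min\{K/c(e),\,u(e)-w(e)\}$ needs only a constant number of arithmetic operations, so building the entire vector over the $n$ edges costs $O(n)$.

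The step I expect to be the main obstacle is evaluating the objective $\bar w(T)=\sum_{t_k\in Y}\bar w(P_k)$ within the same budget, since traversing each root-leaf path separately need not be $O(n)$: edges near the root are shared among many paths, and the total path length $\sum_k|P_{s,t_k}|$ may be as large as $\Theta(nr)$ for $r$ leaves. To avoid this I would interchange the order of summation using the sets $L(e)$ from the preceding definition. An edge $e$ contributes $\bar w(e)$ to $\bar w(P_{s,t_k})$ exactly when $t_k\in L(e)$, hence
\[
\bar w(T)=\sum_{t_k\in Y}\sum_{e\in P_{s,t_k}}\bar w(e)=\sum_{e\in E}|L(e)|\,\bar w(e).
\]
It then suffices to know $|L(e)|$, the number of leaves in the subtree below $e$, for every edge. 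I would obtain all of these counts in a single post-order traversal of $T$: the edge entering a leaf has count $1$, and the count of any other edge equals the sum of the counts of the edges descending from its lower endpoint. This traversal touches each node and edge once, so it runs in $O(n)$, after which forming the weighted sum $\sum_{e\in E}|L(e)|\,\bar w(e)$ is one further $O(n)$ pass.

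Putting the three $O(n)$ phases together --- constructing $\bar w$, computing the leaf counts by traversal, and forming the weighted sum --- yields the claimed $O(n)$ bound, with optimality inherited from Theorem \ref{SDIPT_inf_ans}.
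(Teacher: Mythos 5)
Your proposal is correct and follows the same route as the paper: the paper's entire justification is the one-line remark that the subroutine ``just updates every edge,'' i.e., a single $O(n)$ pass, with optimality already settled by Theorem \ref{SDIPT_inf_ans}. Your write-up is in fact more complete than the paper's, since you additionally show how to evaluate $\bar w(T)$ within the same budget by interchanging the summation to $\sum_{e\in E}|L(e)|\,\bar w(e)$ and computing all leaf counts $|L(e)|$ in one post-order traversal --- a point the paper leaves entirely implicit.
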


Now we can come to the problem (\textbf{SDIPT$_\infty$}) with cardinality constraint.
First, call $[ \bar{w},\bar{w}(T) ]:=SDIPT_{\infty}(T,n,w,u,c,K)$.
Then, we define $\bar{S}(e)=\vert L(e) \vert (\bar w(e)-w(e))(e\in E)$,
which means the increment  of SRD for $e$ under $\bar w$.
Based on that, we can select the $N$ edges with the largest $\bar{S}(e)$, 
and update them to obtain an optimal solution. 

Therefore, in the first step, we use the algorithm proposed by Thoms (2009, pp.220-222) \cite{ref13} 
called $\bar{S}^N:=Selection(\bar{S},N)$ to find the $N$-th largest element
$\bar{S}^N$ in the set $\bar{S}:=\left \{ \bar{S}(e) \vert e \in E \right \} $,
and thus divide the edge set into two parts, $\bar E_N:=\{ e\in E  \vert  \bar{S}(e) \ge \bar{S}^N \} $ and $E\backslash \bar E_N$.

\begin{theorem} \label{SDIPTC_inf_thm}
	The weight vector	\begin{equation}\label{SDIPTC_inf_w}
		\tilde{w}(e)= \begin{cases}
			\bar w(e), & e\in \bar E_N \\
			w(e),& \text{otherwise }\\
		\end{cases}    
	\end{equation}is an optimal solution of the problem (\textbf{SDIPTC$_\infty$}).
\end{theorem}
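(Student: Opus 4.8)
The plan is to exploit the fact that, under the weighted $l_\infty$ (bottleneck) norm, the cost constraint decouples completely across edges, so the only constraint that genuinely couples the coordinates is the cardinality bound. First I would rewrite the objective in separable form: since each edge $e$ lies on exactly $|L(e)|$ root-leaf paths,
\[
\tilde{w}(T)=\sum_{e\in E}|L(e)|\,\tilde{w}(e)=w(T)+\sum_{e\in E}|L(e)|\bigl(\tilde{w}(e)-w(e)\bigr),
\]
so the increment of the SRD is the sum of the per-edge contributions $|L(e)|(\tilde{w}(e)-w(e))$. Next I would observe that the bottleneck constraint $\max_{e}c(e)(\tilde{w}(e)-w(e))\le K$ is equivalent to the family of independent constraints $c(e)(\tilde{w}(e)-w(e))\le K$ for every $e$; combined with the box constraint $w(e)\le\tilde{w}(e)\le u(e)$, this confines each coordinate to $w(e)\le\tilde{w}(e)\le\bar{w}(e)$, with $\bar{w}(e)$ exactly the value in (\ref{SDIPT_inf_w}) from Theorem \ref{SDIPT_inf_ans}. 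Crucially, upgrading one edge consumes none of the budget available to any other edge.

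From this I would derive the upper bound. For any feasible $\tilde{w}$, each edge satisfies $0\le\tilde{w}(e)-w(e)\le\bar{w}(e)-w(e)$, hence its contribution is at most $\bar{S}(e)=|L(e)|(\bar{w}(e)-w(e))\ge 0$, with equality iff $\tilde{w}(e)=\bar{w}(e)$. Moreover an edge contributes a strictly positive amount only if $\tilde{w}(e)\ne w(e)$, i.e. only if it is counted by the Hamming term, so the cardinality constraint forces at most $N$ edges to have nonzero contribution. Consequently $\tilde{w}(T)-w(T)$ is bounded above by the sum of the $N$ largest values among $\{\bar{S}(e):e\in E\}$, which is precisely $\sum_{e\in\bar{E}_N}\bar{S}(e)$ by the definition of $\bar{E}_N$ through the selection of the $N$-th largest element $\bar{S}^N$.

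Finally I would verify that the proposed vector (\ref{SDIPTC_inf_w}) attains this bound and is feasible, which closes the argument: it sets $\tilde{w}(e)=\bar{w}(e)$ on $\bar{E}_N$ and leaves the rest unchanged, so its increment equals $\sum_{e\in\bar{E}_N}\bar{S}(e)$; its cost is $\max_{e\in\bar{E}_N}c(e)(\bar{w}(e)-w(e))\le K$ by the definition of $\bar{w}$, it modifies exactly the $N$ edges of $\bar{E}_N$, and it respects every box constraint since $w(e)\le\bar{w}(e)\le u(e)$. The main obstacle is not the optimization itself, which the separability reduces to a weighted selection, but the careful bookkeeping of two corner cases. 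First, because every $\bar{S}(e)\ge 0$, I must note that using fewer than $N$ edges never helps, so restricting to exactly $N$ modifications is without loss. Second, ties at the threshold $\bar{S}^N$ may make $|\{e:\bar{S}(e)\ge\bar{S}^N\}|$ exceed $N$, in which case I would break ties arbitrarily to retain exactly $N$ edges in $\bar{E}_N$, the objective value being unaffected since all tied edges share the same $\bar{S}$-value.
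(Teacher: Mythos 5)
Your proof is correct, but it follows a genuinely different route from the paper's. The paper argues by contradiction with an exchange step: assuming a better feasible $\hat{w}$ exists, it locates an edge $e_k$ with $\hat{w}(e_k)>\tilde{w}(e_k)$; if $e_k\in\bar{E}_N$ this violates the cost or bound constraints, and if $e_k\notin\bar{E}_N$ the cardinality bound forces some $\hat{e}\in\bar{E}_N$ to be unmodified by $\hat{w}$, whereupon swapping the modification from $e_k$ to $\hat{e}$ produces a solution $\breve{w}$ strictly better than $\hat{w}$, contradicting its optimality. You instead give a direct bound-and-attain argument: separability of the SRD, the coordinate-wise cap $\tilde{w}(e)\le\bar{w}(e)$ coming from the decoupled $l_\infty$ constraint, and the fact that at most $N$ coordinates can contribute, together bound every feasible objective increment by the sum of the $N$ largest $\bar{S}(e)$, which the proposed vector visibly attains. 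Your version buys two things: it avoids constructing the intermediate solution $\breve{w}$ altogether, and it correctly handles ties at the threshold $\bar{S}^N$ --- a point the paper glosses over, since with ties the set $\bar{E}_N=\{e:\bar{S}(e)\ge\bar{S}^N\}$ can contain more than $N$ edges (making the formula as stated infeasible) and the paper's strict inequality $\bar{S}(\hat{e})>\bar{S}(e_k)$ can fail; your tie-breaking remark repairs both. What the paper's exchange argument buys in return is that its swapping logic is reused later as the engine of the iterative replacement procedure for the minimum-cost problem (\textbf{MCSDIPTC$_\infty$}), so the two proofs there share a template.
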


\begin{proof}
	
	Feasibility is obviously satisfied. 
	Suppose $\tilde{w}$ is not optimal, but $\hat{w}$ is. Then 
	${\hat{w}}(T)>{\tilde{w}}(T)$, and there exists at least one edge $e_k$, such that $\hat{w}(e_k)> \tilde{w} (e_k)$.
	
	\textbf{(1)} If $e_k \in \bar E_N$, then
	$c(e_k)(\hat{w}(e_k)-w(e_k))>c(e_k)(\tilde{w}(e_k)-w(e_k))=\min \left \{ u(e_k),K \right \}$,
	which contradicts the upper bound or update cost constraints.
	
	\textbf{(2)} If $e_k \in E\backslash \bar E_N$, 
	then $\bar w(e_k)\geq \hat w(e_k)>\tilde w(e_k)=w(e_k)$.
	Moreover, there must exist an edge $\hat e$ in $\bar E_N$ 
	such that $\hat w(\hat e)=w(\hat e)<\tilde{w}(\hat e)= \bar w(e_k)$.
	Since $\bar{S}(\hat e)>\bar{S}(e_k)$, there exists a feasible solution $\breve w(e)$ given by
	\begin{eqnarray}
		\breve w(e)= \begin{cases}                \nonumber
			w(e),      &e=e_k\\ 
			\bar w(e), &e= \hat e\\
			\hat w(e),& \text{otherwise}
		\end{cases}
	\end{eqnarray}
	thus we have
	\begin{eqnarray} \nonumber
		&&\breve w(T) -w(T)=\sum_{e \in E\backslash \{e_k,\hat e\}} \vert L(e) \vert (\hat w(e)-w(e))+\bar{S}(\hat e)\\ \nonumber
		&>&\sum_{e \in E\backslash \{e_k\}} \vert L(e) \vert (\hat w(e)-w(e))+\bar{S}(e_k)\\ \nonumber
		&\geq &\sum_{e \in E\backslash \{e_k\}} \vert L(e) \vert (\hat w(e)-w(e))+ \vert L(e_k) \vert (\hat w(e_k)-w(e_k))\\ \nonumber
		&=&\hat w(T) -w(T), \nonumber
	\end{eqnarray}
	this indicates that $\breve w(T) >\bar w(T) $, which 
	contradicts the optimality of $\hat w$. \hb
\end{proof}

Based on theorem \ref{SDIPTC_inf_thm}, we present Algorithm \ref{SDIPTC_inf_alg} below to solve the problem (\textbf{SDIPTC$_\infty$}).

\begin{algorithm}
	\caption{$\left [\tilde{w},\tilde{w}(T) \right ]:=$SDIPTC$_{\infty}(T,n,w,u,c,K,N)$}
	\label{SDIPTC_inf_alg}
	\small
	\begin{algorithmic}[1]
		\REQUIRE A rooted tree $T(V,E)$, the number $n$ of edges, the weight vector $w$, an upper bound vector $u$, 
		a cost vector $c$, and given $K$ and $N$.
		\ENSURE The optimal vector $\tilde{w}$ and the corresponding SRD $ \tilde{w}(T)$.
		\STATE  Call $ [\bar w,\bar w(T)] := SDIPT_{\infty}(T,n,w,u,c,K)$.
		\STATE  For each $e$, determine the set $L(e)$, and calculate $\bar S(e):= \vert L(e) \vert (\bar w(e)-w(e))$.
		\STATE  $\bar{S}^N:=Selection(\bar{S}, N), \bar E_N:=\left \{ e\in E  \vert  \bar{S}(e) \ge \bar{S}^N \right \}$.
		\STATE  Compute $\tilde{w}$ by   (\ref{SDIPTC_inf_w}) and $\tilde{w}(T) := \sum_{t_k \in Y}\tilde{w}(P_k)$ .
		\STATE  \textbf{return} $[\tilde{w},\tilde{w}(T)]$.
	\end{algorithmic}
\end{algorithm}

\begin{theorem}\label{thm-complexity-SDIPTC-infty}
	Problem (\textbf{SDIPTC$_\infty$}) can be solved by Algorithm \ref{SDIPTC_inf_alg} in $O(n)$ time.
\end{theorem}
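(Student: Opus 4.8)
The plan is to treat correctness and running time separately. Correctness is already secured by Theorem \ref{SDIPTC_inf_thm}, which shows that the vector $\tilde w$ produced by (\ref{SDIPTC_inf_w}) is optimal; hence it remains only to verify that each of the five lines of Algorithm \ref{SDIPTC_inf_alg} executes in $O(n)$ time, so that the total is $O(n)$.

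First I would dispatch Line 1: by the theorem establishing that subroutine (\ref{SDIPT_inf_alg}) runs in $O(n)$ time, the call $SDIPT_{\infty}(T,n,w,u,c,K)$ returns $\bar w$ and $\bar w(T)$ within that bound. For Line 2 the main point is that the quantities $|L(e)|$ for all $e\in E$ must be obtained simultaneously, not one at a time. Since $|L(e)|$ is exactly the number of leaves lying in the subtree hanging below $e$, a single post-order traversal of $T$ propagates leaf-counts from children to parents and computes every $|L(e)|$ in $O(n)$ total; each $\bar S(e)=|L(e)|(\bar w(e)-w(e))$ is then a constant-time arithmetic operation, so Line 2 costs $O(n)$.

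Next, Line 3 relies on the linear-time selection procedure of \cite{ref13}: finding the $N$-th largest element $\bar S^N$ of the $n$-element multiset $\bar S$ takes $O(n)$ worst-case time, and a single linear scan then splits $E$ into $\bar E_N=\{e\in E:\bar S(e)\ge \bar S^N\}$ and its complement, again $O(n)$. It is essential here to invoke deterministic linear-time selection rather than sorting, which would incur an extra logarithmic factor. For Line 4, evaluating $\tilde w$ through the closed form (\ref{SDIPTC_inf_w}) is clearly $O(n)$; the only genuine subtlety is computing the objective $\tilde w(T)$. Summing $\sum_{t_k\in Y}\tilde w(P_k)$ path by path would cost $\Theta(\sum_k |P_k|)$, which is $\Theta(n^2)$ on a path-like tree and would break the bound. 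The remedy is to exploit the fact that each edge $e$ appears in exactly $|L(e)|$ root-leaf paths, so that $\tilde w(T)=\sum_{e\in E}|L(e)|\tilde w(e)=w(T)+\sum_{e\in \bar E_N}\bar S(e)$; with the already-computed $|L(e)|$ and $\bar S(e)$, this evaluates in $O(n)$.

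The hardest part is thus not any single estimate but the recognition that two quantities which look like they demand path-wise or sort-based computation, namely the leaf-counts $|L(e)|$ and the SRD $\tilde w(T)$, can both be folded into $O(n)$ tree traversals, and that the $N$-th largest element must be extracted by deterministic linear selection. Once these observations are in place, summing the $O(n)$ costs of Lines 1 through 5 yields the claimed $O(n)$ bound.
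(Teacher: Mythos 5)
Your proposal is correct and follows essentially the same route as the paper's proof: a line-by-line accounting that invokes the $O(n)$ bound for subroutine (\ref{SDIPT_inf_alg}), the linear-time selection algorithm of \cite{ref13}, and the $O(n)$ evaluation of (\ref{SDIPTC_inf_w}). You are in fact more careful than the paper, which silently assumes that the leaf-counts $\vert L(e)\vert$ and the SRD $\tilde w(T)$ can be obtained in linear time; your post-order traversal and the identity $\tilde w(T)=\sum_{e\in E}\vert L(e)\vert \tilde w(e)$ supply exactly the implementation details needed to justify those two steps.
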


\begin{proof}
	In Algorithm \ref{SDIPTC_inf_alg}, line 1 solves problem (\textbf{SDIPT$_{\infty}$}) in linear time,
	the Selection algorithm in Line 3 also takes linear time\cite{ref13}, 
	and Line 4 can determine the optimal solution $\tilde w$ in $O(n)$ time.
	Therefore, problem (\textbf{SDIPTC$_\infty$}) can be solved by Algorithm 
	\ref{SDIPTC_inf_alg} in $O(n)$ time. \hb
\end{proof}

\subsection{An $O(Nn^2)$ time algorithm to solve the problem \textbf{(MCSDIPTC$_\infty$)}}

Solving problem \textbf{(MCSDIPTC$_\infty$)} directly is challenging, and therefore, we first focus on solving its sub-problem (\textbf{MCSDIPT$_\infty$}) without the cardinality constraint. This sub-problem can be described as follows:

\begin{eqnarray}
	&\min\limits_{\bar{w}}& C(\bar w):=\max _ { e \in E} c(e)(\bar{w}(e)-w(e))\nonumber \\
	\textbf {(MCSDIPT$_\infty$) } &s.t.&\sum_{t_k \in Y} \bar{w}(P_k) \geq D,\label{MCSDIPT_inf} \\
	&&w(e) \leq \bar{w}(e) \leq u(e), \quad e \in E\nonumber.
\end{eqnarray}

For convenience, let $\Delta{D}:=D-w(T)$.

\subsubsection{An $O(n \log n )$ algorithm to solve (\textbf{MCSDIPT$_\infty$})}

For the  problem (\textbf{MCSDIPT$_\infty$}),
we can solve two special cases based on the relationship between
$u(T)$ and $D$, as well as $w(T)$ and $D$.

\begin{lemma}
	If $D \le w(T)$, then $w$ is an optimal solution to  (\textbf{MCSDIPT$_\infty$}).
	
\end{lemma}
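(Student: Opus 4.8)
The plan is to prove the contrapositive-flavored observation directly: the constraint $\sum_{t_k \in Y}\bar w(P_k) \ge D$ must be satisfied, and since $\bar w(e) \ge w(e)$ for every edge forces $\bar w(T) \ge w(T)$, the original weight vector $w$ already satisfies the distance constraint whenever $D \le w(T)$. So first I would verify feasibility of $w$ for (\textbf{MCSDIPT$_\infty$}): the bound constraints $w(e) \le \bar w(e) \le u(e)$ hold trivially with $\bar w = w$ (using $u(e) \ge w(e) \ge 0$), and the distance constraint holds because $w(T) = \sum_{t_k \in Y} w(P_k) \ge D$ by the hypothesis $D \le w(T)$.

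Second, I would establish optimality by lower-bounding the objective. The objective $C(\bar w) = \max_{e \in E} c(e)(\bar w(e) - w(e))$ is a maximum of terms each of which is nonnegative, since $c(e) > 0$ and $\bar w(e) \ge w(e)$ for every feasible $\bar w$. Hence $C(\bar w) \ge 0$ for all feasible $\bar w$. Evaluating the objective at $w$ itself gives $C(w) = \max_{e \in E} c(e)(w(e) - w(e)) = 0$. Combining these, $w$ attains the minimum possible objective value $0$, so it is optimal. This is essentially a one-line argument once the sign structure is noted.

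The main step to get right is simply the direction of the inequalities and the nonnegativity of the cost terms; there is no real obstacle here, since this is a boundary case where no upgrading is needed. The only subtlety worth flagging is that optimality relies on the cost being driven entirely by upward modifications (guaranteed by $\bar w(e) \ge w(e)$ and $c(e) > 0$), which rules out any feasible solution achieving a negative objective. I would keep the write-up to two short sentences: one asserting feasibility of $w$, and one observing $C(w) = 0 \le C(\bar w)$ for every feasible $\bar w$.
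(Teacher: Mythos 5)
Your proof is correct: $w$ is feasible since $w(T)\ge D$ and the bound constraints hold trivially, and optimality follows because $c(e)>0$ and $\bar w(e)\ge w(e)$ force $C(\bar w)\ge 0$ for every feasible $\bar w$, while $C(w)=0$. The paper states this lemma without proof, calling it obvious, and your argument is exactly the intended one, so there is nothing to reconcile.
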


\begin{lemma}
	If $u(T) < D$, then the problem (\textbf{MCSDIPT$_\infty$}) is infeasible.
\end{lemma}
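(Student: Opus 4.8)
The plan is to show that $u(T)$ serves as a hard upper bound on the SRD quantity $\bar w(T)$ for every vector $\bar w$ satisfying the box constraints, and then to contradict the covering constraint $\bar w(T) \ge D$. Concretely, I would first rewrite the SRD as a weighted sum over edges: since each leaf path $P_{s,t_k}$ picks up the term $\bar w(e)$ exactly when $t_k \in L(e)$, we have $\bar w(T) = \sum_{t_k \in Y} \bar w(P_k) = \sum_{e \in E} \vert L(e) \vert \, \bar w(e)$. The coefficients $\vert L(e) \vert$ are nonnegative, so $\bar w(T)$ is monotone non-decreasing in each edge weight $\bar w(e)$.

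Next I would invoke the feasibility box constraint $w(e) \le \bar w(e) \le u(e)$, which must hold for any feasible $\bar w$. By monotonicity, replacing each $\bar w(e)$ by its largest admissible value $u(e)$ can only increase the SRD, so $\bar w(T) = \sum_{e\in E}\vert L(e)\vert\,\bar w(e) \le \sum_{e\in E}\vert L(e)\vert\,u(e) = u(T)$. Equivalently, one can argue path-by-path: $\bar w(P_k) = \sum_{e \in P_{s,t_k}} \bar w(e) \le \sum_{e \in P_{s,t_k}} u(e) = u(P_k)$, and summing over all leaves $t_k \in Y$ yields $\bar w(T) \le u(T)$.

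Finally I would combine these estimates with the hypothesis. Any feasible $\bar w$ must satisfy $\bar w(T) = \sum_{t_k \in Y}\bar w(P_k) \ge D$. But we have just shown $\bar w(T) \le u(T) < D$, a contradiction. Hence no vector $\bar w$ can satisfy both the covering constraint and the box constraints simultaneously, so (\textbf{MCSDIPT$_\infty$}) admits no feasible solution.

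I do not anticipate a genuine obstacle here; the statement is essentially the observation that $u(T)$ is the maximum attainable SRD. The only points requiring care are fixing the conventions $u(P_k) := \sum_{e \in P_{s,t_k}} u(e)$ and $u(T) := \sum_{t_k \in Y} u(P_k)$ consistently with the definitions of $w(P_k)$ and $w(T)$, and noting explicitly that the edge coefficients $\vert L(e)\vert$ are nonnegative, which is precisely what makes the monotonicity step valid.
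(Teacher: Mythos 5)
Your proof is correct and is precisely the argument the paper has in mind: the paper declares this lemma's proof ``obvious,'' and your write-up — bounding $\bar w(T) = \sum_{e\in E}\vert L(e)\vert\,\bar w(e) \le u(T)$ via the box constraints and contradicting $\bar w(T)\ge D$ — is the standard filling-in of that observation. No gaps; your care about the conventions $u(P_k)$ and $u(T)$ is appropriate and consistent with the paper's usage.
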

The proofs of the above two lemmas are obvious.
Then we consider the situation that  $w(T) < D \le u(T)$.
First, we will prove an interesting property of the optimal solution. 
\begin{lemma} \label{lemma-sum=D}
	Let $\bar{w}$ be an optimal solution of problem (\textbf{MCSDIPT$_\infty$}),
	then $\bar w(T)=\sum_{t_k\in Y} \bar w(P_k)=D$.
\end{lemma}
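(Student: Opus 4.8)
The plan is to argue by contradiction, exploiting slack in the SRD constraint. Since every feasible $\bar w$ satisfies $\sum_{t_k\in Y}\bar w(P_k)\ge D$, we always have $\bar w(T)\ge D$, so it suffices to rule out $\bar w(T)>D$. First I would record that the optimal cost $C^{*}:=C(\bar w)=\max_{e\in E}c(e)(\bar w(e)-w(e))$ must be strictly positive: if $C^{*}=0$ then $\bar w\equiv w$ and $\bar w(T)=w(T)<D$, contradicting feasibility. Hence the \emph{critical set} $E^{*}:=\{e\in E: c(e)(\bar w(e)-w(e))=C^{*}\}$ is nonempty, and every $e\in E^{*}$ has $\bar w(e)>w(e)$.

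Next, assuming $\bar w(T)>D$, I would construct a cheaper feasible solution by shrinking the weights on the critical edges. The idea is to decrease each $e\in E^{*}$ by an amount proportional to $1/c(e)$, i.e. set $\breve w(e)=\bar w(e)-\varepsilon/c(e)$ for $e\in E^{*}$ and $\breve w(e)=\bar w(e)$ otherwise, for a small $\varepsilon>0$. This choice makes $c(e)(\breve w(e)-w(e))=C^{*}-\varepsilon$ simultaneously for \emph{all} critical edges, so the new maximum over $E^{*}$ drops to $C^{*}-\varepsilon$. Writing $C_{2}:=\max_{e\in E\setminus E^{*}}c(e)(\bar w(e)-w(e))<C^{*}$ for the second tier, the cost of $\breve w$ equals $\max\{C^{*}-\varepsilon,\,C_{2}\}$, which is strictly below $C^{*}$ for every $\varepsilon>0$.

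Finally I would choose $\varepsilon>0$ small enough to preserve feasibility. The box constraints require $\breve w(e)\ge w(e)$, which holds as long as $\varepsilon\le C^{*}$ (since then $\varepsilon/c(e)\le c(e)(\bar w(e)-w(e))/c(e)=\bar w(e)-w(e)$ for each critical $e$), and the SRD must stay at least $D$, which holds once the total decrease $\sum_{e\in E^{*}}|L(e)|\,\varepsilon/c(e)$ does not exceed the slack $\bar w(T)-D$. Both conditions are met for all sufficiently small $\varepsilon>0$, so $\breve w$ is feasible with $C(\breve w)<C^{*}$, contradicting the optimality of $\bar w$. Therefore $\bar w(T)=D$. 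I expect the main obstacle to be precisely the ties among the maximizing edges: reducing a single critical edge need not lower the objective when $|E^{*}|>1$, so the simultaneous, cost-balanced perturbation $\varepsilon/c(e)$ is the crucial device that forces the whole maximum down at once while keeping control of both the box and SRD constraints.
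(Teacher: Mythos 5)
Your proof is correct, and at its core it uses the same device as the paper's proof: a contradiction via a cost-balanced perturbation in which weights are decreased by an amount proportional to $1/c(e)$, so that the maximum cost drops uniformly. The difference is in which edges get perturbed and how the step size is chosen, and this difference is substantive. The paper decreases \emph{every} edge by $d/\bigl(c(e)\sum_{e_i\in E}|L(e_i)|/c(e_i)\bigr)$, with the scaling chosen so that the new SRD equals $D$ exactly in a single step; you decrease only the critical edges $E^{*}$ by $\varepsilon/c(e)$ for a small $\varepsilon>0$. Your restriction to $E^{*}$ is not cosmetic: any edge with $\bar w(e)=w(e)$ sits at its lower bound, and the paper's uniform perturbation pushes such an edge strictly below $w(e)$, violating the box constraint $w(e)\le \bar w'(e)$ — a feasibility issue the paper's proof never addresses. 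Your preliminary observation that $C^{*}>0$ forces $\bar w(e)>w(e)$ on every critical edge is precisely what guarantees room to move downward, so your argument is airtight where the paper's is loose. The price you pay is that your perturbed solution satisfies $\breve w(T)>D$ rather than $\breve w(T)=D$, but since feasibility only requires the inequality $\ge D$, this costs nothing. Two minor points worth tidying: when $E^{*}=E$ the second tier $C_{2}$ is a maximum over an empty set (the conclusion $C(\breve w)=C^{*}-\varepsilon<C^{*}$ still holds, but state the convention), and strictly speaking your bound $\varepsilon\le C^{*}$ for the box constraint should be combined with the SRD bound by taking $\varepsilon$ to be the minimum of the two thresholds, which you do implicitly by saying "for all sufficiently small $\varepsilon$."
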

\begin{proof}
	Suppose  SRD under optimal solution 
	$\bar{w}$ is $\bar w(T)=D+d(d>0)$.
	Let $\bar w'(e)=\bar w(e) -\frac{d}{c(e)\sum_{e_i \in E}\frac{ \vert L(e_i) \vert }{c(e_i)} }(e \in E$),
	then  
	\begin{eqnarray} \nonumber 
		\bar w'(T) & = & \sum_{t_k\in Y} \bar w'(P_k)  \\ \nonumber 
		&=& \sum_{e \in E} \vert L(e) \vert (\bar w'(e)-\bar w(e))+\sum_{e \in E} \vert L(e) \vert \bar w(e)\\\nonumber 
		&=& -\sum_{e \in E}\frac{d}{c(e)\sum_{e_i \in E}\frac{ \vert L(e_i) \vert }{c(e_i)} } \vert L(e) \vert  + \bar w(T)\\\nonumber 
		&=& -\frac{d}{\sum_{e_i \in E}\frac{ \vert L(e_i) \vert }{c(e_i)}}\sum_{e \in E}\frac{ \vert L(e) \vert }{c(e)}+D+d\\ \nonumber 
		&=& D \\ \nonumber 
	\end{eqnarray}
	while the maximum cost 
	$$C(\bar{w}')=\max_{e \in E}c(e)(\bar w'(e)-w(e))<\max_{e \in E}c(e)(\bar w(e)-w(e))=C(\bar{w})$$
	which contradicts the optimality of $\bar w$. \hb
\end{proof}

Here we construct an optimal solution to the problem (\textbf{MCSDIPT$_\infty$}) 
where $w(T) < D \le u(T)$.
Using the property of the $l_\infty$ norm, 
we know that when the update costs are fixed,
updating every edge can increase  SRD
by the maximum amount. For each edge $e \in E$, 
let $F(e) = c(e)(u(e)-w(e))$
represent the total cost required
to achieve the maximum possible update.
Then we sort the $F(e)$ values of all edges in a non-decreasing order
and renumber them to obtain the sequence $\left \{ F(e_{m_k}) \right \} $ as follows:

\begin{eqnarray}\label{eq-sort-Fe}
	F(e_{m_{1}}) \leq F(e_{m_{2}}) \leq \dots \leq F(e_{m_{n}}).
\end{eqnarray}

For an index $k$, define a function 
\begin{eqnarray}
	f(k)=\sum_{e\in E} \vert L(e) \vert \cdot\min\left\{u(e)-w(e),\frac{F(e_{m_k})}{c(e)}\right\}\label{eq-fx}
\end{eqnarray}
to represent the maximum increment of SRD 
when the update cost is $F(e_{m_k})$

Next, using the binary search method on the sequence $\left \{F(e_{m_k}) \right \}$,
we can determine a critical index $k^*$ such that $f(k^*)\leq D-w(T)\leq f(k^*+1)$.
This allows us to obtain a unique optimal solution to the problem 
(\textbf{MCSDIPT$_\infty$}).
\begin{theorem} \label{MCSDIPT_inf_theorem}
	When $w(T) < D \le u(T)$, let $E_\leq:=\{e\in E \vert F(e)\leq F(e_{m_{k^*}})\}$, the weight vector
	\begin{equation} 
		\bar{w}(e)=\begin{cases} \label{MCSDIPT_inf_w}
			u(e),& e\in E_\leq\\
			w(e)+\frac{F(e_{m_{k*}})}{c(e)} 
			+\frac{\Delta{D} - f(k^*) } {c(e) \sum_{j=k^*+1}^{n} \frac{ \vert L(e_{m_j}) \vert }{c(e_{m_j})}  },& 
			{ e\in E\backslash E_\leq}\\
		\end{cases} 
	\end{equation}
	is the unique optimal solution to the problem (\textbf{MCSDIPT$_\infty$}),  
\end{theorem}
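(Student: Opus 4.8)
The plan is to recognize the optimal cost as the unique root of a one-dimensional monotone equation and then to show that formula (\ref{MCSDIPT_inf_w}) merely saturates every edge at that cost level. First I would introduce the ``maximum SRD increment achievable at budget $C$'' function
$$g(C):=\sum_{e\in E}\vert L(e)\vert\,\min\left\{u(e)-w(e),\frac{C}{c(e)}\right\},$$
which equals the largest possible value of $\bar w(T)-w(T)$ over all weight vectors whose cost $\max_{e}c(e)(\bar w(e)-w(e))$ does not exceed $C$. The key observation is that $g(F(e_{m_k}))=f(k)$, so $f$ is exactly the restriction of $g$ to the sorted breakpoints $F(e_{m_1})\le\cdots\le F(e_{m_n})$; moreover $g$ is continuous, non-decreasing and piecewise linear, with slope $\sum_{e:\,C<F(e)}\vert L(e)\vert/c(e)$, which is strictly positive wherever at least one edge remains unsaturated.

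Next I would locate the optimal cost. Since $g$ is continuous and $f(k^*)\le\Delta D\le f(k^*+1)$, there is a value $C^*\in[F(e_{m_{k^*}}),F(e_{m_{k^*+1}})]$ with $g(C^*)=\Delta D$. On this linear piece the edges of $E_\leq$ are already saturated, while those of $E\setminus E_\leq$ contribute the slope $\sum_{j=k^*+1}^{n}\vert L(e_{m_j})\vert/c(e_{m_j})$, so solving $g(C^*)=\Delta D$ yields the closed form
$$C^*=F(e_{m_{k^*}})+\frac{\Delta D-f(k^*)}{\sum_{j=k^*+1}^{n}\vert L(e_{m_j})\vert/c(e_{m_j})}.$$
Substituting this $C^*$ into the rule ``raise each edge by $\min\{u(e)-w(e),C^*/c(e)\}$'' reproduces (\ref{MCSDIPT_inf_w}) term by term: edges in $E_\leq$ reach $u(e)$ because $F(e)\le F(e_{m_{k^*}})\le C^*$, and edges in $E\setminus E_\leq$ reach $w(e)+C^*/c(e)$ because $C^*\le F(e_{m_{k^*+1}})\le F(e)$.

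With $\bar w$ so identified, feasibility is routine: the box constraints hold by the two inequalities just noted, and $\bar w(T)-w(T)=g(C^*)=\Delta D$ gives $\bar w(T)=D$. For optimality I would show the minimum cost equals $C^*$: any feasible $w'$ obeys $w'(T)-w(T)\le g(C(w'))$, so feasibility forces $g(C(w'))\ge\Delta D$; since $g$ is strictly increasing at $C^*$ (the edges of $E\setminus E_\leq$ being unsaturated there), the assumption $C(w')<C^*$ would give $g(C(w'))<\Delta D$, a contradiction. Uniqueness then follows by combining this with Lemma \ref{lemma-sum=D}: any optimal $w^*$ has cost exactly $C^*$ and, by the lemma, $w^*(T)=D$, whence $\sum_{e}\vert L(e)\vert(w^*(e)-w(e))=\Delta D=g(C^*)$; but each summand is bounded by $\vert L(e)\vert\min\{u(e)-w(e),C^*/c(e)\}$, so every bound must be tight, which pins down $w^*=\bar w$.

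I expect the main obstacle to lie in the optimality and uniqueness step rather than in the algebraic verification of the formula. The delicate point is asserting that $g$ is strictly increasing at $C^*$, so that no strictly smaller cost can stay feasible; this uses $E\setminus E_\leq\neq\emptyset$, guaranteed because the defining inequality $f(k^*)\le\Delta D\le f(k^*+1)$ forces $k^*+1\le n$, and it also implicitly relies on $\Delta D\le u(T)-w(T)=f(n)$ from the case hypothesis $D\le u(T)$. I would handle the boundary situations (the equalities $\Delta D=f(k^*)$ or $\Delta D=f(k^*+1)$, and ties among equal $F(e)$ values) with care, since they govern whether $C^*$ lands at a breakpoint and whether the ``saturate every edge at $C^*$'' description is literally forced.
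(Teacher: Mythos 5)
Your proposal is correct, and it takes a genuinely different route from the paper's proof. You organize everything around the budget function $g(C)$ --- the maximum SRD increment achievable at cost $C$ --- of which $f$ is the restriction to the breakpoints $F(e_{m_k})$, you identify the optimal cost $C^*$ as the root of $g(C)=\Delta D$ on the relevant linear piece, and you read off formula (\ref{MCSDIPT_inf_w}) as ``saturate every edge at level $C^*$''. Optimality then follows from the inequality $w'(T)-w(T)\le g(C(w'))$ together with the strict increase of $g$ just below $C^*$, and uniqueness from the fact that $n$ termwise upper bounds $\vert L(e)\vert(w^*(e)-w(e))\le \vert L(e)\vert\min\{u(e)-w(e),C^*/c(e)\}$ whose sum equals exactly $\Delta D$ must all be tight. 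The paper instead verifies the formula directly: its feasibility step contains the same slope computation (expressing $f(k^*+1)-f(k^*)$ via $F(e_{m_{k^*+1}})-F(e_{m_{k^*}})$), but its optimality proof is an edge-exchange contradiction that invokes Lemma \ref{lemma-sum=D} to locate an edge raised more by a supposedly cheaper competitor, and its uniqueness proof explicitly constructs strictly cheaper perturbed solutions (raise one deviating edge, proportionally lower the maximum-cost edges). Your version buys a cleaner, unified argument --- in fact your uniqueness step does not even need Lemma \ref{lemma-sum=D}, since feasibility plus the termwise bounds already force $w^*(T)=D$ --- and it makes the binary search of Algorithm \ref{MCSDIPT_inf_alg} transparent as root-finding on a monotone piecewise-linear function; the paper's version is more elementary and self-contained, needing no auxiliary function machinery. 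Both treatments share the same looseness at the boundary cases (ties among $F$-values, $\Delta D=f(k^*+1)$, or $E\setminus E_\leq=\emptyset$), which you explicitly flag; one caveat worth recording: $k^*+1\le n$ alone does not guarantee $E\setminus E_\leq\neq\emptyset$ when $F(e_{m_{k^*+1}})=F(e_{m_{k^*}})$, but if $E\setminus E_\leq=\emptyset$ then $f(k^*)=u(T)-w(T)$, hence $D=u(T)$ and $\bar w=u$ is the only feasible point, so your conclusion still holds there.
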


\begin{proof}
	\textbf{(1)} \textbf{First, we prove that $\bar{w}$ is a feasible solution to problem \textbf{MCSDIPT$_\infty$}. }
	
	\textbf{(1.1)} We prove that $\bar w(e)$ for $e \in E$ 
	satisfies the bound constraint. When $e\in E_\leq$,
	we have $\bar w(e)=u(e)$; and when $e\in E\backslash E_\leq$, we have $\frac{F(e_{m_{k^*}})}{c(e)}<u(e)-w(e)$.Then
	\begin{eqnarray*}
		&& f(k^*+1)- f(k^*)\\
		&=&\sum_{i=1}^n\min\{F(e_{m_i}),F(e_{m_{k^*+1}})\}\frac{ \vert L(e_{m_i}) \vert }{c(e_{m_i})}\\&&-\sum_{i=1}^n\min\{F(e_{m_i}),F(e_{m_{k^*}})\}\frac{ \vert L(e_{m_i}) \vert }{c(e_{m_i})}\\
		&=&\left\{\sum_{i=1}^{k^*}F(e_{m_i})\frac{ \vert L(e_{m_i}) \vert }{c(e_{m_i})}+\sum_{i=k^*+1}^{n}F(e_{m_{k^*+1}})\frac{ \vert L(e_{m_i}) \vert }{c(e_{m_i})}\right\}\\
		&&-\left\{\sum_{i=1}^{k^*}F(e_{m_i})\frac{ \vert L(e_{m_i}) \vert }{c(e_{m_i})}+\sum_{i=k^*+1}^{n}F(e_{m_{k^*}})\frac{ \vert L(e_{m_i}) \vert }{c(e_{m_i})}\right\}\\
		&=&\bigg(F(e_{m_{k^*+1}})-F(e_{m_{k^*}})\bigg)\sum_{i=k^*+1}^n\frac{ \vert L(e_{m_i}) \vert }{c(e_{m_i})}.
	\end{eqnarray*}
	Therefore 
	\begin{eqnarray*}
		&&c(e)(\bar{w}(e)-w(e))=\frac{\Delta{D} - f(k^*) } { \sum_{i=k^*+1}^{n} \frac{ \vert L(e_{m_i}) \vert }{c(e_{m_i})}}+F(e_{m_k^*})\\
		&<&\frac{f(k^*+1)-f(k^*)}{\sum_{i=k^*+1}^{n} \frac{ \vert L(e_{m_i}) \vert }{c(e_{m_i})}} +F(e_{m_{k^*}})\\
		& =&\frac{\bigg(F(e_{m_{k^*+1}})-F(e_{m_{k^*}})\bigg)\sum_{i=k^*+1}^n\frac{ \vert L(e_{m_i}) \vert }{c(e_{m_i})}}{\sum_{i=k^*+1}^{n} \frac{ \vert L(e_{m_i}) \vert }{c(e_{m_i})}}+F(e_{m_{k^*}})\\
		& =&F(e_{m_{k^*+1}})\leq F(e).
	\end{eqnarray*}
	Hence, $w(e)\leq \bar w(e)<u(e)$, for any $e\in E\backslash E_\leq$.

	\textbf{(1.2)} Prove that under $\bar w$, $\bar w(T)  \ge D$.  
	\begin{eqnarray}
		&&\bar{w}(T)-w(T)=\sum_{t_k\in Y} \bar w(P_k)-\sum_{t_k\in Y} w(P_k)=\sum_{e\in E} (\bar w(e)-w(e)) \vert L(e) \vert \nonumber\\
		&=&\sum_{e\in E_\leq } (u(e)-w(e)) \vert L(e) \vert \nonumber+ \sum_{e\in E\backslash E_\leq } \bigg(\frac{F(e_{m_k*})}{c(e)}	+\frac{\Delta{D} - f(k^*) } { \sum_{j=k^*+1}^{n} \frac{ \vert L(e_{m_j}) \vert }{c(e_{m_j})} c(e) }\bigg) \vert L(e) \vert \nonumber\\
		&=&f(k^*)+\frac{\big(\Delta{D} - f(k^*)\big)\cdot \sum_{j=k^*+1}^{n} \frac{ \vert L(e_{m_j}) \vert }{c(e_{m_j})}  } { \sum_{j=k^*+1}^{n} \frac{ \vert L(e_{m_j}) \vert }{c(e_{m_j})} }\nonumber\\
		&=&\Delta{D} \label{os-alg2-eq} .\nonumber
	\end{eqnarray}
	
	Therefore,  $\bar w(T)= D$, and $\bar{w}$ is a feasible solution to problem \textbf{MCSDIPT$_\infty$}.
	
	\textbf{(2)} \textbf{Next, we prove the optimality of $\bar{w}$.}
	
	Assume that $\bar{w}$ is not the optimal solution of problem (\textbf{MCSDIPT$_\infty$}), 
	but $w'$ is.
	Then there exists $\bar e \in E$ such that
	$C(\bar{w})=c(\bar{e})(\bar{w}(\bar e)-w(\bar e))>C(w')\ge c(\bar e)(w'(\bar e)-w(\bar e))$.
	Therefore, the total increment in SRD
	due to edge $\bar e$ is $ \vert L(\bar e) \vert (w'(\bar e)-w(\bar e))< \vert L(\bar e) \vert (\bar{w}(\bar e)-w(\bar e))$.
	According to Lemma \ref{lemma-sum=D}, and $\bar w(T)=w'(T)= D$,
	there exists an edge $\hat e\in E$ such that $ \vert L(\hat e) \vert (w'(\hat e)-w(\hat e))> \vert L(\hat e) \vert (\bar{w}(\hat e)-w(\hat e))$,
	which implies that $w'(\hat e)>\bar{w}(\hat e)$.
	
	\textbf{(2.1)} If $\hat e\in E_\leq$, then $w'(\hat e) > \bar w(\hat e)=u(\hat e)$, which contradicts the feasibility of $w'$.
	
	\textbf{(2.2)} If $\hat e\in E\backslash E_\leq$, then $C(w')\geq c(\hat e)(w'(\hat e)-w(\hat e)) > c(\hat e)(\bar{w}(\hat e)-w(\hat e))=C(\bar w)$, which contradicts $C(w')<C(\bar w)$.
	
	Therefore, $\bar w$ is the optimal solution to the problem (\textbf{MCSDIPT$_\infty$}).
	
	\textbf{(3)} \textbf{We prove the uniqueness of $\bar w$ by contradiction}
	
	Suppose $\hat{w}$ is another optimal solution of the problem (\textbf{MCSDIPT$_\infty$}), 
	such that $C(\hat w)=C(\bar w)$ and there exists $e_a \in E$ such that 
	$\hat{w}(e_a) \ne \bar{w}(e_a)$.
	
	\textbf{(3.1)} If $e_a \in E_\leq$, then we have $\hat w(e_a)<\bar w(e_a) =u(e_a)$,
	and we can construct a feasible solution 
	$\hat{w}'(e)=
	\begin{cases}
		u(e),& e =e_a \\
		\hat w (e),& \text{otherwise} \\
	\end{cases} $, 
	where $\sum_{t_k\in Y} \hat w'(P_k)=D+ \vert L(e_a) \vert (u(e_a)-\hat w(e_a))>D$.
	Thus we construct another feasible solution 
	$\hat w''(e)=	\hat w'(e) -\frac{(u(e_a)-\hat w(e_a)) \vert L(e_a) \vert }{c(e)\sum_{e_i \in E}\frac{ \vert L(e_i) \vert }{c(e_i)} },e \in E $,
	we get $ C(	\hat w'') <C(	\hat w')=\max\{C(\hat w),F(e_a)\}=\max\{C(\bar w),F(e_a)\}=C(\bar w)$,
	which contradicts the optimality of $\bar w$.

	\textbf{(3.2)} If $ e_a \in E\backslash E_\leq$ and $ \hat w (e_a)>\bar w(e_a)$,
	then $C(\hat w)\ge c(e_a)(\hat w(e_a)-w(e_a))>c(e_a)(\bar w(e_a)-w(e_a))=C(\bar w)$, which contradicts to the optimality of $\hat w$.
	Suppose $ \hat w (e_a) < \bar w(e_a)$, similarly we can construct a feasible solution
	$  
	\hat{w}'(e)=\begin{cases}
		\bar w(e_a),& e =e_a \\
		\hat w (e),& \text{otherwise}  \\
	\end{cases} 
	$,  where $\sum_{t_k\in Y} \hat w'(P_k)=D+ \vert L(e_a) \vert (\bar w (e_a)-\hat w(e_a))>\sum_{t_k\in Y} \bar w(P_k)=D$.
	
 Then we construct $\hat w''(e)=\hat w'(e) -\frac{ \vert L(e_a) \vert (\bar w (e_a)-\hat w(e_a))}{c(e) \sum_{e_i \in E}\frac{ \vert L(e_i) \vert }{c(e_i)} }(e \in E )$,
	now we have $ \hat w''(T)=D$ and $ C(\hat w'')<C(\hat w') = C(\bar w)$, which contradicts to the optimality of $\bar w$.
	
	In conclusion, the weight vector defined by   (\ref{MCSDIPT_inf_w}) is the unique optimal solution to the problem (\textbf{MCSDIPT$_\infty$}).  \hb
\end{proof}

Based on Theorem \ref{MCSDIPT_inf_theorem}, we present Algorithm \ref{MCSDIPT_inf_alg} 
for problem (\textbf{MCSDIPT$_\infty$}).
\begin{algorithm}
	\caption{$\left [\bar{w},C(\bar w) \right ]:=$MCSDIPT$_{\infty}(T,n,w,u,c,D)$}
	\label{MCSDIPT_inf_alg}
	\small
	\begin{algorithmic}[1]
		\REQUIRE A rooted tree $T(V,E)$, the number $n$ of edges, the weight vector $w$, an upper bound vector $u$, 
		a cost vector $c$, and given value $D$.
		\ENSURE The optimal vector $\bar{w}$ and the cost  $C(\bar w)$.
		\STATE  For each edge $e \in E $, determine the set $L(e)$ and calculate $F(e):=c(e)(u(e)-w(e))$, 
		$u(T):=\sum_{t_k\in Y} u(P_k)$, and let $\Delta{D}:=D-w(T)$
		\IF{${u}(T)<D$} 
		\STATE  \textbf{return} ``The problem is infeasible" and $\bar{w}=[\;], C(\bar{w})= + \infty$ 
		\ELSE
		\IF{$w(T)>D$}
		\STATE  \textbf{return} $[w,0]$.
		\ELSE
		\STATE  Sort $F(e)$ in a non-decreasing order by (\ref{eq-sort-Fe}).
				
		\STATE  Initialize $a:=1$, $b:=n$, $k^*:=0$.
		\WHILE{$b-a >1$ and $k^*=0$}
		\STATE  $k:=\left \lfloor \frac{a+b}{2} \right \rfloor$.
		\STATE  Calculate the values of $f(k)$ and $f(k+1)$ using   (\ref{eq-fx}).
		\IF{$f(k)\leq \Delta D  \leq f(k+1)$}
		\STATE  $k^*:= k$.
		\ELSIF{$f(k+1)< \Delta D$}
		\STATE  $a := k$.
		\ELSE
		\STATE  $b := k$.
		\ENDIF
		\ENDWHILE
		\STATE  Calculate $\bar w$ using   (\ref{MCSDIPT_inf_w}), $ C(\bar w):=F(e_{m_{k^*}})
		+\frac{\Delta{D} - f(k^*) } { \sum_{j=k^*+1}^{n} \frac{ \vert L(e_{m_j}) \vert }{c(e_{m_j})}  }$.
		\STATE  \textbf{return} $[\bar{ w}, C({\bar w})]$.
		\ENDIF
		\ENDIF
	\end{algorithmic}
\end{algorithm}

\begin{theorem}\label{MCSDIPT_inf_time}\label{thm-complexity-MCSDIPT-infty}
	The problem (\textbf{MCSDIPT$_\infty$}) can be solved by Algorithm \ref{MCSDIPT_inf_alg}  in $O(n\log n)$ time.
\end{theorem}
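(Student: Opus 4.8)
The plan is to prove the statement in two parts — correctness and running time — leaning on the structural results already established and then accounting for Algorithm \ref{MCSDIPT_inf_alg} line by line.

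For correctness, I would argue that the three branches of the algorithm correspond exactly to the three regimes of the preceding case analysis. The branch $u(T) < D$ (lines 2--3) returns infeasibility, justified by the lemma asserting infeasibility when $u(T) < D$; the branch $w(T) > D$ (lines 5--6) returns $w$ at cost $0$, justified by the lemma that $w$ is optimal when $D \le w(T)$; and the remaining regime $w(T) < D \le u(T)$ is handled by lines 8--20, whose output is precisely the vector of (\ref{MCSDIPT_inf_w}), which by Theorem \ref{MCSDIPT_inf_theorem} is the unique optimal solution — \emph{provided} the binary search correctly locates the critical index $k^*$ with $f(k^*)\le \Delta D \le f(k^*+1)$. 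To close this gap I would note that, since $\{F(e_{m_k})\}$ is non-decreasing by (\ref{eq-sort-Fe}), the function $f$ in (\ref{eq-fx}) is non-decreasing in $k$; combined with $f(n) = u(T)-w(T) \ge \Delta D \ge 0$, this guarantees the existence of $k^*$ and makes the search in lines 10--19 valid.

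For the running time I would analyze the algorithm step by step. All the leaf counts $|L(e)|$ — the number of leaves in the subtree hanging below $e$ — are obtained by a single post-order traversal in $O(n)$ time, after which $F(e)$, $w(T)=\sum_{e}|L(e)|w(e)$ and $u(T)=\sum_{e}|L(e)|u(e)$ follow in $O(n)$; hence line 1 costs $O(n)$ and lines 2--7 cost $O(1)$. Sorting the $F(e)$ in line 8 costs $O(n\log n)$. The loop of lines 10--19 runs $O(\log n)$ iterations, and each evaluates $f(k)$ and $f(k+1)$ via (\ref{eq-fx}), each a sum over all $n$ edges costing $O(n)$, so the loop costs $O(n\log n)$. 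Finally, line 20 computes the denominator $\sum_{j=k^*+1}^{n}|L(e_{m_j})|/c(e_{m_j})$ once and then evaluates (\ref{MCSDIPT_inf_w}) for every edge in $O(n)$. Summing gives $O(n)+O(n\log n)+O(n\log n)+O(n)=O(n\log n)$.

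The step I expect to need the most care is the per-iteration cost of the binary search: a naive evaluation recomputes $f$ from scratch in $O(n)$ each time, and one must verify this does not exceed the stated bound. Since there are only $O(\log n)$ iterations, the product is $O(n\log n)$, matching the cost of the sort, so the bound is safe. I would also remark that prefix sums of $F(e_{m_i})|L(e_{m_i})|/c(e_{m_i})$ and of $|L(e_{m_i})|/c(e_{m_i})$, precomputable in $O(n)$ after the sort, reduce each $f$-evaluation to $O(1)$; but this refinement is unnecessary, as the sorting step already dominates at $O(n\log n)$.
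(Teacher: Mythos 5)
Your proposal is correct and follows essentially the same route as the paper's own proof, which simply charges $O(n\log n)$ to the sorting step and $O(\log n)$ iterations (each an $O(n)$ evaluation of $f$ via (\ref{eq-fx})) to the binary search, concluding $O(n\log n)$ overall. Your additional material --- tying the algorithm's branches to the feasibility lemmas, using the monotonicity of $f$ and $f(n)=u(T)-w(T)$ to guarantee that $k^*$ exists, and the prefix-sum refinement --- is sound but goes beyond what the paper's brief proof records.
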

\begin{proof}
	The sorting in Line 8 can be completed in $O(n\log n)$ time.
	The binary search process in Lines 10-21 takes $O(\log n)$ iterations.
	So it takes a total of $O(n\log n)$ time. Therefore, 
	the problem (\textbf{MCSDIPT$_\infty$}) can be solved
	 in $O(n\log n)$ time. \hb
\end{proof}

\subsubsection{An $O(N n^2 )$ algorithm to solve \textbf{(MCSDIPTC$_\infty$)}}

For the original problem \textbf{(MCSDIPTC$_\infty$)},
we first consider its feasibility. 
We define 
$S(e):= \vert L(e) \vert ( u(e)-w(e))(e\in E)$, 
which represents the maximum increment
in SRD
that each edge can achieve.
We sort the edges in a non-increasing order of $S(e)$ and renumber them to obtain the following sequence:
\begin{equation}\label{eq-sort-Se}
	S({e_{\beta_{1}}}) \geq S({e_{\beta_{2}}}) \geq \dots \geq S({e_{\beta_{n-1}}}) \geq S({e_{\beta_{n}}})
\end{equation}
Under the cardinality constraint of $N$ edges, the maximum
total increment in
SRD is $S_N=\sum_{i=1}^{N}S({e_{\beta_{i}}})$.

If $S_N < \Delta D$, then it is clear that
the problem has no solution, as the increment $S_N$
is not sufficient to meet the problem requirements.
\begin{lemma}\label{lem-infea-MCSDIPTC}
	When $S_N < \Delta D$, problem \textbf{(MCSDIPTC$_\infty$)} is infeasible.
\end{lemma}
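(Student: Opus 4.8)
The plan is to argue by contradiction: I assume that problem (\textbf{MCSDIPTC$_\infty$}) admits a feasible solution $\tilde w$ and then show that $\tilde w(T)<D$, which violates the SRD constraint. Note that here the weighted $l_\infty$ cost is only the objective being minimized, not a constraint, so infeasibility can come solely from the inability to reach the target $D$ using at most $N$ edge modifications. The whole proof rests on a single upper bound for the SRD increment achievable under the cardinality constraint.

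First I would record the standard edge decomposition of the SRD increment. Writing each root--leaf weight as a sum over its edges and exchanging the order of summation (so that each edge $e$ contributes to exactly the leaves in $L(e)$) gives the identity
$$\tilde w(T)-w(T)=\sum_{e\in E}\vert L(e)\vert\,(\tilde w(e)-w(e)),$$
which is the same manipulation already used in the proof of Lemma \ref{lemma-sum=D}. Let $M:=\{e\in E:\tilde w(e)\ne w(e)\}$ be the set of edges actually modified by $\tilde w$. The Hamming constraint $\sum_{e\in E}H(\tilde w(e),w(e))\le N$ is exactly the statement $\vert M\vert\le N$, and every term with $e\notin M$ vanishes, so the increment collapses to a sum over $M$.

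Next I would bound each surviving term. For every $e\in M$ the box constraint $w(e)\le\tilde w(e)\le u(e)$ yields $\tilde w(e)-w(e)\le u(e)-w(e)$, hence $\vert L(e)\vert\,(\tilde w(e)-w(e))\le\vert L(e)\vert\,(u(e)-w(e))=S(e)$. Summing over $M$ gives $\tilde w(T)-w(T)\le\sum_{e\in M}S(e)$. Since $\vert M\vert\le N$ and the values $S(e)$ are arranged in non-increasing order as in (\ref{eq-sort-Se}), the sum of any at most $N$ of them cannot exceed the sum of the $N$ largest, so $\sum_{e\in M}S(e)\le\sum_{i=1}^{N}S(e_{\beta_i})=S_N$. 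Combining these estimates with the hypothesis $S_N<\Delta D=D-w(T)$ forces $\tilde w(T)-w(T)\le S_N<D-w(T)$, i.e.\ $\tilde w(T)<D$, contradicting the requirement $\sum_{t_k\in Y}\tilde w(P_k)\ge D$.

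I expect no serious obstacle here; the argument is a clean upper-bound estimate. The only point deserving care is the combinatorial step that the most profitable way to spend the modification budget is to upgrade the $N$ edges with largest $S(e)$ to their upper bounds. This is immediate because each edge contributes at most $S(e)$ to the increment independently of the others, so any choice of $\vert M\vert\le N$ modified edges is dominated by the top-$N$ choice recorded in $S_N$. Consequently no feasible $\tilde w$ can exist, and the problem is infeasible.
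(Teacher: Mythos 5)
Your proof is correct and follows exactly the reasoning the paper itself uses (the paper simply states the lemma as clear, since $S_N$ is by construction the maximum achievable SRD increment with at most $N$ upgraded edges); your write-up just makes that bound rigorous via the edge decomposition, the box constraint, and the top-$N$ domination argument. No gaps.
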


When $S_N \geq \Delta D$, we give the following lemma 
to determine the number of edges that are updated in
any optimal solution.

\begin{lemma}
Suppose $S_N \geq \Delta D$, and let $\tilde{w}$ be an optimal solution of problem \textbf{(MCSDIPTC$_\infty$)}. Let $R = \sum_{e\in E}H(w(e),u(e))$. Then, we have
$$
\sum_{e\in E}H(\tilde{w}(e),w(e)) = \min \{R,N\}.
$$
\end{lemma}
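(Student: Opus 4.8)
The plan is to prove the identity by establishing the two inequalities $\sum_{e\in E}H(\tilde w(e),w(e))\le \min\{R,N\}$ and $\sum_{e\in E}H(\tilde w(e),w(e))\ge \min\{R,N\}$ separately, working throughout in the nontrivial regime $w(T)<D$ (equivalently $\Delta D>0$), since the feasibility hypothesis $S_N\ge \Delta D$ together with $\Delta D>0$ forces $N\ge 1$ and $R\ge 1$, so that $\min\{R,N\}\ge 1$; the degenerate case $D\le w(T)$ (where $w$ itself is optimal) having already been separated off. The upper bound is immediate: the cardinality constraint directly caps the number of modified edges at $N$, while an edge with $w(e)=u(e)$ cannot be changed at all (the bound constraint then forces $\tilde w(e)=w(e)$), so every modified edge must lie among the $R$ upgradable edges; hence the number of modified edges is at most $\min\{R,N\}$.

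For the reverse inequality I would argue by contradiction, assuming an optimal $\tilde w$ modifies only $m:=\sum_{e\in E}H(\tilde w(e),w(e))<\min\{R,N\}$ edges, and then exhibit a strictly cheaper feasible solution. Let $M$ denote the set of edges modified by $\tilde w$, so $|M|=m$, and write $C^*:=C(\tilde w)=\max_{e\in E}c(e)(\tilde w(e)-w(e))$; since $\Delta D>0$ at least one edge is modified, so $C^*>0$ and the binding set $B:=\{e\in E:c(e)(\tilde w(e)-w(e))=C^*\}$ is nonempty. Because $m<R$, there is an upgradable edge $e_0$ that is not modified, i.e.\ $\tilde w(e_0)=w(e_0)<u(e_0)$; because $m<N$, enlarging the modified set by one still respects the cardinality constraint. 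The idea is then to shift a small amount of weight off the binding edges onto $e_0$: decrease $\tilde w(e)$ by a small $\delta_e>0$ for each $e\in B$ and increase $\tilde w(e_0)$ by $\delta_0:=\big(\sum_{e\in B}|L(e)|\delta_e\big)/|L(e_0)|$, which keeps the total SRD unchanged (and hence $\ge D$) because the SRD loss $\sum_{e\in B}|L(e)|\delta_e$ on the binding edges is exactly compensated by the gain $|L(e_0)|\delta_0$ on $e_0$.

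I would then verify that for all $\delta_e$ small enough this perturbed vector $w'$ is feasible and strictly cheaper. Bound feasibility holds since each binding edge still satisfies $w(e)<w'(e)<\tilde w(e)\le u(e)$ (so it remains modified), while $w(e_0)<w'(e_0)=w(e_0)+\delta_0\le u(e_0)$; the modified set is then exactly $M\cup\{e_0\}$ of size $m+1\le N$, so the cardinality constraint holds; and the SRD constraint holds by the compensation above. For the cost, every binding edge now has cost $C^*-c(e)\delta_e<C^*$, the edge $e_0$ has cost $c(e_0)\delta_0$, and all remaining edges keep their original cost, which was strictly below $C^*$; choosing the $\delta_e$ small enough that $c(e_0)\delta_0<C^*$ forces $C(w')<C^*$, contradicting the optimality of $\tilde w$. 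Hence $m\ge \min\{R,N\}$, which together with the upper bound yields the claimed equality.

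The main obstacle is the exchange step: I must shift load off \emph{all} binding edges simultaneously, since reducing only one of them does not lower the maximum when several edges tie at $C^*$, while keeping the single receiving edge $e_0$ both below the threshold $C^*$ and within its upper bound $u(e_0)$. The delicate point is confirming that a uniformly small perturbation makes the new maximum cost \emph{strictly} smaller than $C^*$ rather than merely equal to it; this is exactly where the strict gap between $C^*$ and the second-largest cost level among the edges, together with the freedom to let $\delta_0\to 0$, are used.
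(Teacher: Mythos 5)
Your proof is correct, and its core step---perturbing an optimal solution by raising an unmodified upgradable edge $e_0$ while proportionally lowering all binding (maximum-cost) edges so that the SRD is preserved and the maximum cost strictly drops---is exactly the exchange argument the paper uses. The difference is in how the two cases are organized. The paper splits on whether $R\le N$ or $R>N$: for $R>N$ it runs the same perturbation you describe, but for $R\le N$ it discards the cardinality constraint, reduces to the sub-problem (\textbf{MCSDIPT$_\infty$}), and reads off from the explicit unique optimal solution in Theorem \ref{MCSDIPT_inf_theorem} that all $R$ upgradable edges get modified. You instead combine the trivial upper bound $\sum_{e\in E}H(\tilde w(e),w(e))\le\min\{R,N\}$ with a single exchange argument covering both cases, which makes the proof uniform and independent of that structure theorem. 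Two smaller points where your write-up is tighter than the paper's: you make the standing assumption $\Delta D>0$ explicit (without it the lemma is false as literally stated, since $\tilde w=w$ is then optimal and modifies no edges), and you insist on the strict inequality $c(e_0)\delta_0<C^*$, whereas the paper's choice $0<\varepsilon\le C(\tilde w)/c(e_i)$ permits equality, in which case its claimed strict decrease of the cost can fail; your ``$\delta$ small enough'' phrasing also covers the bound-feasibility of the lowered binding edges, which the paper leaves implicit.
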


\begin{proof}
	(1) When $R \le N$, it means that the cardinality constraint is always satisfied. So the problem 
	\textbf{(MCSDIPTC$_\infty$)} can be directly transformed to its sub-problem (\textbf{MCSDIPT$_\infty$}).
	According to Theorem \ref{MCSDIPT_inf_theorem}, all $R$ edges will be updated, so $\sum_{e\in E}H(\tilde{w}(e),w(e))=R$ in this case.
	
	(2) When $R>N$, we prove 
	$\sum_{e\in E}H(\tilde{w}(e),w(e))=N$ by contradiction.
	Suppose that
	$\sum_{e\in E}H(\tilde{w}(e),w(e))<N$. Then, there is an edge $e_i$ such that $\tilde w(e_i)=w(e_i)<u(e_i)$. We can construct a solution $
	\hat{w}(e)=    
	\begin{cases}
		\tilde{w} (e),  & e \ne e_i\\
		w(e)+ \varepsilon,  & e=e_i
	\end{cases}   
	$,
	where $0<  \varepsilon  \le  \min  \left \{   \frac{ C(\tilde w)}{c(e_i)},u(e_i)-w(e_i)) \right \}  $. Then $C(\hat w) \leq C(\tilde w)$ and thus
	\begin{eqnarray}
		&& \hat w(T)= \sum_{t_k \in Y}{\hat {w}}(P_k)= \sum_{e \in E} \vert L(e) \vert \hat {w}(e)\nonumber\\
		&=&\sum_{e \in E\backslash\{e_i\}} \vert L(e) \vert \tilde {w}(e) + \vert L(e_i) \vert  (\tilde {w}(e_i)+ \varepsilon) \nonumber\\
		&=&\sum_{e \in E} \vert L(e) \vert \tilde {w}(e) + \vert L(e_i) \vert \cdot  \varepsilon \nonumber\\
		&=&\tilde{w}(T)+ \vert L(e_i) \vert \cdot  \varepsilon .\label{eq-temp0}
	\end{eqnarray}
	
	Let $\tilde{E} =\left \{ e \vert c(e) (\tilde{w}(e)-w(e) )=C(\tilde w)\right \}$, we can construct a solution
	$$  \bar {w}(e)=    
	\begin{cases}
		\hat{w} (e),  & e \notin  \tilde{E}\\
		\hat{w}(e)-\frac{ \varepsilon   \vert L(e_i) \vert  }{\sum_{e_j \in \tilde{E} } \vert L(e_j) \vert }, &e\in \tilde{E}
	\end{cases}$$
	then we have
	\begin{eqnarray*}
		&&\bar w(T) =\sum_{t_k \in Y}{\bar {w}}(P_k)=\sum_{t_k \in Y}{\hat {w}}(P_k)-\frac{ \varepsilon   \vert L(e_i) \vert  }{\sum_{e_j \in \tilde{E} } \vert L(e_j) \vert }\sum_{e \in \tilde{E} } \vert L(e) \vert \\
		&=&\hat {w}(T) - \varepsilon   \vert L(e_i) \vert  =\tilde{w}(T) \ge D,
	\end{eqnarray*}
	where the last equality can be obtained from (\ref{eq-temp0}).
	Furthermore,  note that  
	$$\sum_{e \in E}H(\bar w(e),w(e))=\sum_{e \in E}H(\tilde w(e),w(e)) +1 \le N,$$
	then $\bar{w}$ is a feasible solution.
	However,  it follows from the definitions of $ \hat w$ and $ \bar w$ that  $C(\tilde{w} )>C(\bar w)$, which contradicts the optimality of
	$\tilde{w}$. \hb
\end{proof}

Based on the analysis above, if we can determine the set $\bar E^*$ of $N$
edges that are updated in the optimal solution, then the problem \textbf{(MCSDIPTC$_\infty$)} can 
be transformed into its sub-problem \textbf{(MCSDIPT$_\infty$)}
which can be solved by Algorithm \ref{MCSDIPT_inf_alg} in $O(n \log n)$ time.
Next, we use an iterative method to determine the set $\bar E^*$.

Initialization: $\tau:=0$, $\bar E^{\tau}:=\{e_{\beta_1}, e_{\beta_2}, \cdots, e_{\beta_N}\}$.

Call Algorithm \ref{MCSDIPT_inf_alg}:
$[\bar{w}^\tau,C(\bar{w}^\tau)]$=MCSDIPT$_{\infty}(T,n,w,u^\tau,c,D)$,
where $u^{\tau}(e)$ $=\left\{\begin{array}{lr}u(e),&e\in \bar E^{\tau}\\ w(e), &\text{otherwise} \end{array}\right.$,
then the obtained $(\bar{w}^\tau, C(\bar{w}^\tau))$ is the initial feasible solution and the corresponding objective value of the problem.

Next, for each edge in the current edge set $\bar{E}^\tau$,
consider whether there exists a better edge in the set $E\backslash \bar E^{\tau}$ that can reduce the upgrade cost by replacing it.

Sort the values $\nu (e):=\frac{ \vert L(e) \vert }{c(e)}$ in a non-increasing order such that
\begin{eqnarray} \label{sequence_gamma}
	\nu (e_{\gamma_{1}}) \ge \nu (e_{\gamma_{2}}) \ge \dots \ge \nu (e_{\gamma_{n}}).
\end{eqnarray}

Let $S^\tau(e):=  \min\{S(e),\nu(e) C(\bar w^{\tau})\}$
denote the maximum SRD increment of 
edge $e$ within the  maximum cost
$C(\bar w^{\tau})$.

Let $\bar{E}^\tau_{=}:=\left \{ e\in E \vert c(e)(\bar{w}^\tau(e)-w(e))=C(\bar{w}^\tau)\right \}$ 
be the set of edges whose costs reach  $C(\bar w^{\tau})$.
For edge $e_i \in \bar{E}^{\tau}$ and edge $e_j \in E \backslash \bar{E}^{\tau}$, 
where $j$ is traversed according to the sequence $\left \{ {\gamma_n} \right \} $, 
there are two replacement scenarios as follows.

(1) If $e_i\in \bar{E}^\tau_{=}$, 
$S^\tau(e_j)=  \min\{S(e_j),\nu (e_j)C(\bar w^{\tau})\}\geq 
\nu (e_i)C(\bar w^{\tau})=S^\tau(e_i) $
and $\nu (e_i)<\nu (e_j)$. 
Then $e_i$ is replaced with 
$e_j$ and update $\bar{E}^{\tau}=\bar{E}^{\tau}\backslash{\left \{ e_i \right \} }\cup{\left \{ e_j \right \} }$.

(2) If $e_i \in \bar{E}^\tau \setminus \bar{E}^\tau_{=}$ and $S^\tau(e_j)>S^\tau(e_i)$,
then $e_i$ is replaced with $e_j$ and 
$\bar{E}^{\tau}=\bar{E}^{\tau}\backslash{\left \{ e_i \right \} }\cup{\left \{ e_j \right \} }$.

After the replacement is done for all $e_i \in \bar E ^{\tau}$and $e_j \in E\setminus \bar E ^{\tau}$, 
set $\bar E^{\tau+1}=\bar E^{\tau}$, the first iteration is 
finished.

In the second iteration, let  $u^{\tau+1}(e)=\left\{\begin{array}{lr}u(e),e\in \bar E^{\tau+1}\\ w(e), \text{otherwise} \end{array}\right.,$ and call
Algorithm \ref{MCSDIPT_inf_alg}: $[\bar{w}^{\tau+1},C(\bar{w}^{\tau+1})]$=MCSDIPT$_{\infty}(T,n,w,u^{\tau+1},c,D)$.
The resulting $\bar{w}^{\tau+1}$ is a better feasible solution to problem \textbf{(MCSDIPTC$_\infty$)} than $\bar{w}^{\tau} $, with a corresponding objective value of $C(\bar{w}^{\tau})$. 

\begin{theorem} \label{MCSDIPTC_theorem_1}
	The solution $\bar{w}^{\tau+1}$ is feasible to problem 
	\textbf{(MCSDIPTC$_\infty$)}, 
	and $C(\bar{w}^\tau)\geq C(\bar{w}^{\tau+1})$.
\end{theorem}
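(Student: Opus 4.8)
The plan is to prove the two claims separately, with both resting on a single bookkeeping quantity: the total \emph{capped} increment $\sum_{e\in \bar E}S^\tau(e)$ of a candidate edge set $\bar E$, where $S^\tau(e)=\min\{S(e),\nu(e)C(\bar w^\tau)\}$ is evaluated with the threshold $C:=C(\bar w^\tau)$ held fixed. First I would record the meaning of $S^\tau(e)$: it is exactly the largest SRD increment that a single edge $e$ can contribute while its own upgrade cost $c(e)(\tilde w(e)-w(e))$ stays within the budget $C$, since raising $e$ by $C/c(e)$ yields increment $\nu(e)C$ while raising it to $u(e)$ yields $S(e)$. Consequently, for \emph{any} edge set $\bar E$, the minimum $l_\infty$-cost needed to reach $\bar w(T)=D$ using only edges of $\bar E$ is at most $C$ precisely when $\sum_{e\in \bar E}S^\tau(e)\ge \Delta D$; this identity is the bridge between the discrete replacement rule and the continuous subproblem solved by Algorithm \ref{MCSDIPT_inf_alg}.

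Second I would show $\sum_{e\in \bar E^\tau}S^\tau(e)\ge \Delta D$. By Lemma \ref{lemma-sum=D} the solution $\bar w^\tau$ satisfies $\bar w^\tau(T)=D$, so its total increment equals $\Delta D$. Each edge $e\in \bar E^\tau$ has cost $c(e)(\bar w^\tau(e)-w(e))\le C$ and obeys $\bar w^\tau(e)\le u(e)$, so its increment $|L(e)|(\bar w^\tau(e)-w(e))$ is bounded above by both $\nu(e)C$ and $S(e)$, hence by $S^\tau(e)$. Summing over $\bar E^\tau$ gives $\Delta D=\sum_{e\in \bar E^\tau}|L(e)|(\bar w^\tau(e)-w(e))\le \sum_{e\in \bar E^\tau}S^\tau(e)$, as required.

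Third I would track the replacement loop, keeping $C$ (and therefore $S^\tau$) fixed throughout the $\tau$-th iteration. Each admissible swap replaces some $e_i\in\bar E^\tau$ by an $e_j\in E\setminus\bar E^\tau$ with $S^\tau(e_j)\ge S^\tau(e_i)$: in scenario (1) this is the displayed inequality $S^\tau(e_j)\ge \nu(e_i)C=S^\tau(e_i)$, and in scenario (2) it is the strict condition $S^\tau(e_j)>S^\tau(e_i)$. Hence no individual swap decreases the running sum $\sum_{e\in\text{current set}}S^\tau(e)$, and since one-for-one swapping preserves $|\bar E^\tau|=N$, the final set satisfies $\sum_{e\in \bar E^{\tau+1}}S^\tau(e)\ge \sum_{e\in \bar E^\tau}S^\tau(e)\ge \Delta D$.

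Finally I would cash this in. Because $S(e)\ge S^\tau(e)$, we obtain $u^{\tau+1}(T)-w(T)=\sum_{e\in \bar E^{\tau+1}}S(e)\ge \Delta D$, so $u^{\tau+1}(T)\ge D$ and the call to Algorithm \ref{MCSDIPT_inf_alg} returns a genuine optimal solution $\bar w^{\tau+1}$ attaining $\bar w^{\tau+1}(T)=D$, with at most $N$ modified edges (only those in $\bar E^{\tau+1}$, since $u^{\tau+1}$ equals $w$ elsewhere) and $w\le \bar w^{\tau+1}\le u$; thus $\bar w^{\tau+1}$ is feasible for \textbf{(MCSDIPTC$_\infty$)}. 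For the cost inequality, the bridge of the first step combined with $\sum_{e\in \bar E^{\tau+1}}S^\tau(e)\ge \Delta D$ exhibits a weighting supported on $\bar E^{\tau+1}$ that attains $\bar w(T)=D$ at cost at most $C=C(\bar w^\tau)$; as $\bar w^{\tau+1}$ minimizes cost over exactly this family, $C(\bar w^{\tau+1})\le C(\bar w^\tau)$. The main obstacle I anticipate is justifying the bridge rigorously — that $S^\tau(e)$ is precisely the per-edge budget-$C$ capacity and that these capacities add across an edge set to the achievable increment — since it is this identity, not any lone inequality, that couples the combinatorial swap criterion to the value returned by the continuous routine; some care is also needed to confirm that $S^\tau$ and $C$ are frozen during the loop so that the telescoping of swaps stays valid.
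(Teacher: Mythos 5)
Your proof is correct, and while your feasibility argument essentially coincides with the paper's (bound each edge's increment under $\bar{w}^\tau$ by $S^\tau(e)$, use Lemma \ref{lemma-sum=D} to get $\sum_{e\in\bar{E}^\tau}S^\tau(e)\ge\Delta D$, and observe that every admissible swap preserves this sum — your telescoping over several swaps within one iteration is if anything more careful than the paper's ``without loss of generality'' treatment of a single swap), your proof of $C(\bar{w}^\tau)\ge C(\bar{w}^{\tau+1})$ takes a genuinely different route. The paper argues by contradiction with a case analysis on whether $e_i\in\bar{E}^\tau_{=}$: assuming $C(\bar{w}^\tau)<C(\bar{w}^{\tau+1})$, it squeezes $S^\tau(e_i)\ge S^{\tau+1}(e_j)\ge S^\tau(e_j)$ against the swap condition to force equalities, concludes that $\bar{w}^{\tau+1}$ agrees with $\bar{w}^\tau$ off $\{e_i,e_j\}$, and then contradicts $\nu(e_i)<\nu(e_j)$ in case (1) or $S^\tau(e_j)>S^\tau(e_i)$ in case (2); this leans on the fine structure of the subproblem's optimal solution (every upgraded edge sitting exactly at its cap $S^\tau(e)$, plus uniqueness). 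You instead produce a direct certificate: the explicit vector $\tilde{w}(e)=w(e)+\min\{u(e)-w(e),\,C(\bar{w}^\tau)/c(e)\}$ for $e\in\bar{E}^{\tau+1}$ and $\tilde{w}(e)=w(e)$ elsewhere is feasible for the subproblem with bounds $u^{\tau+1}$ (its total increment is $\sum_{e\in\bar{E}^{\tau+1}}S^\tau(e)\ge\Delta D$) and costs at most $C(\bar{w}^\tau)$, so the minimality of the value returned by Algorithm \ref{MCSDIPT_inf_alg} yields $C(\bar{w}^{\tau+1})\le C(\bar{w}^\tau)$ at once; the ``bridge'' you worried about is needed only in this one easy direction, so it is not a real obstacle. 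Your route buys brevity and robustness — it uses only $S^\tau(e_j)\ge S^\tau(e_i)$, never the tie-breaking condition $\nu(e_i)<\nu(e_j)$, and never the uniqueness or cap structure of the optimum — whereas the paper's longer equality-chasing argument has the side benefit of making visible exactly where that $\nu$-condition in the replacement rule bites, which is the ingredient echoed later in the optimality analysis of Algorithm \ref{MCSDIPTC_inf_alg}.
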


\begin{proof}
	Without loss of generality, let $\bar{E}^{\tau+1}=\bar{E}^{\tau} \setminus \left \{ e_i \right \} \cup  \left \{ e_j \right \},$
	where $e_i\in \bar E^{\tau}$ and $e_j\in E\backslash \bar E^{\tau}$.
	
	\textbf{(1) If $e_i \in \bar{E}^\tau_=$}, then 
	\begin{eqnarray}&& S^\tau(e_j)\geq S^\tau(e_i),\label{th19-pf-case1-1}\\
&&\nu (e_i)<\nu (e_j).
\label{th19-pf-case1-2}\end{eqnarray}
	
	\textbf{(1.1) Use induction to prove the feasibility of $\bar w^{\tau+1}$.} 
	First, since $\sum_{e \in \bar{E}^{0}} S(e) =S_N \geq D$, the vector $\bar{w} ^0$ is a feasible solution to the original problem.
	
	Assume that the feasibility holds when $\tau=k$ then
	$$\sum_{e \in \bar{E}^{\tau+1}} S(e) \ge \sum_{e \in \bar{E}^{\tau+1}} S^\tau (e)\ge \sum_{e\in \bar{E}^\tau }S^\tau(e)=\sum_{e\in \bar{E}^\tau\backslash{\left \{ e_i \right \} }}S^\tau(e)+S^\tau(e_i)\ge \Delta D.$$ 
	which means that $u^\tau (T)-w(T) \ge \Delta D$ and $u^\tau(T ) \ge D$.
	Thus we can get a feasible solution after calling   Algorithm \ref{MCSDIPT_inf_alg}, which satisfies $\bar w ^{\tau +1}(T) \ge D$.
	Furthermore, $ \vert \bar{E}^{\tau+1} \vert = \vert \bar{E}^{\tau} \vert =N$, and for any edge $e\in E$, $w(e)\leq \bar w^{\tau+1}(e)\leq u(e)$,
	so $\bar w^{\tau+1}$ is a feasible solution to problem \textbf{(MCSDIPTC$_\infty$)}.
	
	\textbf{(1.2) We show $C(\bar{w}^\tau) \ge C(\bar{w}^{\tau+1})$ by contradiction.}
	Assume $C(\bar{w}^{\tau}) < C(\bar{w}^{\tau+1})$. Then,
	$ \sum_{e \in \bar{E}^{\tau} \setminus  \left \{ e_i \right \} } S^{\tau}(e) \le \sum_{e \in \bar{E}^{\tau+1} \setminus  \left \{ e_j \right \} } S^{\tau+1}(e)$.
	Also, by Lemma \ref{lemma-sum=D}, we have
	$\sum_{e \in \bar{E}^{\tau}} S^\tau(e)= \Delta D=\sum_{e \in \bar{E}^{\tau+1}} S^{\tau+1}(e)$
	which implies that
	
	$$\sum_{e \in \bar{E}^{\tau} \setminus  \left \{ e_i \right \} } S^\tau(e)+S^\tau(e_i)=\sum_{e \in \bar{E}^{\tau+1} \setminus  \left \{ e_j \right \} } S^{\tau+1}(e)+S^{\tau+1}(e_j)$$
	
	Therefore,
	\begin{eqnarray} 
		S^{\tau}(e_i) \ge S^{\tau+1}(e_j) \ge S^{\tau}(e_j)\label{th19-pf-case1.2-1}
	\end{eqnarray}
	
	Combining equations (\ref{th19-pf-case1-1}) and (\ref{th19-pf-case1.2-1}), we have
	\begin{eqnarray}
		S^{\tau}(e_i)=S^{\tau}(e_j)= S^{\tau+1}(e_j),\label{th19-pf-case1.2-2}
	\end{eqnarray}
	which implies that
	$\sum_{e \in \bar{E}^{\tau} \setminus  \left \{ e_i \right \} } S^{\tau}(e) = \sum_{e \in \bar{E}^{\tau+1} \setminus  \left \{ e_j \right \} } S^{\tau+1}(e)$.
	
	Then, for any edge $e \in \bar{E}^{\tau} \setminus  \left \{ e_i \right \}$,
	$\bar w^{\tau+1}(e)= \bar w^\tau(e)$, combined with the assumption $C(\bar{w}^{\tau}) < C(\bar{w}^{\tau+1})$, we have
	$$C(\bar w^\tau)=c(e_i)(\bar w^\tau (e_i)-w(e_i))<c(e_j)(\bar w^{\tau+1}(e_j)-w(e_j))=C(\bar w^{\tau +1}).$$ 
	It follows from (\ref{th19-pf-case1.2-2}) that
	$$ \nu(e_i)C(\bar w^{\tau})= S^{\tau}(e_i)=S^{\tau+1}(e_j)=  \nu(e_j)C(\bar w^{\tau+1}), $$
	which implies that
	$$\nu(e_i) = \nu(e_j)\frac{C(\bar w^{\tau+1})}{C(\bar w^{\tau})}.$$
	Therefore, we have $\nu (e_i) > \nu (e_j)$, which contradicts equation (\ref{th19-pf-case1-2}).

	\textbf{(2) If $e_i \notin \bar{E}^\tau_=$}, then $S^\tau(e_j)>S^\tau(e_i)$. Using the same argument as \textbf{(1.1)}, we can prove that $\bar w^{\tau+1}$ is a feasible solution for problem \textbf{(MCSDIPTC$_\infty$)}.
	Next, we show $C(\bar{w}^\tau) \ge C(\bar{w}^{\tau+1})$ by contradiction.
	Similar to \textbf{(1.2)}, we have equation (\ref{th19-pf-case1.2-1}).
	However, this contradicts the fact that $S^\tau(e_j)>S^\tau(e_i)$. \hb
\end{proof}

Then, update $\tau:=\tau+1$ and continue iterating until there is no edge 
in $E \setminus \bar E^\tau$ better than any edge in the current set $\bar E^\tau $.
Denote the resulting set of edges by $\bar E^*$.
Let $u^{*}(e)=\left\{\begin{array}{lr}u(e),e\in \bar E^{*}\\
		w(e), \text{otherwise} \end{array} \right..$ Then, call Algorithm \ref{MCSDIPT_inf_alg}:
\begin{eqnarray}\label{os-1}
	[\tilde w,C(\tilde w)]=MCSDIPT_{\infty}(T,n,w,u^*,c,D),
\end{eqnarray}

\begin{theorem}
	The solution $\tilde w$ obtained by (\ref{os-1}) is an optimal solution to problem \textbf{(MCSDIPTC$_\infty$)}.
\end{theorem}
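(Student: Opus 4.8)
The plan is to recast (\textbf{MCSDIPTC$_\infty$}) as a best‑$N$‑subset problem and then show that the locally optimal set $\bar E^*$ returned by the iteration is globally optimal. For a fixed updatable set $\bar E$, solving (\textbf{MCSDIPT$_\infty$}) with the corresponding $u^\tau$‑type bounds amounts to finding the smallest cost $C$ at which the achievable increment $g_{\bar E}(C):=\sum_{e\in\bar E}\min\{S(e),\nu(e)C\}$ reaches $\Delta D$, where $\nu(e)=|L(e)|/c(e)$; write $C^*(\bar E)$ for this minimal cost. By Lemma \ref{lemma-sum=D} the subproblem optimum meets the bound exactly, so $g_{\bar E}(C^*(\bar E))=\Delta D$, and since $g_{\bar E}$ is continuous and non‑decreasing in $C$, the implication $g_{\bar E'}(C)\le\Delta D\Rightarrow C^*(\bar E')\ge C$ holds away from saturation. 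The lemma on the number of updated edges shows that in the nontrivial case $R>N$ every optimum updates exactly $N$ edges, so the problem is precisely $\min\{C^*(\bar E):|\bar E|=N,\ \sum_{e\in\bar E}S(e)\ge\Delta D\}$; the case $R\le N$ collapses to (\textbf{MCSDIPT$_\infty$}) and is settled by Theorem \ref{MCSDIPT_inf_theorem}. It therefore suffices to prove that $\bar E^*$ attains this minimum.

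First I would record that the iteration halts. Theorem \ref{MCSDIPTC_theorem_1} already gives that each swap preserves feasibility and does not increase the cost, so $C(\bar w^\tau)$ is non‑increasing. A type‑(2) swap has $S^\tau(e_j)>S^\tau(e_i)$, which strictly increases the increment available at the current cost and hence strictly lowers the required cost; a type‑(1) swap may leave the cost unchanged but then replaces $e_i$ by some $e_j$ with $\nu(e_j)>\nu(e_i)$. Thus the lexicographic potential $(C(\bar w^\tau),-\sum_{e\in\bar E^\tau}\nu(e))$ strictly decreases at every swap, and as there are finitely many $N$‑subsets no set is revisited and the process terminates at a set $\bar E^*$ admitting no improving swap.

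The crux is to translate local optimality into a pointwise domination at the final cost $C^*:=C(\tilde w)$. Setting $S^*(e):=\min\{S(e),\nu(e)C^*\}$, I claim $S^*(e_j)\le S^*(e_i)$ for all $e_i\in\bar E^*$, $e_j\in E\setminus\bar E^*$. If $e_i\notin\bar E^*_{=}$, the absence of a type‑(2) swap gives $S^*(e_j)\le S^*(e_i)$ directly. If $e_i\in\bar E^*_{=}$, then $e_i$ is used at full cost, so $S^*(e_i)=\nu(e_i)C^*$, and the absence of a type‑(1) swap means every outside $e_j$ satisfies $S^*(e_j)<S^*(e_i)$ or $\nu(e_j)\le\nu(e_i)$; in the latter alternative the elementary bound $S^*(e_j)\le\nu(e_j)C^*\le\nu(e_i)C^*=S^*(e_i)$ again yields the inequality. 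This small observation — that $S^*(e)\le\nu(e)C^*$ converts the $\nu$‑tie‑break into an inequality on $S^*$, making the $\nu$‑clause of the swap rule exactly the right one — is the step I expect to be the main obstacle.

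Having the domination, $\bar E^*$ consists of $N$ edges with the largest $S^*$‑values, so $\sum_{e\in\bar E'}S^*(e)\le\sum_{e\in\bar E^*}S^*(e)=g_{\bar E^*}(C^*)=\Delta D$ for any $N$‑subset $\bar E'$, the last equality coming from Lemma \ref{lemma-sum=D} applied to $\tilde w$. When some edge of $\bar E'$ is still unsaturated at $C^*$, monotonicity then forces $C^*(\bar E')\ge C^*$. The one delicate case is that every edge of $\bar E'$ saturates at a cost below $C^*$; there the forced equality of the symmetric‑difference $S^*$‑values together with $F(e_j)<C^*$ gives $\nu(e_j)=S^*(e_j)/F(e_j)>S^*(e_i)/C^*=\nu(e_i)$ for a cost‑tight $e_i\in\bar E^*\setminus\bar E'$ and its matching $e_j\in\bar E'$, so a type‑(1) swap would have been available, contradicting local optimality. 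Hence $C^*(\bar E')\ge C^*$ for every feasible $N$‑subset, and since $\tilde w$ updates exactly $N$ edges and meets $D$ at cost $C^*$, it is an optimal solution of (\textbf{MCSDIPTC$_\infty$}).
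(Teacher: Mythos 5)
Your proof is correct, but it takes a genuinely different route from the paper's. The paper argues by directly comparing $\tilde w$ with a putative better solution $\hat w$: it splits the modified edges into $E_J=\tilde E_N\cap\hat E_N$ and the symmetric-difference parts $\tilde E',\hat E'$, derives the increment inequality (\ref{eq-th20-2}), and in both the strict and the tight case exhibits a pair $(e_a,e_b)$ that would have triggered a swap, contradicting termination; its tight case (2.2) is handled quite informally (``$\hat E'$ is better than $\tilde E'$, so there must be at least one edge\dots''), and its case (1) needs the uniqueness assertion of Theorem \ref{MCSDIPT_inf_theorem}. You instead recast the problem as minimizing $C^*(\bar E)$ over feasible $N$-subsets, prove an explicit domination lemma from the no-swap condition ($S^*(e_j)\le S^*(e_i)$ for every $e_i\in\bar E^*$, $e_j\notin\bar E^*$, where the identity $S^*(e_i)=\nu(e_i)C^*$ for cost-tight edges converts the $\nu$-tie-break into the needed inequality), and then bound $\sum_{e\in\bar E'}S^*(e)\le\Delta D$ for every competitor $\bar E'$, splitting on whether some edge of $\bar E'$ is unsaturated at $C^*$. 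This buys three things the paper does not have: a termination argument (the lexicographic potential), a clean quantitative statement of ``no outside edge can replace an inside edge,'' and a rigorous treatment of the tie case via $\nu(e_j)=S^*(e_j)/F(e_j)>S^*(e_i)/C^*=\nu(e_i)$ --- exactly where the paper is vaguest; moreover your subset formulation disposes of the case $\bar E'=\bar E^*$ without invoking uniqueness. The paper's route is shorter and stays at the level of weight vectors.

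One assertion you should justify: in the delicate case you invoke ``a cost-tight $e_i\in\bar E^*\setminus\bar E'$'' without arguing that one exists. It does: in that case every $e\in\bar E'$ satisfies $F(e)\le C^*(\bar E')<C^*$, while any edge attaining the maximum cost of $\tilde w$ satisfies $F(e)\ge C^*>0$ (such an edge exists since $\Delta D>0$); hence every cost-attaining edge of $\bar E^*$ lies outside $\bar E'$, i.e.\ in $\bar E^*\setminus\bar E'$, and the non-empty symmetric difference supplies the matching $e_j\in\bar E'\setminus\bar E^*$ with equal $S^*$-value. With that line added, your argument is complete.
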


\begin{proof}
	According to Theorem \ref{MCSDIPTC_theorem_1}, the iteratively obtained solution $\tilde w$ is feasible.
	
	Assume that $\tilde w$ is not optimal, but $\hat w$ is, such that 
	$\max_{e \in E}c(e)(\hat{w}(e)-w(e))<\max_{e \in E}c(e)(\tilde{w}(e)-w(e))$. 
	
	Let $\hat{E}_N= { \left \{ e\in E \mid \hat{w}(e) \neq w(e) \right \}  }$,
	$\tilde{E}_N= { \left \{ e\in E \mid \tilde{w}(e) \neq w(e)  \right \} }$,
	$\tilde{E}'= \tilde{E}_N \backslash\hat{E}_N$,
	$\hat{E}'= \hat{E}_N \backslash \tilde{E}_N$,
	and $E_J=\tilde{E}_N \cap \hat{E}_N$.
	\begin{figure}[!htbp]
		\centering
		\includegraphics[width=.4\linewidth]{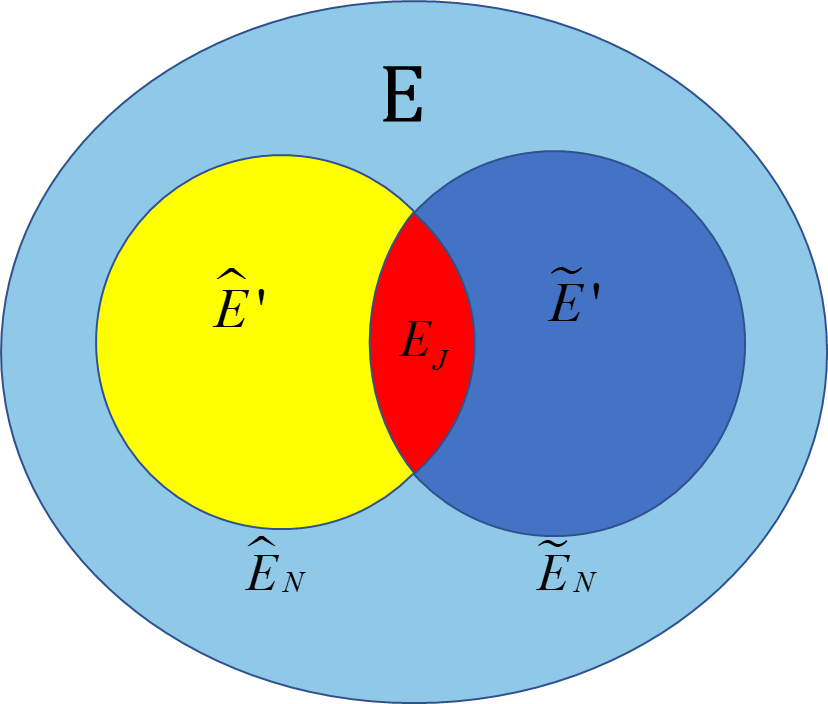}
		\caption{Inter-set relations}\label{graph_3}
	\end{figure}
	
	(1) If $\hat{E}_N =\tilde{E}_N$, then by the uniqueness of the optimal solution of the problem
	(\textbf{MCSDIPT$_\infty$}), we have $\hat{w}=\tilde{w}$, which 
	contradicts the assumption.
	
	(2) If $\hat{E}_N \neq \tilde{E}_N$, since 
	$C(\hat{w})=\max_{e \in E}c(e)(\hat{w}(e)-w(e))<\max_{e \in E}c(e)(\tilde{w}(e)-w(e))=C(\tilde{w})$, 
	we have $\max_{e \in E_J}c(e)(\hat{w}(e)-w(e))\leq \max_{e \in E_J}c(e)(\tilde{w}(e)-w(e))$.
	Therefore, the total increment in SRD on $E_J$ by $\hat{w}$ should be no more than that by $\tilde{w}$ and $\sum_{e \in E_J} \vert L(e) \vert  (\hat{w}(e)-w(e)) \leq \sum_{e \in E_J} \vert L(e) \vert  (\tilde{w}(e)-w(e))$. Moreover, since $\sum_{t_k \in Y}\hat{w}(P_k) \geq D$ and $\sum_{t_k \in Y}\tilde{w}(P_k) = D$, we have
	\begin{eqnarray}\sum_{e \in \hat{E}'} \vert L(e) \vert  (\hat{w}(e)-w(e)) \ge \sum_{e \in \tilde{E}'} \vert L(e) \vert  (\tilde{w}(e)-w(e)).\label{eq-th20-2}\end{eqnarray}
	
	(2.1) If ``$>$'' holds in equation (\ref{eq-th20-2}), then there exist
	$e_a\in \hat{E}'$ and $e_b\in \tilde{E}'$ such that
	$ \vert L(e_a) \vert (\hat{w}(e_a)-w(e_a))> \vert L(e_b) \vert (\tilde{w}(e_b)-w(e_b))$, 
	which implies that $\hat{S}(e_a)= \vert L(e_a) \vert \min\big\{u(e_a)-w(e_a),\frac{C(\hat{w})}{c(e_a)} \big\} 
	> \vert L(e_b) \vert \min\big\{u(e_b)-w(e_b),\frac{C(\tilde{w})}{c(e_b)} \big\} =\tilde{S}(e_b)$.
	
	From the replacement case (2) of the iteration $\bar{E}^\tau$, it can be concluded that at the end of the 
	iteration, for each edge in $\bar{E}$ under $\tilde{w}$, there is no edge in $E\setminus \bar{E}$ 
	that can replace any edge in $\bar E$. However, the contradiction arises from the fact that $\tilde{S}(e_a)\ge \hat S(e_a )>\tilde{S}(e_b)$ and edge $e_a$ satisfies the replacement requirement of edge $e_b$.
	
	(2.2) If the equality holds in   (\ref{eq-th20-2}), it means that the two edge weight 
	vectors achieve the same SRD on the same edge set $E_J$. Due to the uniqueness of the 
	optimal solution of the problem (MCSDIPT$_\infty$), it follows that
	SRD increment by each edge in $E_J$ must be equal.
	Therefore, the maximum cost edges of the two schemes are in $\hat E'$ and $\tilde E'$, respectively. 
	Moreover, since $C(\hat{w})<C(\tilde{w})$, it follows that $\hat E'$ is better than $\tilde E'$, and there must be at least one edge $e_a \in \hat{E}',e_b \in \tilde{E}'$,
	such that $\tilde{S}(e_a)>\tilde{S}(e_b)$ and edge $e_a$ satisfies the replacement requirement of edge $e_b$, which leads to a contradiction.
	
	Therefore, $\tilde{w}$ is the optimal solution to the original problem. \hb
\end{proof}

The following Algorithm \ref{MCSDIPTC_inf_alg} can be obtained from the above analysis to solve the problem
(MCSDIPT$_\infty$).
\begin{algorithm}[!h]
	\caption{$\left [\tilde{w},C(\tilde{w} ) \right ]:=$MCSDIPTC$_{\infty}(T,n,w,u,c,D,N)$}
	\label{MCSDIPTC_inf_alg}
	\small
	\begin{algorithmic}[1]
		\REQUIRE A rooted tree $T(V,E)$, the number $n$ of edges, the weight vector $w$, an upper bound vector $u$, 
		a cost vector $c$, and given value $D$ and $N$.	
		\ENSURE The optimal vector $\tilde{w}$ and the cost $C(\tilde{w})$.
		\STATE  For each $e \in E$, determine the set  $L(e)$, 
		calculate  $S(e):= \vert L(e) \vert (u(e)-w(e))$, and let $\Delta D=D-w(T)$.
		\STATE  Sort the $S(e)$ in a non-increasing order by (\ref{eq-sort-Se}).
		\STATE  $S_N:=\sum_{i=1}^{N}S({e_{\beta_{i}}})$.
		\IF{$S_N < \Delta D$}
		\STATE  \textbf{return} ``The problem is infeasible" and $\tilde{w}=[\;], C(\tilde{w})= + \infty $.
		\ELSE
		\STATE  Sort $\nu (e):=\frac{ \vert L(e) \vert }{c(e)}, e \in E$ in a non-increasing order by (\ref{sequence_gamma}).

		\STATE  $flg:=1, \tau :=0, \bar{E}^\tau:=\left \{ e_{\beta_1}, e_{\beta_2},\dots,e_{\beta_N} \right \}$.
		\STATE  Let $S^\tau(e):=  \min\{S(e),\nu(e) C(\bar w^{\tau}) \}, e \in E .$
		\WHILE{$flg=1$}
		\STATE  $flg:=0$.
		\STATE  Let $u^{\tau}(e):=\left\{\begin{array}{lr}u(e),e\in \bar E^{\tau}\\ w(e), \text{otherwise} \end{array}\right.$. 
        \STATE  $[\bar{w}^\tau,C(\bar{w}^\tau)]$=MCSDIPT$_{\infty}(T,n,w,u^{\tau},c,D)$.
		 
		\STATE  $\bar{E}^\tau_{=}:=\left \{ e\in E \vert c(e)(\bar{w}^\tau(e)-w(e))=C(\bar{w}^\tau)\right \}$.
		\FOR{$e_i \; \in \;\bar{E}^\tau$  }
		\FOR{$e_j \; \in \; E\setminus\bar{E}^\tau$ in the order by (\ref{sequence_gamma})}
		\IF{$e_i\in \bar{E}_=^\tau$}
		\IF{$S^\tau(e_j)\ge S^\tau(e_i) \;  and \;\nu(e_i) < \nu (e_j)$}
		\STATE  $\bar{E}^{\tau}:=\bar{E}^{\tau}\setminus \left \{ e_i \right \} \cup \left \{ e_j \right \},flg:=1.$  
		\STATE  \textbf{break}
		\ENDIF
		\ELSE
		\IF{$S^\tau(e_j)>S^\tau(e_i)$}
		\STATE  $\bar{E}^{\tau}:=\bar{E}^{\tau}\setminus \left \{ e_i \right \} \cup \left \{ e_j \right \},flg:=1.$        
		\STATE  \textbf{break}
		\ENDIF
		\ENDIF
		\ENDFOR
		\ENDFOR
		\STATE  Update $\bar{E}^{\tau+1}:=\bar{E}^{\tau}$, $\tau:=\tau+1.$
		\ENDWHILE 
		\ENDIF
		\STATE  Update $\tau:=\tau-1.$
	\STATE  \textbf{return [$\tilde{w}:=\bar{w}^\tau,C(\tilde{w}):=C(\bar{w}^\tau)] $.	}
	\end{algorithmic}
\end{algorithm}

\begin{theorem}
	Problem \textbf{(MCSDIPTC$_\infty$)} can be solved by Algorithm \ref{MCSDIPTC_inf_alg} in $O(Nn^2)$ time.
\end{theorem}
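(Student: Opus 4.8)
The plan is to bound the running time by a line-by-line accounting of Algorithm \ref{MCSDIPTC_inf_alg}, separating the one-time preprocessing from the cost of a single pass of the \textbf{while} loop and then multiplying the latter by a bound on the number of passes. First I would handle the preprocessing: the leaf sets $L(e)$, and hence the quantities $S(e)$, $\nu(e)$ and $\Delta D$, can all be obtained from a single post-order traversal of $T$ in $O(n)$ time (each $|L(e)|$ is the number of leaves in the subtree below $e$); the non-increasing sortings of $\{S(e)\}$ in Line 2 and of $\{\nu(e)\}$ in Line 7 each cost $O(n\log n)$; and the feasibility test $S_N<\Delta D$ of Lines 3--5 costs $O(n)$. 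Thus everything outside the \textbf{while} loop is $O(n\log n)$, which will be dominated.

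Next I would bound the cost of one iteration of the \textbf{while} loop. Building $u^\tau$ (Line 12) and the set $\bar E^\tau_=$ (Line 14) each take $O(n)$; the call to Algorithm \ref{MCSDIPT_inf_alg} in Line 13 costs $O(n\log n)$ by Theorem \ref{thm-complexity-MCSDIPT-infty}; and the nested \textbf{for} loops of Lines 15--28 examine at most $N$ edges $e_i\in\bar E^\tau$ against at most $n-N$ edges $e_j\in E\setminus\bar E^\tau$, each comparison of $S^\tau$- and $\nu$-values being $O(1)$, for a total of $O(Nn)$. Hence one pass costs $O(Nn)$ for the swap search plus $O(n\log n)$ for the MCSDIPT$_\infty$ call, with the swap search dominating, so the whole loop costs $O(Nn)$ times the number of passes.

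The crux --- and the step I expect to be the main obstacle --- is therefore to prove that the \textbf{while} loop performs only $O(n)$ passes. Here I would argue from monotonicity: by Theorem \ref{MCSDIPTC_theorem_1}, every pass that performs a swap returns a feasible solution with $C(\bar w^{\tau+1})\le C(\bar w^{\tau})$, so the objective never increases, and combined with Lemma \ref{lemma-sum=D} (which pins $\sum_{e\in\bar E^\tau}S^\tau(e)=\Delta D$ throughout) each admissible swap is a \emph{strict} improvement with respect to the fixed orderings used in Lines 16--27. I would then show that these strict, order-respecting replacements cannot revisit a previous edge set, so that $\bar E^\tau$ stabilizes after $O(n)$ passes. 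The delicate point is ruling out an edge repeatedly leaving and re-entering $\bar E^\tau$, which I would resolve by exhibiting a potential --- for instance, the position in the sorted sequence (\ref{sequence_gamma}) of the edge swapped in --- and showing that each swap advances it monotonically, so that at most $O(n)$ swaps occur in total.

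Finally I would assemble the bounds: $O(n)$ passes, each $O(Nn)$ (with the $O(n\log n)$ MCSDIPT$_\infty$ call absorbed), gives $O(Nn^2)$, which dominates the $O(n\log n)$ preprocessing. Hence Algorithm \ref{MCSDIPTC_inf_alg} solves \textbf{(MCSDIPTC$_\infty$)} in $O(Nn^2)$ time, as claimed.
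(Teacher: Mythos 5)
Your overall accounting scheme (preprocessing, per-pass cost, number of passes) is structurally the same as the paper's, but you are more careful than the paper on the per-pass cost and, crucially, you make explicit a step the paper's proof silently omits: the paper sorts in $O(n\log n)$, charges $O(n\log n)$ for Line 13, counts the nested loops as ``$Nn$ iterations, each with $O(n)$ update work,'' and immediately concludes $O(Nn^2)$ --- it never states or proves any bound on how many times the \textbf{while} loop executes. You correctly identify that bound as the crux. Unfortunately, that is exactly where your proposal has a genuine gap: the $O(n)$-passes claim is only sketched, and the potential you suggest (the position in the ordering (\ref{sequence_gamma}) of the swapped-in edge) does not work as stated. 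Case-1 swaps (for $e_i\in\bar E^\tau_=$) do advance the $\nu$-order, but case-2 swaps (for $e_i\notin\bar E^\tau_=$) are governed by comparisons of $S^\tau(e)=\min\{S(e),\nu(e)C(\bar w^\tau)\}$, not of $\nu(e)$; since $C(\bar w^\tau)$ changes from pass to pass (it is non-increasing by Theorem \ref{MCSDIPTC_theorem_1}), the cap $\nu(e)C(\bar w^\tau)$ rebinds and the relative $S^\tau$-order of two edges can flip between passes. Hence an edge swapped out in one pass can in principle satisfy the swap-in condition in a later pass, and your ``each swap advances the potential monotonically'' argument does not rule out such re-entry. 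Without a proven bound on the number of passes, the claimed $O(Nn^2)$ total does not follow.

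Two further points. First, even granting $O(n)$ passes, your own accounting gives $O(n)\cdot\bigl(O(Nn)+O(n\log n)\bigr)=O(Nn^2+n^2\log n)$ because Algorithm \ref{MCSDIPT_inf_alg} is called once per pass; this equals $O(Nn^2)$ only when $N=\Omega(\log n)$, so the $O(n\log n)$ term is not automatically ``absorbed'' as you assert. Second, in fairness to you: the paper's published proof is no stronger on the decisive point --- it simply never addresses the number of \textbf{while}-loop iterations, implicitly treating the swap search as a single $Nn\times O(n)$ sweep. So your proposal does not diverge from the paper's route; rather, it exposes (and then fails to fill) the hole that the paper's argument papers over. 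A complete proof would need either a genuine termination/counting lemma for the swap process (e.g., a potential that is provably monotone under both swap rules, uniformly in the changing $C(\bar w^\tau)$), or a restructured algorithm whose iteration count is bounded by construction.
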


\begin{proof}
	In Algorithm \ref{MCSDIPTC_inf_alg}, we sort $S(e)$ and $\nu(e)$ in Lines 2 and 7, which takes a total of $O(n\log n)$ time.
	In the loop from Line 10 to 31, Line 13 calculates the optimal solution $\bar{w}$ and $C(\bar{w})$ for
	the 
	problem \textbf{(MCSDIPT$_\infty$)} obtained by modifying only $N$ edges in $\bar{E}$, which requires $O(n\log n)$ time according to Theorem \ref{MCSDIPT_inf_time}.
	In the loop from Line 15 to 29, we iterate over the sets $\bar E ^\tau$ and $E \setminus \bar E ^\tau$, which takes $Nn$ iterations. In	each iteration we can update the selected edges in $O(n)$ time.
	Therefore, problem \textbf{(MCSDIPT$_\infty$)} can be solved by Algorithm \ref{MCSDIPTC_inf_alg} in $O(Nn^2)$ time.  \hb
\end{proof}

\section{Solve the problems (SDIPTC$_{BH}$) and (MCSDIPTC$_{BH}$) under weighted bottleneck Hamming distance }\label{sec3}

When the weighted bottleneck Hamming distance is applied to the upgrade cost, 
the problems (SDIPTC$_{BH}$) and
(MCSDIPTC$_{BH}$) 
can be formulated as the following models (\ref{SDIPTC_BH_m}) and (\ref{MCSDIPTC_BH}), respectively.

\begin{eqnarray}
	&\max \limits_{\tilde{w}} & \tilde{w}(T):= \sum_{t_k \in Y}{\tilde{w}}(P_k)\nonumber\\
	\textbf{(SDIPTC$_{BH}$)} & s.t. &  \max_{\ e\in E}c(e)H( \tilde{w}(e),w(e))\le K,\label{SDIPTC_BH_m} \\ 
	&&\sum_{e\in E}H(\tilde{w}(e),w(e))\le N, \nonumber\\ 
	&& w(e) \leq \tilde{w}(e) \leq u(e), \quad e \in E.\nonumber
\end{eqnarray}
\begin{eqnarray}
	&\min \limits_{\tilde{w}} & C(\tilde w):= \max_{ e\in E}c(e)H({\tilde{w}(e),w(e))} \nonumber \\
	\textbf {(MCSDIPTC$_{BH}$) } & s.t. & \sum_{t_k \in Y} {\tilde{w}}(P_k) \geq D,\label{MCSDIPTC_BH} \\
	&&\sum_{e\in E}H(\tilde{w}(e),w(e))\le N,\nonumber \\
	&&w(e) \leq \tilde{w}(e) \leq u(e), \quad e \in E\nonumber.
\end{eqnarray}

In this section, we first provide an algorithm with time complexity $O(n)$ for 
problem (\textbf{SDIPTC$_{BH}$}).
For the minimum cost problem  (\textbf{MCSDIPTC$_{BH}$}), 
we transform it into its sub-problem (\textbf{MCSDIPT$_{BH}$}) and provide an 
algorithm with time complexity $O(n\log n)$.

\subsection{An $O(n)$ time algorithm to solve the problem (\textbf{SDIPTC$_{BH}$})} 
Similarly, we first consider solving the sub-problem (\textbf{SDIPT$_{BH}$}) without the cardinality constraint, 
which can be formulated as follows.

\begin{eqnarray}
	&\max\limits_{\bar{w}}&\bar{w}(T):= \sum_{t_k \in Y}{\bar{w}}(P_k)\nonumber \\
	\textbf { (SDIPT$_{BH}$)} & s.t.& \max_{ e\in E}c(e)H({\bar{w}(e),w(e))} \leq K, \label{SDIPT_BH}\\
	&& w(e) \leq \bar{w}(e) \leq u(e), \quad e \in E.\nonumber
\end{eqnarray}

Based on the characteristic of bottleneck Hamming distance, in order to maximize the objective function, we consider updating the edges $e\in E$ that satisfy $c(e)\leq K$. Thus, we give the following optimality condition.

\begin{theorem}\label{SDIPT_BH_ans}
	The weight vector
	\begin{equation}\label{SDIPT_BH_w}
		\bar{w}(e) = 
		\begin{cases}
			u(e), & c(e)\leq K\\
			w(e), & c(e) > K 
		\end{cases}
	\end{equation}
	is an optimal solution to problem (\textbf{SDIPT$_{BH}$}).
\end{theorem}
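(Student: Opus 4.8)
The plan is to mirror the two-part structure used in the proof of Theorem~\ref{SDIPT_inf_ans}: first verify that the candidate $\bar w$ in (\ref{SDIPT_BH_w}) is feasible, then establish optimality by contradiction. The conceptual key is to read the bottleneck Hamming constraint $\max_{e\in E}c(e)H(\bar w(e),w(e))\le K$ as a purely combinatorial restriction. Since $H(\bar w(e),w(e))\in\{0,1\}$, an edge $e$ may be altered (i.e.\ have $\bar w(e)\ne w(e)$) only when $c(e)\le K$, while any edge with $c(e)>K$ must be left untouched. Thus the feasible region consists exactly of those weight vectors that modify edges only within $\{e:c(e)\le K\}$, and on such edges the weight ranges freely over $[w(e),u(e)]$; since the objective is increasing in each coordinate, pushing every modifiable edge to its upper bound $u(e)$ is the natural candidate.

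For feasibility I would check the two constraints directly. The bound $w(e)\le\bar w(e)\le u(e)$ holds trivially in both branches of (\ref{SDIPT_BH_w}) because $u(e)\ge w(e)$. For the cost constraint, note that when $c(e)>K$ we have $\bar w(e)=w(e)$, so $H=0$; and when $c(e)\le K$ we have $c(e)H(\bar w(e),w(e))\le c(e)\le K$. Taking the maximum over all $e$ therefore stays bounded by $K$.

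For optimality I would argue by contradiction. Writing the objective in additive form, $\bar w(T)=\sum_{e\in E}|L(e)|\,\bar w(e)$, suppose some feasible $w'$ satisfies $w'(T)>\bar w(T)$, that is $\sum_{e\in E}|L(e)|\,(w'(e)-\bar w(e))>0$. Then at least one edge $e_i$ must satisfy $|L(e_i)|\,(w'(e_i)-\bar w(e_i))>0$, forcing $w'(e_i)>\bar w(e_i)$. A short case split finishes the argument: if $c(e_i)\le K$ then $\bar w(e_i)=u(e_i)$, so $w'(e_i)>u(e_i)$ violates the upper bound; if $c(e_i)>K$ then $\bar w(e_i)=w(e_i)$, so $w'(e_i)>w(e_i)$ means $H(w'(e_i),w(e_i))=1$ and hence $c(e_i)H=c(e_i)>K$ violates the cost constraint. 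Either branch contradicts the feasibility of $w'$.

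I do not expect a genuine obstacle here; the only point requiring a little care is the justification that some coordinate strictly increases, which follows cleanly from the additive form of the SRD together with the observation that $|L(e)|\ge 1$ for every edge of a rooted tree, since each subtree contains at least one leaf. The argument is essentially identical in spirit to Theorem~\ref{SDIPT_inf_ans}, the only difference being that the per-edge restriction is now the binary ``modify versus do not modify'' condition imposed by the bottleneck Hamming distance rather than a continuous per-edge budget.
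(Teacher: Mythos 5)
Your proposal is correct and follows essentially the same route as the paper's own proof: verify feasibility directly, then derive a contradiction from a feasible $w'$ with $w'(T)>\bar w(T)$ by locating an edge $e_i$ with $w'(e_i)>\bar w(e_i)$ and splitting on $c(e_i)\leq K$ versus $c(e_i)>K$. Your write-up is merely more explicit than the paper's (spelling out the additive form $\sum_{e\in E}|L(e)|\,\bar w(e)$ and the feasibility check that the paper dismisses as ``clear''), which is a fair improvement in rigor but not a different argument.
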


\begin{proof}
	Clearly, $\bar w$ is a feasible solution to problem (\textbf{SDIPT$_{BH}$}).
	Suppose that $\bar w$ is not the optimal solution, but $w'$ is. 
	Then, there must be at least one edge $ e_{i} \in E $ such that $ w'(e_i)> {\bar{w}(e_{i}) } $.
	If $c(e_i) \leq K$, we have $w'(e_i)\leq u(e)=\bar{w}(e_i)$,
	which contradicts $ w'(e_i)> {\bar{w}(e_{i}) } $. If $c(e_i) > K$, 
	the cost should be at least $c(e_i)$, which is greater than the cost
	limit $K$, contradicting the assumption. Therefore, the $\bar w$ defined 
	in (\ref{SDIPT_BH_w}) is the optimal solution to problem (\textbf{SDIPT$_{BH}$}). \hb
\end{proof}

Based on Theorem \ref{SDIPT_BH_ans}, for any given $T,n,w,u,c,K$,
we can obtain an optimal solution $\bar{w}$ and its corresponding optimal value $\bar{w}(T)$ for
the problem (\textbf{SDIPT$_{BH}$}). For convenience, we denote this subroutine as:
\begin{eqnarray} [ \bar{w},\bar{w}(T) ]=SDIPT_{BH}(T,n,w,u,c,K) \label{SDIPT_BH_alg}\end{eqnarray}

Obviously, this problem only requires one traversal of $O(n)$, so we have the following conclusion.

\begin{theorem}
	The subroutine (\ref{SDIPT_BH_alg}) can solve problem (\textbf{SDIPT$_{BH}$}) in $O(n)$ time.
\end{theorem}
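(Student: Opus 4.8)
The plan is to verify that the closed-form optimal solution given in Theorem~\ref{SDIPT_BH_ans} can be both constructed and evaluated with a single pass over the tree. By that theorem, the subroutine need only realize the assignment~(\ref{SDIPT_BH_w}): for each edge it sets $\bar w(e)=u(e)$ when $c(e)\le K$ and $\bar w(e)=w(e)$ otherwise. Thus the first step is to iterate once over the $n$ edges, and for each perform the single comparison $c(e)\le K$ followed by a constant-time assignment of either $u(e)$ or $w(e)$; this stage costs $O(1)$ per edge and $O(n)$ in total.

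The remaining task is to produce the optimal value $\bar w(T)=\sum_{t_k\in Y}\bar w(P_k)$ within the same budget. Here I would rewrite the objective in edge form, $\bar w(T)=\sum_{e\in E}|L(e)|\,\bar w(e)$, so that it suffices to know the leaf-count $|L(e)|$ for every edge. These counts equal the number of leaves in the subtree hanging below $e$, and can all be obtained by one bottom-up (postorder) traversal of $T$ in $O(n)$ time; with the $|L(e)|$ values in hand, the weighted sum is another $O(n)$ accumulation. Combining the two stages yields the claimed $O(n)$ bound.

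Since correctness of the returned vector is already guaranteed by Theorem~\ref{SDIPT_BH_ans}, the only thing to argue is the timing, and there is no real obstacle: every quantity the subroutine reports is computable by a constant number of linear scans of the tree. The sole point worth stating explicitly is that evaluating the objective does not require enumerating the root-leaf paths individually (which could be superlinear in the worst case), but can instead be folded into the same linear traversal via the edge-form identity above.
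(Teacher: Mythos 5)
Your proposal is correct and follows the same approach as the paper, which simply asserts that one $O(n)$ traversal applying the rule of Theorem~\ref{SDIPT_BH_ans} suffices. Your additional observation --- that the optimal value should be evaluated via the edge-form identity $\bar w(T)=\sum_{e\in E}\vert L(e)\vert\,\bar w(e)$ with the $\vert L(e)\vert$ obtained in one postorder pass, rather than by enumerating root-leaf paths (which can be superlinear) --- is a detail the paper leaves implicit, and it makes the timing claim airtight.
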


Then we consider the original problem (\textbf{SDIPTC$_{BH}$}). 
Similar to problem (\textbf{SDIPT$_{BH}$}), we can partition $E$ into two parts 
$\tilde E_\leq=  \left \{ e\in E\mid c(e)\leq K \right \} $ and $E \setminus \tilde E_\leq$.
Then, we select elements in $\tilde E_\leq$ according to the value of 
$S(e):= \vert L(e) \vert (u(e)-w(e))$. 
Depending on the relationship between $ \vert \tilde E_\leq \vert $ and the cardinality constraint $N$,
we divide the problem into the following two cases and solve them. 
The first case is rather simple.
\begin{lemma}
	When $ \vert \tilde E_\leq \vert \leq N$,the weight vector $\tilde{w}(e):=
	\begin{cases}
		u(e), & e \in \tilde E_\leq\\
		w(e),&e \in E \setminus \tilde E_\leq\\
	\end{cases}
	$ is an optimal solution to problem (\textbf{SDIPTC$_{BH}$}).
\end{lemma}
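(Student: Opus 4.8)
The plan is to establish feasibility first and then optimality by contradiction, closely mirroring the argument already used for the unconstrained sub-problem in Theorem~\ref{SDIPT_BH_ans}; the only new element is the cardinality constraint, and the hypothesis $|\tilde E_\leq| \le N$ is exactly what renders it inactive.

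First I would verify feasibility. Because $\tilde w$ differs from $w$ only on edges of $\tilde E_\leq$, and each such edge has $c(e) \le K$, we obtain $\max_{e \in E} c(e) H(\tilde w(e), w(e)) \le K$, so the bottleneck-cost constraint holds. The number of modified edges is exactly $|\tilde E_\leq|$, which by hypothesis is at most $N$, so the cardinality constraint is met. Finally $w(e) \le \tilde w(e) \le u(e)$ holds on both branches of the definition, so $\tilde w$ is feasible.

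Next I would prove optimality by contradiction. Suppose a feasible $w'$ satisfies $w'(T) > \tilde w(T)$. Using the standard identity $w'(T) - \tilde w(T) = \sum_{e \in E} |L(e)| (w'(e) - \tilde w(e))$, there must be at least one edge $e_i$ with $w'(e_i) > \tilde w(e_i)$. If $e_i \in \tilde E_\leq$ then $\tilde w(e_i) = u(e_i)$, so $w'(e_i) > u(e_i)$, contradicting the upper-bound constraint. If instead $e_i \in E \setminus \tilde E_\leq$ then $c(e_i) > K$ and $\tilde w(e_i) = w(e_i)$, so $w'(e_i) > w(e_i)$ forces $H(w'(e_i), w(e_i)) = 1$ and hence $c(e_i) H(w'(e_i), w(e_i)) = c(e_i) > K$, contradicting the bottleneck-cost constraint. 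Either way we reach a contradiction, so $\tilde w$ is optimal.

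I do not expect a genuine obstacle, since the argument is a direct adaptation of the unconstrained case. The only point requiring care is confirming that upgrading every edge of $\tilde E_\leq$ to its upper bound does not exceed the cardinality budget, which is precisely the content of the case hypothesis $|\tilde E_\leq| \le N$. When this hypothesis fails, namely in the complementary case $|\tilde E_\leq| > N$, one can no longer upgrade all eligible edges and a genuine selection among the edges of $\tilde E_\leq$ (ranked by $S(e) = |L(e)|(u(e)-w(e))$) becomes necessary; that situation is treated separately.
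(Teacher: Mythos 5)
Your proof is correct and matches the paper's intended argument: the paper states this lemma without proof (calling the case ``rather simple''), precisely because it follows by the same feasibility-check-plus-contradiction argument used for Theorem~\ref{SDIPT_BH_ans}, which is exactly what you reproduce. Note that your contradiction step never invokes the cardinality constraint on $w'$, so you are in effect showing that $\tilde{w}$ is optimal for the relaxation (\textbf{SDIPT$_{BH}$}) while being feasible for (\textbf{SDIPTC$_{BH}$}) --- the same reduction the paper's Algorithm~\ref{SDIPTC_BH_alg} uses when it calls the subroutine (\ref{SDIPT_BH_alg}) in this case.
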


When $ \vert \tilde E_\leq \vert >N$, we first prioritize the edges with larger 
$S(e)$ values in the set $\tilde E_\leq$.
We use the $Selection$ algorithm $\tilde{S}_{\leq}^{N}:=Selection(\tilde{S}_{\leq},N)$\cite{ref13} 
to select the $N$-th largest element $\tilde{S}_{\leq}^{N}$ from $\tilde{S}_{\leq}=\left \{S(e) \vert e \in \tilde E_\leq \right \}$.
Based on this, we divide the set $E$ into $\tilde{E}_N:=\left \{e \in \tilde E_\leq \vert  S(e)\ge \tilde{S}_{\leq}^{N} \right \}$ 
and $E \setminus \tilde{E}_N$. 
\begin{theorem} \label{SDIPTC_BH_ans}
	When $ \vert \tilde E_\leq \vert >N$, the weight vector 
	\begin{equation} \label{SDIPTC_BH_w}
		\tilde{w}(e):= 
		\begin{cases}
			u(e), & e \in  \tilde{E}_N\\
			w(e),&e \in E \setminus  \tilde{E}_N
		\end{cases}
	\end{equation}
	is an optimal solution to problem (\textbf{SDIPTC$_{BH}$}).
\end{theorem}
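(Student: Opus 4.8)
The plan is to argue in two stages, feasibility followed by optimality, mirroring the template of Theorem \ref{SDIPTC_inf_thm}. For feasibility I would observe that $\tilde w$ sets $\tilde w(e)=u(e)$ on $\tilde E_N$ and $\tilde w(e)=w(e)$ elsewhere, so the bound constraint $w(e)\leq \tilde w(e)\leq u(e)$ holds trivially. Since $\tilde E_N\subseteq \tilde E_\leq=\{e:c(e)\leq K\}$, every modified edge has $c(e)\leq K$, hence $\max_{e\in E}c(e)H(\tilde w(e),w(e))\leq K$ and the bottleneck constraint is satisfied. Because $|\tilde E_\leq|>N$, the Selection step isolates $N$ edges, so $\tilde E_N$ has cardinality $N$ and $\sum_{e\in E}H(\tilde w(e),w(e))=N\leq N$ holds.

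For optimality I would start from the decomposition $\tilde w(T)-w(T)=\sum_{e\in E}|L(e)|(\tilde w(e)-w(e))$, which follows from the definition of $L(e)$ and summing path weights over the leaves. The key observation is that any feasible $\hat w$ can only modify edges in a set $\hat E_N:=\{e:\hat w(e)\neq w(e)\}$ subject to two restrictions forced by the constraints: $\hat E_N\subseteq \tilde E_\leq$ (else a modified edge has $c(e)>K$, breaking the bottleneck bound) and $|\hat E_N|\leq N$ (cardinality). On each modified edge the increment is bounded by $|L(e)|(\hat w(e)-w(e))\leq |L(e)|(u(e)-w(e))=S(e)$, so
$$\hat w(T)-w(T)=\sum_{e\in \hat E_N}|L(e)|(\hat w(e)-w(e))\leq \sum_{e\in \hat E_N}S(e).$$
Since $\tilde E_N$ collects the $N$ largest values of $S(e)$ over $\tilde E_\leq$ and $S(e)\geq 0$, any admissible subset of $\tilde E_\leq$ of size at most $N$ has $S$-sum no larger than $\sum_{e\in \tilde E_N}S(e)$; hence $\hat w(T)-w(T)\leq \sum_{e\in \tilde E_N}S(e)=\tilde w(T)-w(T)$, which establishes optimality.

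As an alternative I could phrase optimality by contradiction in the exchange style of Theorem \ref{SDIPTC_inf_thm}: assume an optimal $\hat w$ with $\hat w(T)>\tilde w(T)$, locate an edge $e_k$ with $\hat w(e_k)>\tilde w(e_k)$, and split on whether $e_k\in \tilde E_N$ (forcing $\hat w(e_k)>u(e_k)$, a contradiction with the bounds) or $e_k\in E\setminus \tilde E_N$ (constructing a swap with some edge of $\tilde E_N$ that $\hat w$ leaves unmodified). I expect the direct bounding argument to be cleaner and would present it as the main proof.

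The step requiring the most care is justifying $\sum_{e\in \hat E_N}S(e)\leq \sum_{e\in \tilde E_N}S(e)$ for an arbitrary admissible modification set. This is exactly where the greedy choice of $\tilde E_N$ as the $N$ largest $S$-values within $\tilde E_\leq$, the nonnegativity $S(e)\geq 0$ (valid since $u(e)\geq w(e)$ and $|L(e)|\geq 0$), and the fact that the bottleneck constraint confines $\hat E_N$ to $\tilde E_\leq$ all combine: $\hat E_N$ cannot reach outside $\tilde E_\leq$ to acquire a larger $S$-value. Ties at the threshold $\tilde S_\leq^N$ cause no difficulty, since the claim is optimality rather than uniqueness and every admissible choice of $N$ top values yields the same objective.
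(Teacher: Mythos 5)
Your proof is correct, but it takes a genuinely different route from the paper's. The paper proves optimality by contradiction in exactly the exchange style you relegate to your alternative: it assumes an optimal $\hat{w}$ with $\hat{w}(T)>\tilde{w}(T)$, picks an edge $e_k$ with $\hat{w}(e_k)>\tilde{w}(e_k)$, and splits into three cases --- $e_k\in\tilde{E}_N$ (contradicts the upper bound), $e_k\notin\tilde{E}_\leq$ (contradicts the cost limit $K$), and $e_k\in\tilde{E}_\leq\setminus\tilde{E}_N$, the last handled by locating an edge $e_t\in\tilde{E}_N$ with $\tilde{w}(e_t)>\hat{w}(e_t)$ and swapping $e_k$ for $e_t$ to strictly improve $\hat{w}$. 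Your main argument is instead a direct global bound: the bottleneck constraint confines the modified set $\hat{E}_N$ of any feasible $\hat{w}$ to $\tilde{E}_\leq$, the cardinality constraint gives $\vert\hat{E}_N\vert\leq N$, each modified edge contributes at most $S(e)$, and the greedy choice of $\tilde{E}_N$ as the $N$ largest $S$-values in $\tilde{E}_\leq$ yields $\hat{w}(T)-w(T)\leq\sum_{e\in\hat{E}_N}S(e)\leq\sum_{e\in\tilde{E}_N}S(e)=\tilde{w}(T)-w(T)$. Your version is arguably cleaner: it avoids the existential step in the paper's third case (finding an unmodified $e_t\in\tilde{E}_N$, which implicitly uses $\vert\tilde{E}_N\vert=N$ and $\vert\hat{E}_N\vert\leq N$), and it sidesteps verifying that the swap strictly increases the objective, which in the paper is only immediate when $\hat{w}(e_t)=w(e_t)$ rather than merely $\hat{w}(e_t)<\tilde{w}(e_t)$. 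What the exchange argument buys in return is a local certificate of suboptimality that reuses the template of Theorem \ref{SDIPTC_inf_thm} across both norms. One caveat shared by both proofs: if ties occur at the threshold $\tilde{S}_{\leq}^{N}$, the set $\tilde{E}_N$ as literally defined may contain more than $N$ edges, in which case $\tilde{w}$ itself would violate the cardinality constraint; your closing remark treats ties only for optimality, so like the paper you are implicitly assuming the Selection step returns exactly $N$ edges.
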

\begin{proof}
	Obviously, $\tilde{w}$ is a feasible solution for problem (\textbf{SDIPTC$_{BH}$}).
	Suppose $\tilde{w}$ is not optimal, while $\hat{w}$ is. Then we have ${\hat{w}}(T)> {\tilde{w}}(T)$ and there exists at least one edge $e_k$ such that $\hat{w}(e_k)> \tilde{w} (e_k)$.
	
	If $e_k\in \tilde{E}_N$, we have $\hat{w}(e_k) > \tilde{w}(e_k)= u(e_k)$, which contradicts the upper bound constraint.
	
	If $e_k\in E\setminus \tilde{E}_N$ and $e_k \notin  \tilde E_\leq$, we have $c(e_k)>K$, which contradicts the updated cost limit.
	
	If $e_k\in E\setminus \tilde{E}_N$ and $e_k \in  \tilde E_\leq$, since $ \vert \tilde E_\leq \vert >N$, there must exist an edge $e_t \in \tilde{E}_N $ such that $\tilde{w}(e_t)>\hat{w}(e_t)$. Moreover, since $S(e)>S(e_k)$ for all $e \in \tilde{E}_N$, we have $S(e_t)>S(e_k)$. In this case, 
	we can construct a new edge weight vector as follows: 
	$$w'(e):= 
	\begin{cases}
		u(e), & e = e_t\\
		w(e),& e =e_k\\
		\hat{w}(e),&\text{otherwise} 
	\end{cases}$$
	and we have $\hat{w}(T)<w'(T)$, which contradicts the assumption. \hb	
\end{proof}

Based on Theorem \ref{SDIPTC_BH_ans}, we propose Algorithm \ref{SDIPTC_BH_alg} for problem (\textbf{SDIPTC$_{BH}$}).
\begin{algorithm}
	\caption{$\left [\tilde{w}, \tilde{w}(T )\right ]=$SDIPTC$_{BH}(T,n,w,u,c,K,N)$}
	\label{SDIPTC_BH_alg}
	\small
	\begin{algorithmic}[1]
		\REQUIRE A rooted tree $T(V,E)$, the number $n$ of edges, the weight vector $w$, an upper bound vector $u$, 
		a cost vector $c$, and given values $K$ and $N$.
		\ENSURE The optimal vector $\tilde{w}$ and the SRD $\tilde{w}(T)$.
		\STATE  $\tilde E_\leq:=\{e\in E\mid c(e)\leq K\}$
		\IF{$ \vert \tilde E_\leq \vert >N$}
		\STATE  $S(e):= \vert L(e) \vert (w'(e)-w(e)),\tilde{S}_{\leq}:=\left \{ S(e)  \vert  e \in \tilde E_\leq \right \}$.
		\STATE  $\tilde{S}_{\leq}^{N}:=Selection(\tilde{S}_{\leq}, N), \tilde{E}_N:=\left \{ e \in \tilde E_\leq   \vert  S(e) \ge \tilde{S}_{\leq}^{N}\right \}.$
		\STATE   Obtain $\tilde{w}$ by   (\ref{SDIPTC_BH_w}).
		\ELSE
		\STATE  $[ \tilde{w},\tilde{w}(T) ]=SDIPT_{BH}(T,n,w,u,c,K)$
		\ENDIF
		\STATE  \textbf{return} $[\tilde{w},\tilde{w}(T)] $.
	\end{algorithmic}
\end{algorithm}

Similar to the proof of Theorem  \ref{thm-complexity-SDIPTC-infty}, we have the following lemma.
\begin{lemma}
    Problem (\textbf{SDIPTC$_{BH}$}) can be solved by Algorithm \ref{SDIPTC_BH_alg} in $O(n)$ time.
\end{lemma}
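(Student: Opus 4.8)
The plan is to verify that every line of Algorithm~\ref{SDIPTC_BH_alg} executes in at most $O(n)$ time and that the algorithm performs no recursion, so that the linear bound follows by simply summing the per-line costs, exactly in the spirit of the argument for Theorem~\ref{thm-complexity-SDIPTC-infty}.

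First I would handle the preprocessing. Line~1 forms $\tilde E_\leq = \{e\in E\mid c(e)\le K\}$ by comparing each of the $n$ edge costs against $K$ once, which is $O(n)$, and the cardinality test in Line~2 is then immediate. The construction of $S(e)=|L(e)|(u(e)-w(e))$ in Line~3 is the step that deserves the most care, since a naive evaluation of each set $L(e)$ could cost $O(nr)$. The key observation is that $|L(e)|$ equals the number of leaves in the subtree hanging below $e$, so all the values $|L(e)|$ can be obtained together by a single post-order traversal of $T$ in $O(n)$ time; given these counts, forming every $S(e)$ and the list $\tilde S_\leq$ is one further pass.

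Next I would account for the selection and partitioning. In Line~4 the routine $Selection(\tilde S_\leq,N)$ returns the $N$-th largest element in linear time by the result cited from Thoms~\cite{ref13}, and comparing each $S(e)$ against the pivot $\tilde S_\leq^N$ to form $\tilde E_N$ is a single $O(n)$ scan. Constructing $\tilde w$ from~(\ref{SDIPTC_BH_w}) in Line~5 assigns one of two values to each edge, hence $O(n)$. In the complementary branch, Line~7 invokes the subroutine $SDIPT_{BH}$, which runs in $O(n)$ time as already established.

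Because exactly one of the two branches is taken and each of its constituent lines costs $O(n)$, with no nested loops over edges and no recursive calls, the total running time is $O(n)$, which proves the lemma. I expect the only genuine subtlety to be justifying the linear-time computation of the leaf counts $|L(e)|$ via the subtree-traversal identity; everything else reduces to single passes over $E$ together with the cited linear-time $Selection$ procedure.
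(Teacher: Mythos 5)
Your proposal is correct and follows essentially the same route as the paper, which proves this lemma by the same line-by-line cost accounting used for Theorem~\ref{thm-complexity-SDIPTC-infty}: each line of Algorithm~\ref{SDIPTC_BH_alg} is a single pass over $E$, the $Selection$ routine of \cite{ref13} is linear, and the fallback branch calls the $O(n)$ subroutine (\ref{SDIPT_BH_alg}). Your extra justification that all leaf counts $\vert L(e)\vert$ can be obtained in one $O(n)$ post-order traversal is a detail the paper leaves implicit, and it is a welcome addition rather than a deviation.
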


\subsection{An $O(n\log n)$ time algorithm to solve the problem (\textbf{MCSDIPTC$_{BH}$}) }
Concerning problem (\textbf{MCSDIPTC$_{BH}$}),
similarly, we first consider its sub-problem (\textbf{MCSDIPT$_{BH}$}) 
without cardinality constraint, which can be formulated as follows.

\begin{eqnarray}
	&\min\limits_{\bar{w}}& C(\bar{w}):=\max_{ e\in E}c(e)H({\bar{w}(e),w(e))} \nonumber \\
	\textbf {(MCSDIPT$_{BH}$) } & s.t.&   \sum_{t_k \in Y} {\bar{w}}(P_k) \geq D,\label{MCSDIPT_BH} \\
	&&w(e) \leq \bar{w}(e) \leq u(e), \quad e \in E\nonumber.
\end{eqnarray}

According to the relationship between $u(T)$ and $D$, as well as $w(T)$ and $D$, we discuss the following two cases of the problem separately.
\begin{lemma}
	When $u(T) < D$, problem (\textbf{MCSDIPT$_{BH}$}) is infeasible.
\end{lemma}

\begin{lemma}
If $w(T) \geq D$, then $w$ is an optimal solution to \textbf{(MCSDIPT$_{BH}$)}.
\end{lemma}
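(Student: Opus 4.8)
The plan is to exhibit the unmodified weight vector $w$ itself as an optimal solution by checking feasibility and then showing it attains the smallest achievable objective value. First I would verify feasibility of the candidate $\bar{w} = w$. The bound constraint $w(e) \leq \bar{w}(e) \leq u(e)$ holds trivially for every $e \in E$, since $\bar{w}(e) = w(e)$ and $u(e) \geq w(e)$ by assumption. The demand constraint is exactly where the hypothesis enters: we have
\begin{equation*}
	\sum_{t_k \in Y} \bar{w}(P_k) = \sum_{t_k \in Y} w(P_k) = w(T) \geq D,
\end{equation*}
so the SRD lower bound is met precisely because $w(T) \geq D$.

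Next I would evaluate the objective at $\bar{w} = w$. Since $\bar{w}(e) = w(e)$ for every edge, each Hamming distance vanishes, $H(\bar{w}(e), w(e)) = 0$, and therefore
\begin{equation*}
	C(w) = \max_{e \in E} c(e) H(w(e), w(e)) = 0.
\end{equation*}
Finally, I would argue optimality: because $c(e) > 0$ and $H(\cdot, \cdot) \in \{0, 1\} \geq 0$, every term $c(e) H(\bar{w}(e), w(e))$ is nonnegative, so $C(\bar{w}) \geq 0$ for any feasible $\bar{w}$. Thus $0$ is a lower bound on the optimal value, and since $w$ is feasible and attains it, $w$ is optimal.

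I do not anticipate any real obstacle here; the argument is immediate once the objective is recognized as a maximum of nonnegative quantities that is zeroed out by making no modification. The only point worth stating carefully is that feasibility of the ``do nothing'' solution relies entirely on the standing assumptions $u(e) \geq w(e)$ (for the upper bound) and $w(T) \geq D$ (for the demand), which is why this case is separated out as its own lemma.
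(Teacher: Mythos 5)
Your proof is correct and is exactly the argument the paper has in mind: the paper states this lemma without proof (treating it, like its $l_\infty$ analogue, as obvious), and the omitted reasoning is precisely your observation that $w$ is feasible when $w(T) \geq D$ and attains the objective value $0$, which is a trivial lower bound since $c(e) > 0$ and the Hamming distance is nonnegative. No gaps; nothing further is needed.
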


When $u(T) \geq D>w(T)$, we first sort the edges in an ascending order by their $c(e)$ 
values and renumber them in the following sequence $\left \{ e_{\alpha_i} \right \} $

\begin{eqnarray}
	c(e_{\alpha_{1}}) \leq c(e_{\alpha_{2}}) \leq \dots \leq c(e_{\alpha_{n}})\label{eq-ce}
\end{eqnarray}

Let $S(e)=\vert L(e) \vert  (u(e)-w(e))$, and we then construct an auxiliary function 
\begin{equation} \label{MCSDIPT_BH_auxg}
    g(k)=\sum_{i=1}^{k} S(e_{\alpha_i})  
\end{equation}
which represents the SDR added when updating the first $k$
edges in the sequence $\left \{ e_{\alpha_i} \right \}$. We use a binary search 
method to iteratively determine $k^*$ such that $g(k^*) < \Delta D \leq g(k^*+1)$ and give the following conclusion.

\begin{theorem}\label{MCSDIPT_BH_ans}
If $u(T) \geq D>w(T)$, then the weight vector $\bar{w}$ defined by:
	\begin{equation}\label{MCSDIPT_BH_w}
		\begin{array}{c}
			\bar{w}(e)=
			\begin{cases}
				u(e),&e \in \bar E_\leq\\
				w(e),&e \in E \backslash \bar E_\leq
			\end{cases}
		\end{array}
	\end{equation}
is an optimal solution to \textbf{(MCSDIPT$_{BH}$)}. Here, $\bar{E}_\leq = \{ e_{\alpha_i} \in E \mid i=1,2,\dots,k^*+1 \}$.
\end{theorem}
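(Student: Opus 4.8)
The plan is to prove feasibility first and then establish optimality through a cost-threshold argument, exploiting that under the bottleneck Hamming distance the objective $C(\bar w)=\max_{e\in E}c(e)H(\bar w(e),w(e))$ is simply the largest cost $c(e)$ among the modified edges. First I would check feasibility. Every coordinate of $\bar w$ is set to either $u(e)$ or $w(e)$, so the bound constraint $w(e)\le\bar w(e)\le u(e)$ holds coordinatewise. For the distance constraint, exactly the edges in $\bar E_\leq=\{e_{\alpha_1},\dots,e_{\alpha_{k^*+1}}\}$ are raised to their upper bounds, so the induced SRD increment is $\sum_{e\in\bar E_\leq}|L(e)|(u(e)-w(e))=g(k^*+1)$. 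The defining property $g(k^*)<\Delta D\le g(k^*+1)$ of the critical index then gives $\bar w(T)=w(T)+g(k^*+1)\ge w(T)+\Delta D=D$, so $\bar w$ is feasible.

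Next I would pin down the objective value. Since the edges are indexed in nondecreasing order of $c$ and $\bar w$ modifies precisely $e_{\alpha_1},\dots,e_{\alpha_{k^*+1}}$, the bottleneck cost equals $C(\bar w)=\max_{e\in\bar E_\leq}c(e)=c(e_{\alpha_{k^*+1}})$. For optimality I would argue by contradiction: suppose some feasible $w'$ achieves $C(w')<c(e_{\alpha_{k^*+1}})$. Because the bottleneck cost is the maximum $c(e)$ over all modified edges, every edge $e$ with $w'(e)\ne w(e)$ must satisfy $c(e)<c(e_{\alpha_{k^*+1}})$. Letting $E'$ denote this set of modified edges, the sorting forces $E'$ into the prefix of edges whose cost lies strictly below $c(e_{\alpha_{k^*+1}})$, whose largest index is at most $k^*$. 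Consequently, even pushing every edge of $E'$ all the way to its upper bound yields an SRD increment of at most $\sum_{e\in E'}S(e)\le g(k^*)<\Delta D$, whence $w'(T)\le w(T)+g(k^*)<D$, contradicting the feasibility of $w'$. Therefore no feasible solution can cost less than $c(e_{\alpha_{k^*+1}})$, and $\bar w$ attains this minimum, so $\bar w$ is optimal.

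The main obstacle I expect is the careful treatment of ties in the cost sequence $c(e_{\alpha_1})\le\dots\le c(e_{\alpha_n})$. When $c(e_{\alpha_{k^*}})=c(e_{\alpha_{k^*+1}})$, the prefix of edges strictly cheaper than $c(e_{\alpha_{k^*+1}})$ may end at an index $p<k^*$, so I would only claim the containment $E'\subseteq\{e_{\alpha_1},\dots,e_{\alpha_{k^*}}\}$ rather than equality, and then invoke the monotonicity of $g$, which holds because each $S(e)=|L(e)|(u(e)-w(e))\ge 0$, to conclude $\sum_{e\in E'}S(e)\le g(k^*)$. This monotonicity together with the cost-threshold characterization of affordable edges is the crux of the argument; once they are in place the remaining steps are routine bookkeeping.
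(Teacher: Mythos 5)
Your proof is correct and takes essentially the same route as the paper's: feasibility via $g(k^*+1)\ge\Delta D$, and optimality by arguing that any feasible solution with bottleneck cost below $c(e_{\alpha_{k^*+1}})$ can modify only a prefix of the sorted edge sequence, whose total achievable increment is at most $g(k^*)<\Delta D$, contradicting feasibility. If anything, your version is more careful than the paper's, which glosses over ties in the cost sequence and loosely labels the final contradiction as being with the ``optimality'' of $w'$ when it is really with its feasibility.
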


\begin{proof}
	It is obvious that $\bar w$ is feasible to the problem (\textbf{MCSDIPT$_{BH}$}).
	Suppose $\bar w$ is not optimal, but $w'$ is. Then $C(w')<C(\bar{w})$.
	Let $e_{\alpha_{j}}$ and $e_{\alpha_{i}}$ be the edges in the sequence $\left \{ e_{\alpha_k} \right \} $ with the maximum cost in $w'$ and $\bar{w}$ respectively.
	Then $c(e_{\alpha_j})=C(w')<C(\bar{w})=c(e_{\alpha_i})$, so $g(j)<g(i)$. Since
	$i$ is the smallest index satisfying $ g(i)\geq \Delta D $, thus $g(j)<\Delta D$, 
	which contradicts the optimality of $w'$. \hb
\end{proof}

Based on Theorem \ref{MCSDIPT_BH_ans}, we present the following Algorithm \ref{MCSDIPT_BH_alg} to solve the problem \textbf{(MCSDIPT$_{BH}$)}.
\begin{algorithm}[!htbp]
	\caption{$\left [\bar{w},C(\bar{w} ) \right ]:=$MCSDIPT$_{BH}(T,n,w,u,c,D)$}
	\label{MCSDIPT_BH_alg}
	\small
	\begin{algorithmic}[1]
		\REQUIRE A rooted tree $T(V,E)$, the number $n$ of edges, the weight vector $w$, an upper bound vector $u$, 
		a cost vector $c$, and a given value $D$.
		\ENSURE The optimal vector $\bar{w}$ and the cost  $C(\bar w)$.
		\STATE  For each $e \in E$, determine the set $L(e)$, calculate 
		$F(e):=c(e)(u(e)-w(e))$, and let $\Delta{D}:=D-w(T)$.
		\STATE  $u(T) := \sum_{t_k \in Y} u(P_k)$.
		\IF{${u}(T)<D$}
		\STATE  \textbf{return} ``The problem is infeasible" and $\bar{w}=[\;], C(\bar{w})= + \infty $.
		\ELSIF{$w(T)>=D$}
		\STATE  \textbf{return} [$w$,0].
		\ELSE
		\STATE  Sort the cost vector $c(e)$ in a non-decreasing order in (\ref{eq-ce}).
		
		\STATE  Set the initial values as $a:=1$, $b:=n$, and $k^*:=0$.
		\WHILE{$b-a>1$ and $k^*=0$}
		\STATE  $k := \left \lfloor \frac{a+b}{2} \right \rfloor$.
		\STATE  Calculate $g(k)$ and $g(k+1)$ using   (\ref{MCSDIPT_BH_auxg}).
		\IF{$g(k) < \Delta D \leq g(k+1)$ }
		\STATE  $k^* := k$.
		\ELSIF{$g(k+1) < \Delta D$}
		\STATE  $a := k$.
		\ELSE
		\STATE  $b := k$.
		\ENDIF
		\ENDWHILE
		
		\STATE  Compute $\bar w$ by  (\ref{MCSDIPT_BH_w}) and $C(\bar w) =c(e_{\alpha_{k^*+1}})$.
		\STATE  \textbf{return} $[\bar{w},C(\bar{w} )] $.
  \ENDIF
	\end{algorithmic}
\end{algorithm}

The following lemma follows from the time complexity of Theorem \ref{thm-complexity-MCSDIPT-infty}.
\begin{lemma}
    Problem (\textbf{MCSDIPT$_{BH}$}) can be solved by Algorithm \ref{MCSDIPT_BH_alg} in $O(n\log{n})$ time.
\end{lemma}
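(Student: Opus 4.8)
The plan is to verify the bound by a line-by-line accounting of the running time of Algorithm~\ref{MCSDIPT_BH_alg}, exactly parallel to the proof of Theorem~\ref{thm-complexity-MCSDIPT-infty}. Correctness of the output is not at issue here: Theorem~\ref{MCSDIPT_BH_ans} already guarantees that whenever $u(T)\ge D>w(T)$ the vector $\bar w$ built from the critical index $k^*$ satisfying $g(k^*)<\Delta D\le g(k^*+1)$ is optimal, and the two degenerate cases (infeasibility when $u(T)<D$, and $w$ itself being optimal when $w(T)\ge D$) are settled by the preceding lemmas. So what remains is purely to bound the work done in each phase.

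First I would observe that Line~1 computes all the sets $L(e)$ by a single postorder traversal of the tree, which yields every $F(e)$, every $S(e)=|L(e)|(u(e)-w(e))$, and the scalars $w(T)$ and $\Delta D$ in $O(n)$ total time; Line~2 forms $u(T)$ in $O(n)$, and the conditional tests in Lines~3--7 are then $O(1)$. The sort of the cost values $c(e)$ into the non-decreasing order (\ref{eq-ce}) in Line~8 costs $O(n\log n)$, and this is the term I expect to dominate the whole estimate.

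Next I would analyse the binary search in Lines~9--20, which repeatedly halves the interval $[a,b]\subseteq[1,n]$ and therefore runs for $O(\log n)$ iterations. The only step inside each iteration that is not $O(1)$ is the evaluation of $g(k)$ and $g(k+1)$ via (\ref{MCSDIPT_BH_auxg}). This is the one place deserving care: a naive recomputation of the prefix sum $g(k)=\sum_{i=1}^{k}S(e_{\alpha_i})$ costs $O(n)$, which already gives $O(n\log n)$ for the search as a whole; alternatively, precomputing the prefix sums of the sorted sequence $\{S(e_{\alpha_i})\}$ once in $O(n)$ immediately after the sort makes each query $O(1)$ and the search merely $O(\log n)$. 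Either route keeps the search within $O(n\log n)$.

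The main (and essentially only) obstacle is thus pinning down this per-iteration cost of evaluating $g$; once that is settled, summing the three phases gives $O(n)+O(n\log n)+O(n\log n)=O(n\log n)$, so Algorithm~\ref{MCSDIPT_BH_alg} solves (\textbf{MCSDIPT$_{BH}$}) in $O(n\log n)$ time. Since the overall structure mirrors that of Theorem~\ref{thm-complexity-MCSDIPT-infty} almost verbatim, I expect the argument to be short and to require no new ideas beyond the prefix-sum remark.
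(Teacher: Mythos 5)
Your proposal is correct and follows essentially the same route as the paper, which proves this lemma simply by appealing to the argument of Theorem~\ref{thm-complexity-MCSDIPT-infty}: the sort in Line~8 costs $O(n\log n)$, and the binary search performs $O(\log n)$ iterations with at most $O(n)$ work per iteration. Your additional remark that prefix sums of the sorted $S(e_{\alpha_i})$ values would make each evaluation of $g$ an $O(1)$ query is a nice refinement, but it is not needed for the stated bound and does not change the argument.
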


Now we can consider solving the cardinality problem (\textbf{MCSDIPTC$_{BH}$}).
First, we need to determine the infeasibility by Lemma \ref{lem-infea-MCSDIPTC} similarly.
\begin{lemma}
	When $S_N < \Delta D$, problem (\textbf{MCSDIPTC$_{BH}$}) is infeasible.
\end{lemma}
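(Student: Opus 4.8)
The plan is to observe that the feasibility of (\textbf{MCSDIPTC$_{BH}$}) is governed entirely by the cardinality constraint together with the box constraints $w(e)\le\tilde w(e)\le u(e)$, because the bottleneck Hamming distance appears only in the objective $C(\tilde w)$ and in none of the constraints. Consequently the infeasibility argument is structurally identical to the one used for (\textbf{MCSDIPTC$_\infty$}) in Lemma \ref{lem-infea-MCSDIPTC}, and I would transplant that reasoning almost verbatim.

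Concretely, I would take an arbitrary vector $\tilde w$ satisfying the cardinality and box constraints and set $E_N:=\{e\in E\mid \tilde w(e)\ne w(e)\}$, so that $|E_N|\le N$. Expanding the SRD increment edge by edge and using $\tilde w(e)\le u(e)$ on each modified edge, I would bound
$$\tilde w(T)-w(T)=\sum_{e\in E_N}|L(e)|(\tilde w(e)-w(e))\le\sum_{e\in E_N}S(e).$$

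The next step is the greedy majorization: since $|E_N|\le N$, the sum of at most $N$ of the values $S(e)$ cannot exceed the sum of the $N$ largest ones, which by the ordering in (\ref{eq-sort-Se}) is exactly $S_N$. Combined with the hypothesis $S_N<\Delta D=D-w(T)$, this forces $\tilde w(T)<D$, so $\tilde w$ violates the demand constraint $\sum_{t_k\in Y}\tilde w(P_k)\ge D$. As $\tilde w$ was arbitrary, the problem admits no feasible solution.

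I do not anticipate any genuine obstacle here; the one point worth stating explicitly is that $S_N$ is a norm-free upper bound on the attainable SRD increment under the cardinality constraint, so the choice of cost measure is immaterial to feasibility. Everything else is the same elementary majorization already established earlier, and the conclusion follows immediately.
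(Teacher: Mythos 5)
Your proposal is correct and takes essentially the same approach as the paper: the paper disposes of this lemma by pointing to the identical argument for (\textbf{MCSDIPTC$_\infty$}), namely that $S_N$ is the largest SRD increment attainable under the cardinality and box constraints, a bound in which the cost norm plays no role. Your write-up simply makes that majorization explicit rather than leaving it as ``clearly.''
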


When $S_N \geq \Delta D$, we first run Algorithm \ref{MCSDIPT_BH_alg}:
$$
[w',C(w')] = \text{MCSDIPT}_{BH}(T,n,w,u,c,D)
$$
to obtain the optimal solution to its sub-problem \textbf{(MCSDIPT$_{BH}$)}. This leads to the following lemma.

\begin{lemma}
	
	If $S_N \geq \Delta D$ and $ \sum_{e\in E}H(w'(e),w(e))  \leq N$, then $w'$ is the optimal solution to problem (\textbf{MCSDIPTC$_{BH}$}).
	
\end{lemma}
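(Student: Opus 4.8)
The plan is to treat the sub-problem (\textbf{MCSDIPT$_{BH}$}) as exactly the relaxation of (\textbf{MCSDIPTC$_{BH}$}) obtained by discarding the single cardinality constraint $\sum_{e\in E}H(\tilde{w}(e),w(e))\le N$. Comparing the two models (\ref{MCSDIPTC_BH}) and (\ref{MCSDIPT_BH}), they share the identical objective $C(\cdot)=\max_{e\in E}c(e)H(\cdot,w(e))$, the same SRD requirement $\sum_{t_k\in Y}\tilde{w}(P_k)\ge D$, and the same box constraints $w(e)\le\tilde{w}(e)\le u(e)$; the only difference is that (\textbf{MCSDIPT$_{BH}$}) omits the cardinality bound. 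Since deleting a constraint can only enlarge the feasible region, the feasible set of (\textbf{MCSDIPTC$_{BH}$}) is contained in that of (\textbf{MCSDIPT$_{BH}$}), so every solution feasible for the cardinality-constrained problem is also feasible for its sub-problem.

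First I would record the resulting lower-bound inequality on the optimal values. Because (\textbf{MCSDIPT$_{BH}$}) minimizes the same objective over a superset of the feasible region of (\textbf{MCSDIPTC$_{BH}$}), its optimal value $C(w')$ can be no larger than the optimal value of (\textbf{MCSDIPTC$_{BH}$}); equivalently, $C(w')$ is a valid lower bound for the harder problem. The standing hypothesis $S_N\ge\Delta D$ guarantees, via the preceding infeasibility lemma for (\textbf{MCSDIPTC$_{BH}$}), that (\textbf{MCSDIPTC$_{BH}$}) is actually feasible, so this comparison is between two genuine optima and not a vacuous statement.

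Next I would check that $w'$ is itself feasible for (\textbf{MCSDIPTC$_{BH}$}). By Theorem~\ref{MCSDIPT_BH_ans}, $w'$ already satisfies the SRD lower bound and the box constraints, so the only additional requirement imposed by (\textbf{MCSDIPTC$_{BH}$}) is the cardinality bound $\sum_{e\in E}H(\tilde{w}(e),w(e))\le N$ — and this is precisely the second hypothesis $\sum_{e\in E}H(w'(e),w(e))\le N$. Hence $w'$ is feasible for the cardinality-constrained problem. Combining the two observations closes the sandwich: $w'$ is feasible for (\textbf{MCSDIPTC$_{BH}$}) and attains the objective value $C(w')$, which was shown to be no larger than any feasible value of (\textbf{MCSDIPTC$_{BH}$}); therefore $C(w')$ is the optimal value and $w'$ is an optimal solution. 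I do not anticipate any real obstacle here, as the whole argument is the standard principle that an optimal solution of a relaxation which happens to satisfy the dropped constraint is optimal for the original; the only point requiring a moment of care is verifying that (\textbf{MCSDIPT$_{BH}$}) is genuinely a relaxation (same objective, feasible region a superset), which the side-by-side models make transparent.
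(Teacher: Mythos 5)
Your proposal is correct and matches the paper's reasoning: the paper states this lemma without an explicit proof precisely because it regards it as the standard relaxation principle you spell out — (\textbf{MCSDIPT$_{BH}$}) is (\textbf{MCSDIPTC$_{BH}$}) with the cardinality constraint dropped, so its optimal value is a lower bound, and an optimizer of the relaxation that happens to satisfy the dropped constraint is optimal for the constrained problem. The only cosmetic point is that the infeasibility lemma you cite gives only the contrapositive direction (infeasibility when $S_N<\Delta D$), but your argument does not actually need it, since the feasibility of $w'$ for (\textbf{MCSDIPTC$_{BH}$}) is verified directly from the hypotheses.
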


If $ \sum_{e\in E}H(w'(e),w(e))  > N$, sort the edges $e \in E$ in a non-decreasing order  according to their $c(e)$ values by (\ref{eq-ce}).  For any $N \leq k\leq n$,  let $ E_{k}:=\left \{ e_{\alpha_1}, e_{\alpha_2},\dots, e_{\alpha _k}\right \} $,  $S_{E_k}:=\left \{ S(e) \vert e\in E_k \right \}$,
	call $S_k^{N}:=Selection(S_{E_k},N)$\cite{ref13}.
	Let $E_k^N:=\left \{e \in E_k  \vert S(e)\ge S_k^{N}\right \}$, then
 \begin{equation} h(k)=\sum_{e \in E_k^N } S(e) \label{eq-hk}\end{equation}
is the maximum SDR that can be increased by updating  $N$ edges among the first $k$ edges in the sequence $\left \{ e_{\alpha_i} \right \} $.

Next, we can use binary search for iteration to find the smallest $k^*$ such that $h(k^*)
\geq \Delta D$, and let the updated edge set at this point be $E_{k^*}^{N}$.

\begin{theorem}\label{MCSDIPTC_BH_ans}
	If $ \sum_{e\in E}H(w'(e),w(e))  > N$, the weight vector \begin{equation} \label{MCSDIPTC_BH_ans_w}
		\tilde{w}(e)=
		\begin{cases}
			u(e),& e \in E_{k^*}^{N}\\
			w(e),& e \in E\setminus E_{k^*}^{N}
		\end{cases}
	\end{equation} is an optimal solution to problem (\textbf{MCSDIPTC$_{BH}$}).
\end{theorem}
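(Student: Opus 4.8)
The plan is to prove optimality by a sandwich argument on the bottleneck cost: I will show that the constructed $\tilde w$ is feasible with cost exactly $c(e_{\alpha_{k^*}})$, and that no feasible solution can achieve a strictly smaller bottleneck cost. Feasibility is the routine part. The vector $\tilde w$ modifies precisely the $N$ edges of $E_{k^*}^N$ (breaking ties in the Selection step so that $|E_{k^*}^N|=N$), raising each to its upper bound, so both the cardinality and the bound constraints hold; and the induced SRD increment equals $\sum_{e\in E_{k^*}^N}|L(e)|(u(e)-w(e))=h(k^*)\geq \Delta D$, whence $\tilde w(T)\geq D$. Here I rely on the additive decomposition $\tilde w(T)-w(T)=\sum_{e\in E}|L(e)|(\tilde w(e)-w(e))$ used throughout the paper.

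First I would pin down the cost of $\tilde w$. The premise $\sum_{e\in E}H(w'(e),w(e))>N$ together with Theorem \ref{MCSDIPT_BH_ans} says that the unconstrained optimum $w'$ already updates more than $N$ edges, i.e. $g(N)=\sum_{i=1}^{N}S(e_{\alpha_i})<\Delta D$. Since $E_N^N=E_N$ gives $h(N)=g(N)$, this forces $k^*>N$, so $E_{k^*-1}$ contains at least $N$ edges, $h(k^*-1)$ is well defined, and $h(k^*-1)<\Delta D$ by the minimality of $k^*$. I would then observe that $e_{\alpha_{k^*}}\in E_{k^*}^N$: otherwise $E_{k^*}^N=E_{k^*-1}^N$ and $h(k^*)=h(k^*-1)<\Delta D$, contradicting $h(k^*)\geq\Delta D$. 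Because $e_{\alpha_{k^*}}$ carries the largest cost among $E_{k^*}$, this yields $C(\tilde w)=\max_{e\in E_{k^*}^N}c(e)=c(e_{\alpha_{k^*}})$.

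Next I would prove the matching lower bound. Let $\hat w$ be any feasible solution with $C(\hat w)<c(e_{\alpha_{k^*}})$. Every updated edge of $\hat w$ then has cost strictly below $c(e_{\alpha_{k^*}})$, and since $\{e_{\alpha_i}\}$ is sorted non-decreasingly in cost, all such edges lie in $E_{k^*-1}$. As $\hat w$ updates at most $N$ edges and each edge $e$ contributes at most $S(e)$ to the increment, the total increment is bounded by the sum of the $N$ largest $S$-values over a subset of $E_{k^*-1}$, hence by $h(k^*-1)<\Delta D$; this forces $\hat w(T)<D$, contradicting feasibility. Therefore every feasible solution has cost at least $c(e_{\alpha_{k^*}})=C(\tilde w)$, so $\tilde w$ is optimal.

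The main obstacle is the bookkeeping around ties in the cost values and the coupling between the minimality of $k^*$ and the structure of $E_{k^*}^N$. With equal costs, the set of edges strictly cheaper than $c(e_{\alpha_{k^*}})$ may be a proper subset of $E_{k^*-1}$, so I must argue via monotonicity of the top-$N$ $S$-sum under set inclusion that confining to these edges still caps the increment at $h(k^*-1)$; and if several edges tie at the Selection threshold $S_{k^*}^N$, I must ensure exactly $N$ edges are retained so the cardinality constraint holds with equality. The premise-driven fact $k^*>N$ is the linchpin: it is what makes $h(k^*-1)$ meaningful and what rules out any cheaper feasible solution.
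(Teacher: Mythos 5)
Your proof is correct and follows essentially the same route as the paper's: both argue that a feasible solution cheaper than $c(e_{\alpha_{k^*}})$ could only update edges lying in $E_{k^*-1}$, whose best possible $N$-edge increment $h(k^*-1)$ falls short of $\Delta D$ by the minimality of $k^*$, contradicting feasibility. Your write-up is in fact somewhat more careful than the paper's, since you explicitly verify that $k^*>N$ (so that $h(k^*-1)$ is well defined) and that $e_{\alpha_{k^*}}\in E_{k^*}^{N}$ (so that $C(\tilde{w})=c(e_{\alpha_{k^*}})$), two points the paper's proof uses only implicitly.
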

\begin{proof}
	It is obvious that $\tilde{w}$ is a feasible solution to problem 
	(\textbf{MCSDIPTC$_{BH}$}). 
	Assume that $\tilde{w}$ is not optimal,
	and $\hat{w}$ is an optimal solution with 
	$C(\hat{w}) < C(\tilde{w})$. Suppose that the $N$
	updated edges in $\hat{w}$ correspond to the 
	sequence $\left \{ e_{\alpha_{i}} \right \} $ satisfying $c(e_{\alpha_{p_1}}) \leq c(e_{\alpha_{p_2}}) \leq \cdots \leq c(e_{\alpha_{p_N}})$.
	It is clear that $h(p_N) \geq \Delta D$ and $p_N < k^*$ since $\hat w $ satisfies $c(e_{\alpha_{p_N}})=C(\hat{w}) < C(\tilde{w})=c(e_{\alpha_{k^*}})$. This contradicts the assumption that $k^*$ is the smallest value 
	satisfying $h(k^*) \geq \Delta D$. \hb
\end{proof}

We present the following Algorithm \ref{MCSDIPTC_BH_alg} for solving \textbf{(MCSDIPTC$_{BH}$)} based on Theorem \ref{MCSDIPTC_BH_ans}.

\begin{algorithm}
	\caption{$ \left [\tilde{w},C(\tilde{w} ) \right ]: =$MCSDIPTC$_{BH}(T,n,w,u,c,D,N)$}
	\label{MCSDIPTC_BH_alg}
	\small
	\begin{algorithmic}[1]
		\REQUIRE A rooted tree $T(V,E)$, the number $n$ of edges, the weight vector $w$, an upper bound vector $u$, 
		a cost vector $c$, and two given values $D$ and $N$.
		\ENSURE The optimal vector $\tilde{w}$ and the cost  $C(\tilde{w} )$.
		\STATE  For each $e$, determine the set $L(e)$, calculate $S(e):= \vert L(e) \vert (u(e)-w(e))$, and let $\Delta D:=D-w(T)$.
		\STATE  Sort $S(e)$ in a non-increasing order by (\ref{eq-sort-Se}).
		\STATE  $S_N:=\sum_{i=1}^{N}S({e_{\beta_{i}}})$.
		\IF{$S_N < \Delta D$}
		\STATE  \textbf{return} ``The problem is infeasible" and 
		$\tilde{w}=[\;], C(\tilde{w})= + \infty$.
		\ENDIF

		\STATE  $\left [w',C(w') \right ]:=$MCSDIPT$_{BH}(T,n,w,u,c,D).$
		\IF{$\sum_{e\in E}H(w'(e),w(e)) \leq N$}
		\STATE  \textbf{return} $[w',C(w')]$.
		\ELSE
		\STATE  Sort $c(e)$ in a non-increasing order by (\ref{eq-ce}).
		
		\STATE  $a:=N, b:=n,k^*:=0$
		\WHILE{$a<b \;and \; k^*=0$}
		\STATE  $k := \left \lfloor \frac{a+b}{2} \right \rfloor$.
		\STATE  Compute $E_k^{N}$,  $h(k)$ and $h(k-1)$ by (\ref{eq-hk}).
		\IF{$ h(k-1) <\Delta D \leq h(k) $}
		\STATE  $k^* := k, E_{k^*}^{N}:=E_k^{N}$.
		\ELSIF{$h(k)<\Delta D$}
		\STATE  $a := k$.
		\ELSE
		\STATE  $b := k$.
		\ENDIF
		\ENDWHILE
		\STATE  Compute $\tilde{w}$ by   (\ref{MCSDIPTC_BH_ans_w}) and $C(\tilde{w}) :=c(e_{p_{k^*}})$.
		\STATE  \textbf{return} {$[\tilde{w},C(\tilde{w} )]$.}
		\ENDIF
	\end{algorithmic}
\end{algorithm}

\begin{theorem}
	Problem (\textbf{MCSDIPTC$_{BH}$}) can be solved by Algorithm \ref{MCSDIPTC_BH_alg} in $O(n \log n)$ time.
\end{theorem}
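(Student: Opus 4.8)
The plan is to bound the running time of Algorithm \ref{MCSDIPTC_BH_alg} line by line and argue that the dominant contributions are all $O(n\log n)$. First I would account for the preprocessing in Lines 1--3: determining every $L(e)$ together with $w(T)$ can be carried out in a single bottom-up traversal of the tree in $O(n)$ time, so computing each $S(e)=|L(e)|(u(e)-w(e))$ and $\Delta D$ is then $O(n)$; the sort of $\{S(e)\}$ in Line 2 costs $O(n\log n)$; and forming $S_N$ in Line 3 is $O(n)$. The infeasibility test in Lines 4--6 is $O(1)$.

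Next I would handle the single call to MCSDIPT$_{BH}$ in Line 7, which by the lemma immediately preceding this theorem runs in $O(n\log n)$ time, and the cardinality check $\sum_{e\in E}H(w'(e),w(e))\le N$ in Lines 8--9, which needs one pass over the edges and is $O(n)$. The sort of $\{c(e)\}$ in Line 11 again costs $O(n\log n)$, and the initialization in Line 12 is $O(1)$.

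The crux of the analysis is the binary-search loop in Lines 13--23. Since $a,b\in\{N,\dots,n\}$ and the search interval is halved each pass, the loop performs $O(\log n)$ iterations. The essential observation is that each iteration evaluates $h(k)$ and $h(k-1)$ via (\ref{eq-hk}), which invokes $Selection(S_{E_k},N)$ on the set $S_{E_k}$ of size $|E_k|=k\le n$; because $E_k$ is a prefix of the $c$-sorted order already computed in Line 11, building $S_{E_k}$ is $O(k)$, and using the linear-time selection routine of \cite{ref13} the selection itself is $O(k)=O(n)$. After obtaining $S_k^N$, the set $E_k^N$ and the sum $h(k)=\sum_{e\in E_k^N}S(e)$ are recovered in one further $O(n)$ scan. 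Hence each iteration is $O(n)$ and the whole loop is $O(n\log n)$. The final reconstruction of $\tilde w$ and the cost $C(\tilde w)$ in Line 24 is $O(n)$. Summing all contributions gives the claimed $O(n\log n)$ bound.

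The step I expect to be the main obstacle is precisely the loop analysis: one must be careful not to overcount the per-iteration cost, recognising that the repeated $Selection$ call is linear rather than $O(n\log n)$ (no re-sorting is performed inside the loop, since the selection algorithm of \cite{ref13} runs in linear time on an unordered set), so that the $O(\log n)$ iterations combine to $O(n\log n)$ rather than $O(n\log^2 n)$. Everything else is a routine tallying of tree traversals, sorts, and single-pass scans.
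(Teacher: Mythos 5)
Your proposal is correct and follows essentially the same route as the paper's own proof: both charge $O(n\log n)$ to the sorting steps and then bound the binary-search loop by $O(\log n)$ iterations, each costing $O(n)$ thanks to the linear-time $Selection$ routine of \cite{ref13}. Your version is somewhat more thorough (it explicitly accounts for the preprocessing, the call to MCSDIPT$_{BH}$ in Line 7, and the final reconstruction, which the paper leaves implicit), but the key idea and the accounting are identical.
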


\begin{proof}
	
	In Algorithm \ref{MCSDIPTC_BH_alg}, we sort $S(e)$ and $c(e)$ in Lines 2 and 11, which
	takes $O(n \log n)$ time. 
	Furthermore, in the loop from Line 13 to 23, we use the $Selection(E_k^{N}, N)$ algorithm
	to calculate $h(k)$ 
	in Line 15, which takes $O(n)$ time \cite{ref13}, and the whole loop is searched
	using binary search, so it can be
	completed in $O(n \log n)$ time. Therefore, problem (\textbf{MCSDIPTC$_{BH}$}) 
	can be solved by Algorithm 
	\ref{MCSDIPTC_BH_alg} in $O(n \log n )$ time. \hb
\end{proof}

\section{Numerical Experiments}\label{sec4}

\subsection{An example to show the process of Algorithm \ref{MCSDIPTC_inf_alg} and \ref{MCSDIPTC_BH_alg}}
For the better understanding of Algorithm \ref{MCSDIPTC_inf_alg} and \ref{MCSDIPTC_BH_alg}, Example \ref{example_1} is given to show the detailed computing process.

\begin{example} \label{example_1}

As shown in Fig.\ref{example_1_fig} and \ref{example_2_fig}, let $V:=\left \{ s,v_1,\dots,v_{19} \right \}$, $E:=\left \{  e_1,\dots,e_{19}\right \}$, the corresponding $c,w,L,u$ are shown on edges with different colors. Now we have $w(T):=407$, $u(T):=939$. Suppose the given values are $D:=460$ and $N:=4$.
\end{example}

\subsubsection{Execution of Alg. \ref{MCSDIPTC_inf_alg}}

(1) By defining $S(e):=\vert L(e)\vert(u(e)-w(e))$, the values of $S(e)$ are sorted in a non-increasing order, resulting in $S_N:=296$. As $S_N>\Delta D:=53$, the problem is feasible and has a valid solution.

(2) The values of $\nu(e)$ are calculated and sorted in non-increasing order.

(3) The algorithm is initialized with $flg:=1$, $\tau:=0$, $\bar E^0:=\left \{ e_1,e_3,e_4,e_{13} \right \}$.

(4) The cycle process is presented in the following table \ref{table-Alg3}.

\begin{table}[htbp!]
\centering
\caption{Process of calling Algorithm \ref{MCSDIPTC_inf_alg}.}
\begin{tabular}{|c|c|c|c|}
\hline
$\tau $ & $C(\bar{w}^{\tau})$ & $\bar{E}^{\tau}$     & $\bar{w}(\bar{E}^{\tau}) $  \\ \hline
$0$    & $44.2580$        & $\{e_1,e_3,e_4,e_{13}\}$ & (9.3294,  11.9505,   8.6882,  13.6034) \\ \hline
$0$    & $44.0455$        & $\{e_1,e_2,e_4,e_{13}\}$ & (9.3182, 7.0000,  8.6705,  13.5909) \\ \hline
$0$    & $41.1018$        & $\{e_1,e_2,e_5,e_{13}\}$ &  (9.1633,  7.0000, 20.1102,  13.4178)\\ \hline
$0$    & $33.5776$        & $\{e_1,e_2,e_5,e_{6}\}$  &  (8.7672,  7.0000, 19.3578, 18.7155)\\ \hline
$1$    & $33.5776$        & $\{e_1,e_2,e_5,e_{6}\}$  & (8.7672,  7.0000, 19.3578, 18.7155)\\ \hline
\end{tabular}\label{table-Alg3}
\end{table}
Finally, we get the optimal value $33.5776$. It is worth noting that in the first cycle, we updated the edge set three times with different values $C(\bar w^\tau)$  on Table \ref{table-Alg3}.
Algorithm \ref{MCSDIPT_inf_alg} was called only once at the beginning. We show the upgraded weights of 4 edges $\{e_1,e_2,e_5,e_{6}\}$  on the right figure in Fig. \ref{example_1_fig}.

\begin{figure}[!htbp]
\centering
\includegraphics[width=1.0\linewidth]{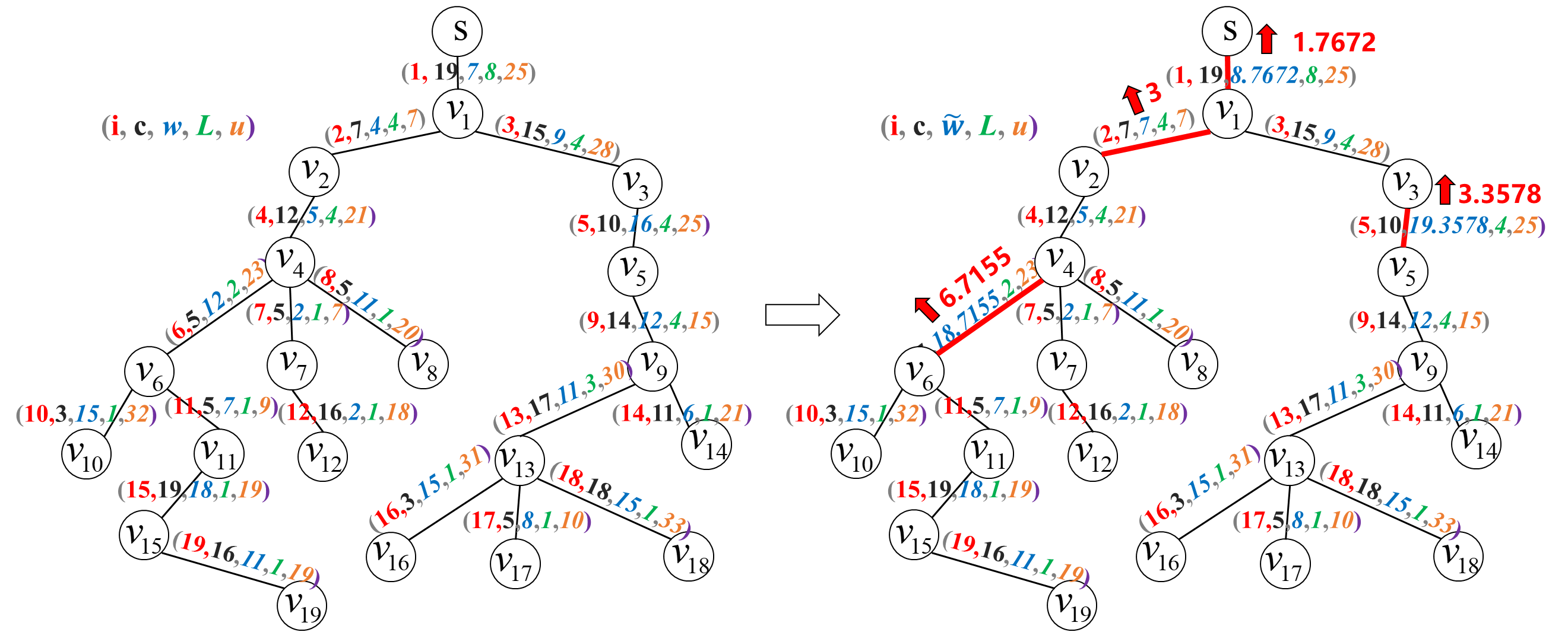}
\caption{The left figure shows the weights $(e_i,c_i,w_i,L_i,u_i)$, where $L_i:=\vert L(e_i) \vert$. The right figure shows the modified edges in Alg. \ref{MCSDIPTC_inf_alg}.}\label{example_1_fig}
\end{figure}

\subsubsection{Execution of Alg. \ref{MCSDIPTC_BH_alg}}

(1) Similarly, sort $S(e)$  in non-increasing order, resulting in $S_N:=296$. As $S_N>\Delta D:=53$, the problem is feasible and has a valid solution.

(2) $\left [w',C(w') \right ]:=$MCSDIPT$_{BH}(T,n,w,u,c,D)$, and we get $k^*:=2$, so 
$
\begin{array}{c}
			\bar{w}(e)=
			\begin{cases}
				u(e),&e \in \left \{e_6, e_{10},e_{16}  \right \} \\
				w(e),&else
			\end{cases}
\end{array}$  and $w'(T):=462>D:=460$.

(3) $\sum_{e \in E} H(w'(e),w(e)):=3 \leq 4$, we get $C(\tilde w)=C(w'):=5$.

\begin{figure}[!htbp]
\centering
\includegraphics[width=1.0\linewidth]{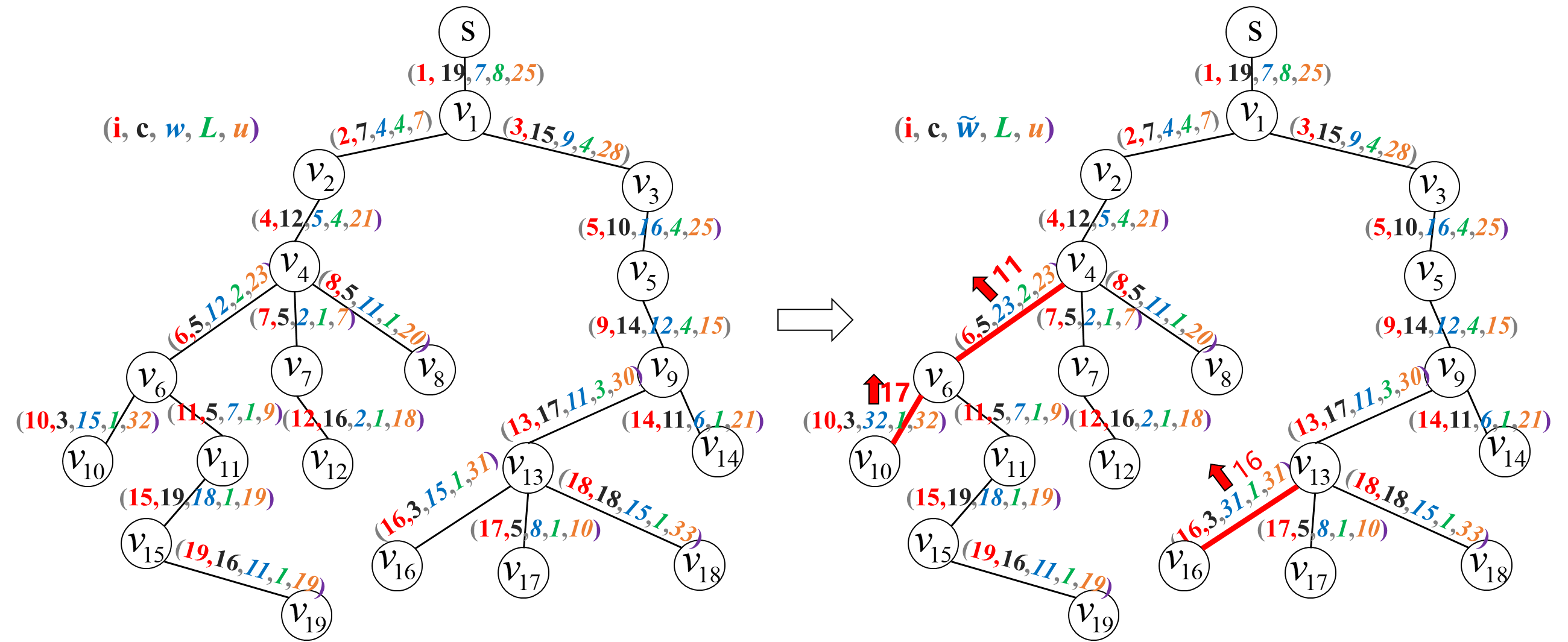}
\caption{The left figure shows the weights $(e_i,c_i,w_i,L_i,u_i)$. The right figure shows the modified edges in Alg. \ref{MCSDIPTC_BH_alg}.}\label{example_2_fig}
\end{figure}

\subsection{Computational experiments}

We present the numerical experimental results for formulas
(\ref{SDIPT_inf_alg}), (\ref{SDIPT_BH_alg}), and algorithms
\ref{SDIPTC_inf_alg}-\ref{MCSDIPTC_BH_alg} in Table \ref{table_1}.
These programs were coded in Matlab2021a and ran on an Intel(R) Core(TM) i7-10875H CPU @ 2.30GHz
and 2.30 GHz machine running Windows 11. We tested these algorithms
on six randomly generated trees with vertex numbers ranging from 1000 to 50000.
We randomly generated the vectors $u$, $c$ and $w$ such that $0\leq w
\leq u$ and $c > 0$.
We randomly generated $K$, $D$  and
$N$ for the problems (\textbf{SDIPT}), (\textbf{MCSDIPT}), (\textbf{SDIPTC}, \textbf{MCSDIPTC}), respectively.
For each randomly generated tree $T$, vectors $u$, $c$, $w$, and values $D$ and $K$, 
we tested formulas (\ref{SDIPT_inf_alg}), (\ref{SDIPT_BH_alg}), and algorithms
\ref{SDIPTC_inf_alg}-\ref{MCSDIPTC_BH_alg}, whose average, maximum and minimum CPU time are denoted by $T_i$, $T_i^{max}$ and $T_i^{min}$, respectively, where $i = {1, \cdots, 8}$. 

From Table \ref{table_1}, we can see that Algorithm \ref{MCSDIPTC_inf_alg} 
is the most time-consuming due to the $O(Nn^2)$ complexity of its program and the uncertainty of its iteration number.
Formulas (\ref{SDIPT_inf_alg}) and (\ref{SDIPT_BH_alg}) are relatively simple and require only one traversal of the vector, so they are less time-consuming.

Overall, these algorithms are all very effective and follow their respective time complexities well. When $n$ is small, the time differences among the three algorithms are relatively small, but as $n$ increases, the differences between the algorithms become more pronounced.

\begin{table}[!htbp] 
	\centering
	\caption{Performance of Algorithms}
	\label{table_1} 
	\begin{tabular}{ccccccc}\hline 
		
		\renewcommand{\arraystretch}{1.5pt}
		\renewcommand{\tabcolsep}{5pt}
		Complexity& $n$ &1000 &5000 &10000 &30000    &50000 \\ \toprule
		
		$O(n)$ & $T_1$        &0.0002 & 0.0003 &0.0005 &0.0013 &0.0019  \\
		&  $T_1^{max}$ & 0.0005 &0.0021 &0.0038 &0.0107 &0.0186 \\
		&  $T_1^{min}$ & 0.0001 &0.0001 & 0.0002 &0.0007 &0.0014 \\
		
		$O(n)$ & $T_2$ &0.0008 &0.0037 &0.0084 &0.0251 &0.0393 \\
		& $T_2^{max}$ &0.0022 &0.0079 &0.0095 &0.0274 &0.0428 \\
		& $T_2^{min}$ &0.0003 &0.0017 &0.0029 &0.0160 &0.0251 \\		
		
		$O(n \log n)$ & $T_3$ &0.0013 &0.0062 &0.0140 &0.0679 &0.1477 \\
		& $T_3^{max}$ &0.0075 &0.0122 &0.0217 &0.0805 &0.1802 \\
		& $T_3^{min}$ &0.0010 &0.0036 &0.0109 &0.0561 &0.1365 \\

		$O(Nn^2)$ & $T_4$ &0.0004 &0.0612&0.3575 &11.24 &47.66  \\
		& $T_4^{max}$ &0.0013 &0.0682 &0.4720 &24.36 &62.34 \\
		& $T_4^{min}$ &0.0002 &0.0446 &0.2654 &6.331 &40.98 \\

		$O(n)$ & $T_5$ &0.0007 &0.0037 &0.0081 &0.0246 &0.0403 \\
		& $T_5^{max}$ &0.0023 &0.0071 &0.0100 &0.0267 &0.0429 \\
		& $T_5^{min}$ &0.0004 &0.0031 &0.0069 &0.0221 &0.0387 \\
		
		$O(n)$ & $T_6$ &0.0012 &0.0057 &0.0114 &0.0320 &0.0715 \\
		& $T_6^{max}$ &0.0063 &0.0098 &0.0135 &0.0493 &0.0974 \\
		& $T_6^{min}$ &0.0007 &0.0042 &0.0096 &0.0262 &0.0638 \\		
		
		$O(n\log n)$ & $T_7$ &0.0016 &0.0067 &0.0150 &0.0683 &0.1578 \\
		& $T_7^{max}$ &0.0064 &0.0119 &0.0413 &0.0893 &0.1917 \\
		& $T_7^{min}$ &0.0008 &0.0052 &0.0120 &0.0602 &0.1424 \\
		
		$O(n\log n)$ & $T_8$ &0.0021 &0.0097 &0.0209 &0.0781 &0.1792 \\
		& $T_8^{max}$ &0.0077 &0.0141 &0.0425 &0.0914 &0.2024 \\
		& $T_8^{min}$ &0.0012 &0.0085 &0.0187 &0.0774 &0.1623 \\
		\hline
	\end{tabular}
\end{table}

\section{Conclusion and further research}\label{sec5}
This paper proposes two linear time greedy algorithms for problems (\textbf{SDIPT$_ \infty$}) under weighted
    $ l_\infty$ norm and the problem (\textbf{SDIPT$_{BH}$}) under bottleneck Hamming distance, respectively. 
     For their relevant minimum cost problems (\textbf{MCSDIPT$_ \infty$})  and (\textbf{MCSDIPT$_{BH}$}), 
    two $O(n \log n)$ time algorithms  are proposed based on  binary methods, respectively.
     In addition, this paper considers the sum of root-leaf distance interdiction problem with cardinality constraints  by upgrading edges on trees(\textbf{SDIPTC}) and its relevant minimum cost problem (\textbf{MCSDIPTC}). 
    Then two binary search algorithms are proposed within $O(n \log n)$ time for the problems (\textbf{SDIPTC})  
    under weighted $l_\infty$ norm and  weighted bottleneck Hamming distance, respectively. 
    For problems (\textbf{MCSDIPTC}), two binary search algorithms within $O(Nn^2)$ and $O(n \log n)$ under weighted $l_\infty$ norm and  weighted bottleneck Hamming distance are proposed,  respectively.

We validate the effectiveness of the proposed algorithms through numerical experiments. As future work, we aim to extend our approach to interdiction problems on source-sink path length, maximum flow, and minimum spanning tree under different measurements for general graphs.

\vskip 0.5cm
{\small \textbf{Funding} The Funding was provided by National Natural Science Foundation of China (grant no: 11471073).

\vskip 0.3cm
\textbf{Data availability}
 Data sharing is not applicable to this article as our datasets were generated randomly.

\section*{Declarations}

\textbf{Competing interests} The authors declare that they have no competing interest.

}

%
%



\end{document}